\theoremstyle{plain} 
\newtheorem{thm}{Theorem}[section] 
\newtheorem{cor}[thm]{Corollary} 
\newtheorem{lem}[thm]{Lemma} 
\newtheorem{prop}[thm]{Proposition} 
\theoremstyle{definition}
\newtheorem{definition}[thm]{Definition} 
\theoremstyle{remark}
\newtheorem{example}[thm]{Example} 
\newtheoremstyle{rem}
  {10pt}   
  {10pt}   
  {\normalfont}  
  {}       
  {\bfseries}   
  {.}      
  {3mm}  
  {\thmname{#1}\thmnumber{ #2}}  
\theoremstyle{rem}
\newtheorem{rem}[thm]{\textsl{Remark}} 
\newtheoremstyle{ser}
{8pt}
{8pt}
{\sl}
{}
{\rm\bfseries}
{ :}
{5mm}
{}
\newtheoremstyle{serr}
{8pt}
{8pt}
{\normalfont}
{}
{\it}
{.}
{3mm}
{}
\theoremstyle{ser}
\theoremstyle{serr}
\theoremstyle{ser}
\theoremstyle{serr}
\theoremstyle{ser}
\numberwithin{equation}{section}
\newcommand{\fA}{\mathfrak A}     		\newcommand{\kA}{\mathscr{A}}
     \newcommand{\sB}{\mathcal B}		\newcommand{\kB}{\mathscr{B}}
     \newcommand{\sF}{\mathcal F}		
     		\newcommand{\kH}{\mathscr{H}}
     		\newcommand{\kK}{\mathscr{K}}
     		\newcommand{\kL}{\mathscr{L}}
     		\newcommand{\kM}{\mathscr{M}}
     \newcommand{\sN}{\mathcal N}		\newcommand{\kN}{\mathscr{N}}
     \newcommand{\sO}{\mathcal O}		
\newcommand{\fP}{\mathfrak P}     \newcommand{\sP}{\mathcal P}		\newcommand{\kP}{\mathscr{P}}
     \newcommand{\sR}{\mathcal R}		\newcommand{\kR}{\mathscr{R}}
     \newcommand{\sV}{\mathcal V}
\def\XXint#1#2#3{{\setbox0=\hbox{$#1{#2#3}{\int}$ }
		\vcenter{\hbox{$#2#3$ }}\kern-.6\wd0}}
\newcommand{\R}{\mathbb{R}}
\newcommand{\C}{\mathbb{C}}
\newcommand{\N}{\mathbb{N}}
\newcommand{\F}{\mathbb{F}}
\newcommand{\ep}{\varepsilon}
\newcommand{\norm}[1]{\left| \left| #1 \right| \right|}
\newcommand{\ran}{\textnormal{\textrm{ran}}}
\newcommand{\dom}{\textnormal{\textrm{dom}}}
\newcommand{\spec}[1]{\textnormal{sp}\left( #1 \right)}
\newcommand{\m}{\mathfrak{m}}
\newcommand{\mlim}{\mathfrak{m}\text{-}\lim}
\newcommand{\msum}{\mathfrak{m}\text{-}\!\sum}
\newcommand{\Title}{On the Jordan-Chevalley-Dunford decomposition of operators in type $I$ Murray-von Neumann algebras}%
\newcommand{\ShortTitle}{Unboundedness of the Jordan-Chevalley decomposition}
\newcommand{\raisemath}[1]{\mathpalette{\raisem@th{#1}}}
\newcommand{\raisem@th}[3]{\raisebox{#1}{$#2#3$}}
\newcommand{\AuthorOne}{Soumyashant Nayak}%
\newcommand{\AuthorOneAddr}{%
Statistics and Mathematics Unit, Indian Statistical Institute, 8th Mile, Mysore Road, RVCE Post, Bengaluru, Karnataka - 560 059, India
}%
\newcommand{\AuthorOneEmail}{%
soumyashant@isibang.ac.in
}%
\newcommand{\AuthorTwo}{Renu Shekhawat}%
\newcommand{\AuthorTwoAddr}{%
Statistics and Mathematics Unit, Indian Statistical Institute, 8th Mile, Mysore Road, RVCE Post, Bengaluru, Karnataka - 560 059, India
}%
\newcommand{\AuthorTwoEmail}{%
rs\_math1904@isibang.ac.in
}%
\newcommand{\Keywords}{Jordan-Chevalley decomposition, Dunford decomposition, Murray-von Neumann algebras, Affiliated operators}%
\newcommand{\SubjectClassification}{47L60, 15A21, 46L10, 47A08}%
\begin{document}


\title[\MakeUppercase\ShortTitle]{\MakeUppercase \Title}
\author{\AuthorOne}%
\address[\AuthorOne]{\AuthorOneAddr}%
\email{\href{mailto:\AuthorOneEmail}{\AuthorOneEmail}}%

\author{\AuthorTwo}%
\address[\AuthorTwo]{\AuthorTwoAddr}%
\email{\href{mailto:\AuthorTwoEmail}{\AuthorTwoEmail}}%

\date{}%
\keywords{\Keywords}%
\subjclass[2020]{\SubjectClassification}%

\maketitle

\begin{abstract}
We show that, for $n \ge 3$, the mapping on $M_n(\C)$ which sends a matrix to its diagonalizable part in its Jordan-Chevalley decomposition, is {\bf norm-unbounded} on any neighbourhood of the zero matrix. Let $X$ be a Stonean space, and $\sN(X)$ denote the $*$-algebra of (unbounded) normal functions on $X$, containing $C(X)$ as a $*$-subalgebra. We show that every element of $M_n\big(\sN(X)\big)$ has a unique Jordan-Chevalley decomposition. Furthermore, when $n \ge 3$ and $X$ has infinitely many points, using the unboundedness of the Jordan-Chevalley decomposition, we show that there is an element of $M_n\big(C(X)\big)$ whose diagonalizable and nilpotent parts are {\bf not bounded}, that is, do not lie in $M_n\big(C(X)\big)$. Using these results in the context of a type $I$ finite von Neumann algebra $\kN$, we prove a canonical Jordan-Chevalley-Dunford decomposition for densely-defined closed operators affiliated with $\kN$, expressing each such operator as the strong-sum of a unique commuting pair consisting of (what we call) a $\mathfrak{u}$-scalar-type affiliated operator and an $\mathfrak{m}$-quasinilpotent affiliated operator. The functorial nature of Murray-von Neumann algebras, coupled with the above observations, indicates that considering unbounded affiliated operators is both necessary and natural in the quest for a Jordan-Chevalley-Dunford decomposition for bounded operators in type $II_1$ von Neumann algebras. 
\end{abstract}

\tableofcontents


\section{Introduction}

For a finite-dimensional complex vector space $\mathscr{V}$, every operator $A : \mathscr{V} \to \mathscr{V}$ may be uniquely decomposed as $A = D + N$, such that $D : \mathscr{V} \to \mathscr{V}$ is diagonalizable, that is, has a basis of eigenvectors, $N : \mathscr{V} \to \mathscr{V}$ is nilpotent, and $DN = ND$. This decomposition result, known as the Jordan-Chevalley decomposition (see \cite[Chapter 7]{hoffman_kunze}), is a fundamental tool in linear algebra that allows us to understand the action of a linear operator on $\sV$ by breaking it down into two simpler commuting actions, a multiplier action via $D$, and a shift action via $N$.

It is only natural to wonder whether the structural insights offered by the Jordan-Chevalley decomposition for finite-dimensional operators extend with similar elegance to operators on infinite-dimensional Hilbert spaces. In \cite{spectral_operators}, Dunford identified a class of operators on a Banach space for which such a description is possible, the so-called spectral operators. An example due to Kakutani (see \cite[\S XV.2]{dunford_schwartz_III}), makes it clear that not all Hilbert space operators are spectral. Given that operators in type $II_1$ von Neumann factors (especially the hyperfinite $II_1$ factor) are often informally thought of as `continuous matrices', their spectrality, or lack thereof, is of great interest. Investigations in this direction have been pursued by Dykema and Krishnaswamy-Usha in \cite{Angles_HS}, \cite{dykema_amudhan1}, \cite{dykema_amudhan2}; however a definitive picture is yet to emerge.

In this article, we take a conscious step back and carefully examine the Jordan-Chevalley decomposition in the setting of type $I_n$ $AW^*$-algebras (for $n \in \N$), before specializing to the type $I_n$ von Neumann algebra case and then piecing the results together for type $I$ finite von Neumann algebras. Note that every type $I_n$ $AW^*$-algebra is algebraically of the form $M_n\big(C(X)\big)$ for a Stonean space $X$. In \cite{algebras_of_unbounded_functions_and_operators}, drawing on the works of Stone, Fell, Kelley (see \cite{Stone}, \cite{Kelley}), Kadison discusses complex-valued normal functions on a Stonean space with the intention of providing a function representation for affiliated operators (see Definition \ref{def:aff_op}) for abelian von Neumann algebras. The space of complex-valued normal functions on a Stonean space $X$ (though their domains are really open dense subsets of $X$) is denoted by $\sN(X)$. In \S \ref{subsec:vector_valued_normal_func}, for a locally compact Hausdorff space $Y$, we define the notion of $Y$-valued normal functions on a Stonean space, with primary interest in the case of $Y = M_n(\C)$. In Remark \ref{rmrk:norm_iso}, we identify $M_n(\C)$-valued normal functions on a Stonean space $X$ with matrices in $M_n\big(\sN(X)\big)$ in a natural manner. A key  result of this article is Theorem \ref{thm:main1}, which asserts the existence and uniqueness of the Jordan-Chevalley-Dunford decomposition for matrices in $M_n\big(\sN(X)\big)$.

For a finite von Neumann algebra $\kN$, we denote the Murray-von Neumann algebra (see Definition \ref{def:MvN}) of densely-defined closed affiliated operators for $\kN$ by $\textrm{Aff}(\kN)$. For $n \in \N$, let $\kM_n$ be a type $I_n$ von Neumann algebra acting on a Hilbert space $\kH$. Note that every type $I_n$ von Neumann algebra is of the form $M_n(\kA)$ for an abelian von Neumann algebra $\kA$, and $\kA$ is $*$-isomorphic to $C(X)$ for a (hyper-)Stonean space $X$ (see Remark \ref{rem:type_I_n}). Using \cite[Theorem 4.15]{nayak_MvN_alg}, we may identify $\mathrm{Aff}\big(M_n(\kA)\big)$ with $M_n\big(\mathrm{Aff}(\kA)\big)$ as unital ordered complex topological $*$-algebras, and using the remarks in \cite[\S3, \S4]{algebras_of_unbounded_functions_and_operators}, we may identify $\textrm{Aff}(\kA)$ with $\sN(X)$ as monotone-complete ordered $*$-algebras, in a natural manner; note that the order structure refers to the self-adjoint part of these $*$-algebras. Theorem \ref{thm:main1} immediately yields a version of the Jordan-Chevalley decomposition in the context of $\textrm{Aff}(\kM_n)$, which we record in Proposition \ref{prop:main2}-(i). An operator in a Murray-von Neumann algebra is said to be {\it $\mathfrak{u}$-scalar-type} if it can be transformed into an (unbounded) normal operator via an (unbounded) similarity transformation; note that the word `unbounded' is meant as `not necessarily bounded' rather than `not bounded'. In Proposition \ref{prop:main2}-(i), we observe that every operator in $\textrm{Aff}(\kM_n)$ may be uniquely decomposed as the strong-sum of a $\mathfrak{u}$-scalar-type operator and a nilpotent operator in $\textrm{Aff}(\kM_n)$ which commute (in strong-product) with each other. 

The passage from the type $I_n$ case (for $n \in \N$) to the setting of type $I$ finite von Neumann algebras via infinite direct sums introduces some subtleties. Specifically, while the property of being $\mathfrak{u}$-scalar-type is preserved under infinite direct sums, it is not guaranteed that nilpotency will be preserved (see Remark \ref{rem:directsum-affnilpotents}). The notion of $\mathfrak{m}$-quasinilpotence, which we introduce in this article, emerges as the appropriate substitute for nilpotence. An operator $A$ in a Murray-von Neumann algebra is termed $\mathfrak{m}$-quasinilpotent if its normalized power sequence, $\{ |A^k|^{\frac{1}{k}}\}_{k \in \N}$, converges to $0$ in the $\mathfrak{m}$-topology (see Definition \ref{def:m-top}). Let $\kM$ be a type $I$ finite von Neumann algebra. In Theorem \ref{thm:main2}, which is the main result of this article, we show that every operator in $\textrm{Aff}(\kM)$ may be uniquely decomposed as the strong-sum of a commuting pair consisting of a $\mathfrak{u}$-scalar-type operator and an $\mathfrak{m}$-quasinilpotent operator, which we call its {\it Jordan-Chevalley-Dunford decomposition}. Moreover, we see that the normalized power sequence of any operator in $\textrm{Aff}(\kM)$ converges in the $\mathfrak{m}$-topology.

Somewhat surprisingly, our discussion reveals that the natural home for a Jordan-Chevalley-Dunford decomposition of operators in $\kM$ is in $\textrm{Aff}(\kM)$. The germs of this phenomenon already manifest in the world of $3 \times 3$ complex matrices. In Theorem \ref{thm:main_S3}, we show that the mapping on $M_3(\C)$ which sends a matrix to its diagonalizable part, is norm-unbounded on the unit ball of $M_3(\C)$. Using Theorem \ref{thm:main_S3}, in Remark \ref{rmrk:counter_ex_I3} we note an example of an operator in the type $I_3$ von Neumann algebra, $M_3\big(\ell^{\infty}(\N)\big)$, whose diagonalizable and nilpotent parts {\bf do not} lie in $M_3\big(\ell^{\infty}(\N)\big)$.

In Proposition \ref{prop:I_n_embed_II_1}, we note that for every $n \in \N$ and a type $II_1$ von Neumann algebra $\kL$, there is a unital normal embedding of the type $I_n$ von Neumann algebra, $M_n\big(\ell^{\infty}(\N)\big)$, into $\kL$. Taking into account the preceding observations and the functorial nature of Murray-von Neumann algebras (as detailed in \cite[\S 4]{nayak_MvN_alg}, \cite[\S 4]{ghosh_nayak}),  we are strongly inclined to believe that any meaningful Jordan-Chevalley-Dunford decomposition for operators in type $II_1$ von Neumann algebras will fundamentally rely on affiliated operators (see Remark \ref{rmrk:essential_aff}).

It is worthwhile noting that our path to the proof of Theorem \ref{thm:main1} yields insights that may be more broadly applicable. An important class of examples of Stonean spaces is given by the maximal ideal space of $L^{\infty}(Y; \mu)$, where $Y$ is a locally compact Hausdorff space equipped with a Radon measure $\mu$. Thus in studying the Jordan-Chevalley decomposition of matrices in $M_n\big(L^{\infty}(Y; \mu)\big)$, one naturally expects challenges of a measure-theoretic nature, which may be translated into a topological language using Stonean spaces; for example, as is the case in the topological proof of the spectral theorem due to Stone (see \cite{Stone_spectral_thm}, \cite[\S 5.2]{KR-I}). In this vein, Lemma \ref{lem:collection_of_disj_open_sets} facilitates the `gluing' of combinatorial matrix properties (such as rank, multiplicity of eigenvalues, invariant factors, etc.) for continuously varying matrices parametrized by a Stonean space $X$. We anticipate that it will be generally applicable to transferring results about matrices in $M_n(\C)$ to the context of matrices in $M_n\big(C(X)\big)$ and $M_n\big(\sN(X)\big)$ and eventually to type $I$ finite von Neumann algebras. We also aspire for this work to offer a modest contribution towards a general principle for extending results from von Neumann factors to general von Neumann algebras, potentially providing a more user-friendly and accessible alternative to the established direct integral approach (see \cite[Chapter 14]{KR-II}).


\section{Preliminaries}

In this section, we document the notation used, and review some relevant concepts and results used throughout the article.

\subsection{Notations}
Throughout this section, $\sR$ denotes a unital commutative ring. We denote the set of all $m \times n$ matrices over $\sR$ by $M_{m,n}(\sR)$ ; when $m=n$, we denote it by $M_n(\sR)$. We denote the zero matrix in $M_{m,n}(\sR)$ by ${\bf 0}_{m,n}$ ; when $m=n$, we denote the zero matrix by ${\bf 0}_n$ and the identity matrix by $I_n$.

For $n \in \N$, $[n]$ denotes the set $\{1, \ldots, n\}$. For $i,j \in [n]$, the elementary matrix in $M_n(\sR)$ with $(i,j)^\mathrm{th}$ entry equal to the unity of $\sR$ and rest of the entries equal to the zero element of $\sR$, is denoted by $E_{ij}$. We note the following basic fact about the multiplication of elementary matrices: 
\begin{equation}
\label{eqn:mult_ele_mat}
E_{ij}E_{k \ell} = \delta_{jk} E_{i \ell} ~\textrm{ for } i,j,k,\ell \in [n].
\end{equation}
The set of all upper-triangular matrices in $M_n(\sR)$ is denoted by $UT_n(\sR)$, and the set of all invertible matrices in $M_n(\sR)$ is denoted by $GL_n(\sR)$. 
For our discussion, we are typically interested in the cases where $\sR$ is $\C$, the field of complex numbers, or $C(\Omega)$, the ring of continuous functions on a topological space $\Omega$, or $\sN(X)$, the ring of normal functions on a Stonean space $X$ (see Definition \ref{def:normal_func}).

For topological spaces $\Omega$ and $\Omega'$, we denote the set of all continuous functions from $\Omega$ to $\Omega'$ by $C(\Omega ;\Omega')$, and $C(\Omega ; \C)$ is abbreviated as $C(\Omega)$.
We denote the constant functions in $C(\Omega)$ with value $1$ and $0$ by ${\bf 1}$ and ${\bf0}$, respectively. 

For $A \in M_n(\C)$, we denote the spectrum of $A$ by $\spec A$, and $\|A\|$ denotes the norm of $A$ viewed as an operator acting on the Hilbert space $\C^n$, with the usual inner product. The principal diagonal of $A$, viewed as a vector in $\C^n$, is denoted by $\mathrm{dvec}(A)$. For a vector $\vec{v} \in \C^n$, $\mathrm{diag}(\vec{v})$ denotes the diagonal matrix in $M_n(\C)$ whose principal diagonal coincides with $\vec{v}$. 

For a function $f$, we denote the domain and the range of $f$ by $\dom(f)$ and $\ran(f)$, respectively; if $f$ is a linear map between vector spaces, the kernel of $f$ is denoted by $\ker(f)$.

Throughout the article, $\kH$ denotes a complex Hilbert space, and $\kB(\kH)$ denotes the set of all bounded linear operators on $\kH$. For a von Neumann algebra $\kR$, the identity operator and the zero operator in $\kR$ are denoted by $I_\kR$ and $0_\kR$, respectively. For a finite von Neumann algebra $\kN$ and an operator $A \in \textrm{Aff}(\kN)$ (see Definition \ref{def:aff_op}, Definition \ref{def:MvN}), the modulus of $A$ is defined by $|A| := (A^*A)^\frac{1}{2}$. The sequence $\{|A^k|^\frac{1}{k}\}_{k \in \N}$ is referred to as the {\it normalized power sequence} of $A$.

\begin{definition}
Two matrices $A, B \in M_n(\sR)$ are said to be {\it similar} in $M_n(\sR)$ if there is an invertible matrix $S \in GL_n(\sR)$ such that $B = SAS^{-1}$. 
It is straightforward to verify that {\it similarity} is an equivalence relation on $M_n(\sR)$. We refer to the corresponding equivalence classes as {\it similarity orbits}.
\end{definition}

\begin{definition}
We say that a matrix $A \in M_n(\sR)$ is {\it diagonalizable} in $M_n(\sR)$ if $A$ is similar to a diagonal matrix in $M_n(\sR)$. 
We say that a matrix $N$ is {\it nilpotent} if $N^k = {\bf0}_n$ for some $k \in \N$.

\end{definition}

\begin{definition}\label{def:JCD}
A matrix $A \in M_n(\sR)$ is said to have a {\it Jordan-Chevalley decomposition} in $M_n(\sR)$ if there is a commuting pair of matrices $D, N \in M_n(\sR)$ such that $D$ is diagonalizable in $M_n(\sR)$, $N$ is nilpotent, and $A = D+N$. The decomposition $A = D + N$ is said to be a Jordan-Chevalley decomposition of $A$.

Note that for $\sR = \C$, every matrix in $M_n(\C)$ has a unique Jordan-Chevalley decomposition (see \cite[Chapter 7]{hoffman_kunze}). In this context, we use the notation $D(A), N(A)$, respectively, for the diagonalizable, nilpotent parts, respectively, of $A \in M_n(\C)$. 
\end{definition}

In the lemma below, we note two elementary results from point-set topology.
\begin{lem}
\label{lem:top_sp_basic}
\textsl{
Let $\Omega$ be a topological space.
\begin{enumerate}
    \item[(i)] If $O_1, O_2$ are disjoint open subsets of $\Omega$, then ${O_1} \cap \overline{O_2} = \varnothing$.
    \item[(ii)] If $Q$ is a clopen subset of $\Omega$, then for every subset $S \subseteq \Omega$, 
    $$ {Q} \cap \overline{S} = \overline{Q \cap S}.$$
\end{enumerate}
}
\end{lem}
\begin{proof}
{(i)} Since $O_1 \cap O_2 = \varnothing$, $O_2$ is a subset of the closed set $\Omega \backslash O_1$ so that $\overline{O_2} \subseteq \Omega \backslash O_1$, that is, $O_1 \cap \overline{O_2} = \varnothing$.

\medskip

\noindent {(ii)} Note that $Q \cap \overline{S}$ is a closed set containing $Q \cap S$. Hence $\overline{Q \cap S} \subseteq Q \cap \overline{S}$. For the other direction, take $x \in Q \cap \overline{S}$. For every open neighborhood $O$ of $x$, $Q \cap O$ is an open neighborhood of $x$. Since $x \in \overline{S}$, we must have $(Q \cap S) \cap O  = (Q \cap O) \cap S \ne \varnothing$, so that $x \in \overline{Q \cap S}$. Thus, $Q \cap \overline{S} = \overline{Q \cap S}$.
\end{proof}

\subsection{Type $I_n$ $AW^*$-algebras}
An $AW^{*}$-algebra is an algebraic generalization of the notion of a von Neumann algebra, introduced by Kaplansky (see \cite{Kaplansky}).

\begin{definition}
A $C^*$-algebra $\fA$ is said to be an $AW^*$-algebra if the set of orthogonal projections in $\fA$ forms a complete lattice, and each maximal commutative $C^*$-subalgebra is monotone-complete (that is, every increasing net of self-adjoint elements which is bounded above, has a least upper bound).
\end{definition}

Commutative $AW^*$-algebras correspond to complete Boolean algebras, and are of the form $C(X)$ where $X$ is a Stonean space (see \cite[\S 2]{Kaplansky}, and Definition \ref{def:Stonea_sp}).

\begin{rem}\label{rmrk:cts_iso}
Every type $I_n$ $AW^*$-algebra is of the form $M_n\big(C(X)\big)$, where $X$ is a Stonean space. Since a matrix in $M_n \big( C(X) \big)$ may be viewed as a continuous $M_n(\C)$-valued function on $X$, there is a natural $*$-isomorphism between $M_n \big( C(X) \big)$ and $C \big( X ; M_n(\C) \big)$ allowing us to view them interchangeably.
\end{rem}

\begin{rem}\label{rem:type_I_n}
Every type $I_n$ von Neumann algebra with center $*$-isomorphic to an abelian von Neumann algebra $\mathscr{A}$ is of the form $M_n(\mathscr{A})$ (see \cite[Theorem 6.6.5]{KR-II}), and the maximal ideal space of $\mathscr{A}$ is extremally disconnected (see \cite[Theorem 5.2.1]{KR-I}). Thus, every type $I_n$ von Neumann algebra is of the form $M_n\big(C(X)\big)$ for some (hyper)-Stonean space $X$, and is in fact a type $I_n$ $AW^*$-algebra.
\end{rem}

\subsection{Spectral operators}\label{subsec:spectal_ops}

In this subsection, we recall a recently proved result about spectral operators in $\kB(\kH)$ (see Theorem \ref{thm:convergence_NPS}), which we will utilize in section \S $\ref{sec:JCD_decomposition}$. Our discussion primarily draws from \cite{dunford_schwartz_III}, \cite{Dunford_survey} and \cite{Yamamoto-II}.  Although the standard definitions of spectral and scalar-type operators involve idempotent-valued spectral measures (see \cite[\S 1]{Dunford_survey}), we present equivalent, more readily stated characterizations below. The equivalence of these characterizations is established in \cite[Chapter XV]{dunford_schwartz_III}.

\begin{definition}[Scalar-type operator]
\label{def:scalar-type-B(H)}
An operator $D \in \kB(\kH)$ is said to be {\it scalar-type} if there is a normal operator $T$ and an invertible operator $S$, in $\kB(\kH)$ such that $D = S^{-1}TS$.
\end{definition}

\begin{definition}[Spectral operator]
\label{def:spectral-B(H)}
An operator $A \in \kB(\kH)$ is said to be {\it spectral} if there is a scalar-type operator $D$ and a quasinilpotent operator $N$ in $\kB(\kH)$ such that $DN=ND$ and $A=D+N$.
\end{definition}

By \cite[Theorem XV.4.5]{dunford_schwartz_III}, the above decomposition of the spectral operator $A$ is unique, and is known as the {\it Dunford decomposition} of $A$.

\begin{thm}[{see \cite[Theorem 4.4-(i)]{Yamamoto-II}}]
\label{thm:convergence_NPS}
\textsl{
Let $A$ be a spectral operator in $\kB(\kH)$. Then the normalised power sequence of $A$, $\{|A^k|^{\frac{1}{k}}\}_{k\in\N}$, converges in norm to a positive operator in $\kB(\kH)$.
}
\end{thm}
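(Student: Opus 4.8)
The plan is to strip off the quasinilpotent part, straighten out the scalar-type part with a polar decomposition, and thereby reduce the theorem to the convergence of $\big|H^{k}S\big|^{1/k}$ for a positive invertible $H$ and an invertible $S$ — a Yamamoto-type statement — which is then settled by a spectral-partition argument. Since the only normalisation of $|A^k|$ that can converge is the $1/k$-th power, every estimate is kept in that form, and operator monotonicity of $x\mapsto x^{\alpha}$ on $[0,\infty)$ for $0<\alpha\le 1$ is used throughout to pass from estimates on $|A^k|^{2}$ to estimates on $|A^k|^{1/k}$.

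\textbf{Step 1: reduction to the scalar-type part.} Let $A=D+N$ be the Dunford decomposition, with $D$ scalar-type, $N$ quasinilpotent, $DN=ND$, and let $E(\cdot)$ be the (bounded) resolution of the identity of $A$. Fix a partition of $\sigma(A)$ into finitely many thin annuli $\sigma_1,\dots,\sigma_m$ by modulus; on $\kH_i:=E(\sigma_i)\kH$ (which is $A$-invariant, with $\sigma(A|_{\kH_i})\subseteq\sigma_i$) either $A$ restricts to a quasinilpotent, contributing only the zero part of the limit, or $D$ is invertible, $A^k=D^k(I+D^{-1}N)^k$ with $\big\|(I+D^{-1}N)^{\pm k}\big\|^{1/k}\to 1$, and the narrowness of $\sigma_i$ forces both $|A^k|^{1/k}$ and $|D^k|^{1/k}$ to be, for large $k$, close to the same scalar multiple of the identity on $\kH_i$, the error being controlled by the width of $\sigma_i$. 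Using the uniform bound on $E(\cdot)$ to glue the blocks and then refining the partition, one deduces $\big\||A^k|^{1/k}-|D^k|^{1/k}\big\|\to 0$; here the non-orthogonality of the spectral idempotents $E(\sigma_i)$ forces some care but is manageable. Hence it suffices to prove the theorem for the scalar-type operator $D$.

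\textbf{Step 2: straightening the scalar-type part.} Write $D=S^{-1}TS$ with $T$ normal and $S$ invertible in $\kB(\kH)$. After splitting off the kernel projection of $T$ (on whose range $D$ is quasinilpotent, again contributing $0$), we may assume $T$ invertible, with polar decomposition $T=V|T|=|T|V$, $V$ unitary commuting with $|T|$. Then $T^{k}=V^{k}|T|^{k}$ and
\[
|D^{k}|^{2} \;=\; S^{*}(T^{*})^{k}(SS^{*})^{-1}T^{k}S \;=\; (|T|^{k}S)^{*}\,\big(V^{-k}(SS^{*})^{-1}V^{k}\big)\,(|T|^{k}S).
\]
The bracketed factor is a unitary conjugate of $(SS^{*})^{-1}$, hence lies between the fixed positive scalars $\|S\|^{-2}$ and $\|S^{-1}\|^{2}$, so $\|S\|^{-2}\,(|T|^{k}S)^{*}(|T|^{k}S)\le |D^{k}|^{2}\le \|S^{-1}\|^{2}\,(|T|^{k}S)^{*}(|T|^{k}S)$. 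Applying $x\mapsto x^{1/(2k)}$ shows that $|D^{k}|^{1/k}$ and $\big|\,|T|^{k}S\,\big|^{1/k}=\big((|T|^{k}S)^{*}(|T|^{k}S)\big)^{1/(2k)}$ converge simultaneously and to the same limit. Writing $H=|T|\ge 0$, the theorem is thus reduced to: \emph{for a positive invertible $H$ and an invertible $S$ in $\kB(\kH)$, the sequence $\big|H^{k}S\big|^{1/k}$ converges in norm.}

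\textbf{Step 3: the core, and the main obstacle.} For the reduced claim, diagonalise $H$ through its spectral measure, partition its spectrum into thin annuli with mutually orthogonal projections $F_i$, and replace $H$ by $\sum_i s_i F_i$ at a cost that washes out under the $1/(2k)$-th power and refinement; one is then left with the finite combination $\sum_i s_i^{2k}\,S^{*}F_iS$ of fixed positive operators with geometrically separated weights, whose dominant-block structure can be unwound to show that $\big(\sum_i s_i^{2k}S^{*}F_iS\big)^{1/(2k)}$ is norm-Cauchy. This last point is the genuine difficulty: neither $S^{*}H^{2k}S$ nor $|A^k|^{2}$ itself converges under any normalisation other than the $1/(2k)$-th power, so the limit cannot be read off by continuity from a limit taken inside a functional calculus; one must instead establish directly that the $1/(2k)$-th-power sequence is Cauchy in norm, which is exactly the upgrade of the classical eigenvalue-level Yamamoto theorem to the operator-norm level, and amounts to showing that the spectral projections of $|A^k|^{1/k}$ stabilise as $k\to\infty$. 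Once norm-convergence is secured, positivity of the limit is automatic, being the norm-limit of the positive operators $|A^k|^{1/k}$; and the bookkeeping with the non-orthogonal idempotents of Step 1 and the kernel parts of Steps 1--2, while tedious, is routine by comparison.
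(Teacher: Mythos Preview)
The paper does not prove this statement; it is quoted verbatim as a result from \cite{Yamamoto-II} and used as a black box in the proof of Proposition~\ref{prop:main2}-(ii). There is therefore no argument in the paper for you to be compared against.

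Taken on its own terms, your outline is a plausible reduction strategy, but it does not constitute a proof. The essential gap is Step~3: you reduce the theorem to showing that $\big(\sum_i s_i^{2k}S^{*}F_iS\big)^{1/(2k)}$ is norm-Cauchy, correctly flag this as ``the genuine difficulty'', and then say it ``can be unwound'' without unwinding it. That step \emph{is} the operator-level Yamamoto theorem; everything before it is preprocessing. A proof that ends at the statement of the hard part is an outline, not a proof.

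There are also unresolved issues in the earlier steps. In Step~1, the idempotents $E(\sigma_i)$ are not self-adjoint, so $|A^k|$ does not leave the subspaces $\kH_i$ invariant, and the assertion that $|A^k|^{1/k}$ is ``close to a scalar multiple of the identity on $\kH_i$'' is not well-posed as written; the gluing you dismiss as ``manageable'' is a genuine obstacle, since you are trying to control a positive operator through a non-orthogonal decomposition. In Step~2, splitting off $\ker T$ does not render $T$ invertible on the complement: zero may lie in the continuous spectrum of the normal operator $T$, in which case $H=|T|$ is not bounded below and the reduction to ``positive invertible $H$'' fails. Both of these can likely be repaired (the second by an $H+\varepsilon I$ approximation, the first with more substantial work), but neither is handled in the proposal.
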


The limit in the theorem above can be elegantly described using the idempotent-valued spectral resolution of $A$. Nevertheless, for the sake of brevity and because the current version is entirely sufficient for our application in \S \ref{sec:JCD_decomposition}, we will not present it here. For those curious about the explicit expression of the limit, we direct them to \cite{Yamamoto-II}.

\subsection{Partially ordered sets and Scott topology}

A set $\fP$ with a binary relation, $\le$, that is reflexive, antisymmetric, and transitive, (called a partial order) is said to be a {\it poset} (short for partially ordered set). For $x, y \in \fP$, we write, $x < y$, when $x \ne y $ and $x \le y$. Below we note the example of a finite poset which is the most relevant for this article.

\begin{example}
Let $\kP_n$ denote the set of all partitions of $[n]$. For $\pi_1, \pi_2 \in \kP_n$ we write $\pi_1 \le \pi_2$ if $\pi_2$ is a refinement of $\pi_1$, that is, each element of $\pi_2$ is a subset of some element of $\pi_1$. It may be easily verified that $\le$ defines a partial order on $\kP_n$. Note that $\kP_n$ has a unique maximal element, the partition $\big\{ \{1\}, \{2\}, \ldots, \{n\} \big\}$, and a unique minimal element, the partition $\left\{ [n]  \right\}$.
\end{example}

\begin{definition}[upper set]
Let $(\fP, \le)$ be a partially ordered set. A subset $U \subseteq \fP$ is said to be an upper set if $y \in U$ whenever $x \le y$ for some $x \in U$. The upper set, $$x^\uparrow := \{y \in \fP : x \le y\},$$ is called the {\it principal upper set} generated by $x \in \fP$.

\end{definition}

\begin{definition}[The Scott topology]\label{def:scott_top}
Let $(\fP, \le)$ be a partially ordered set. A subset $U$ of $\fP$ is said to be {\it Scott-open} if it is an upper set which is  inaccessible by directed joins, that is, any directed set with supremum in $U$ has a non-empty intersection with $U$. If $\fP$ is a finite set, then each upper set is Scott open, and the set of upper sets forms a base for the Scott topology.
\end{definition}

\begin{definition}
\label{def:partition_wrt_vector}
Let $\vec{v} = (v_1, \ldots, v_n) $ be a vector in $\C^n$, and let $\sim_{\vec{v}}$ be a binary relation on $[n]$ defined by $i \sim_{\vec{v}} j$ if and only if $v_i = v_j$. It is straightforward to see that $\sim_{\vec{v}}$ is an equivalence relation on $[n]$, so that the set of equivalence classes for $\sim_{\vec{v}}$ forms a partition of $[n]$. We denote this partition of $[n]$ by $\sP(\vec{v})$. Note that the partition $\sP(\vec{v})$ of $[n]$ groups together the coordinate indices of $\vec{v}$ with the same coordinate value.
\end{definition}

The following lemma is used in Proposition \ref{prop:JCDgood}.

\begin{lem}\label{lem:partition_wrt_diag}
\textsl{
Let $(\kP_n, \le)$ denote the poset of all partitions of $[n]$, partially ordered via refinement and equipped with the Scott topology. Then the mapping from $\C^n$ to $\kP_n$, given by $\vec{v} \mapsto \sP(\vec{v})$, is continuous.
}
\end{lem}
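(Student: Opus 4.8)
The plan is to show that the preimage of every Scott-open subset of $\kP_n$ under the map $\vec v \mapsto \sP(\vec v)$ is open in $\C^n$. Since $\kP_n$ is finite, every upper set is Scott-open and the principal upper sets $\pi^\uparrow$ form a base; hence it suffices to prove that $\{\vec v \in \C^n : \pi \le \sP(\vec v)\}$ is open for each fixed partition $\pi \in \kP_n$. Unwinding the definitions, $\pi \le \sP(\vec v)$ means precisely that $\sP(\vec v)$ refines $\pi$, i.e.\ every block of $\sP(\vec v)$ sits inside a block of $\pi$; equivalently, whenever $i,j$ lie in different blocks of $\pi$ we have $v_i \ne v_j$ (if $i,j$ were in the same block of $\sP(\vec v)$, that block would have to lie inside a single block of $\pi$, forcing $i,j$ into the same block of $\pi$).

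So the key step is the identification
\[
\{\vec v \in \C^n : \pi \le \sP(\vec v)\} \;=\; \bigcap_{\substack{i,j \in [n]\\ i,j \text{ in distinct blocks of } \pi}} \{\vec v \in \C^n : v_i \ne v_j\}.
\]
Each set $\{\vec v : v_i \ne v_j\}$ is the preimage of the open set $\C \setminus \{0\}$ under the continuous map $\vec v \mapsto v_i - v_j$, hence open; the intersection is finite, so the left-hand side is open. This establishes continuity of $\vec v \mapsto \sP(\vec v)$.

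I do not anticipate a genuine obstacle here; the only point requiring a moment's care is the logical equivalence between ``$\sP(\vec v)$ refines $\pi$'' and ``$v_i \ne v_j$ for all $i,j$ in distinct $\pi$-blocks,'' which should be spelled out as above (one direction is immediate from the definition of refinement; the other uses that $i \sim_{\vec v} j$ implies $i,j$ lie in a common block of $\sP(\vec v)$, hence of $\pi$). An alternative, essentially equivalent formulation avoids mentioning refinement directly: note $\pi \le \sP(\vec v)$ iff for every block $B$ of $\pi$ the values $\{v_i : i \in B\}$ are ``separated'' from the values outside $B$ in the weak sense that no index outside $B$ shares a value with an index inside a block of $\pi$ different from $B$ — but the pairwise-distinctness description is cleanest to write.
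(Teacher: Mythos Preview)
Your argument is correct. Both you and the paper reduce to showing that the preimage of each principal upper set $\pi^{\uparrow}$ is open in $\C^n$. The paper carries this out by an explicit $\varepsilon$-ball argument: for $\vec v$ in the preimage it sets $\varepsilon(\vec v) = \min_{i \nsim_{\vec v} j} |v_i - v_j|$ and checks that any $\vec w$ within $\varepsilon(\vec v)/2$ of $\vec v$ (in sup-norm) satisfies $\sP(\vec w) \ge \sP(\vec v) \ge \pi$. You instead identify the preimage directly as the finite intersection $\bigcap \{\vec v : v_i \ne v_j\}$ over pairs $i,j$ in distinct $\pi$-blocks, each factor being manifestly open. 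The two arguments are equivalent in spirit; yours is a bit slicker in that it avoids choosing an explicit radius, while the paper's version makes the local refinement behaviour (that $\sP(\vec w)$ refines $\sP(\vec v)$ for $\vec w$ near $\vec v$) quantitatively visible.
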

\begin{proof}
For $\vec{v} = (v_1, \ldots, v_n) \in \C^n$, define $\varepsilon(\vec{v}) := \min_{i \nsim_{\vec{v}} j} \{ |v_i - v_j| \}$. Note that $|v_i - v_j| < \varepsilon(\vec{v})$ implies that $v_i = v_j$.  Let $\vec{w}$ be a vector in the open ball of radius $\frac{\varepsilon(\vec{v})}{2}$ centred at $\vec{v}$ with respect to the sup-norm, that is, $|v_k - w_k| < \frac{\varepsilon(\vec{v})}{2}$ for every $k \in \N$. Thus, for every pair of indices $i, j \in [n]$ with $w_i = w_j$, we have
$$ |v_i - v_j| \le |v_i - w_i| + |w_i - w_j| + |v_j - w_j| < \varepsilon(\vec{v}),$$
whence $v_i = v_j$. This implies that $\sP(\vec{w})$ is a refinement of $\sP(\vec{v})$. Equivalently, $\sP(\vec{w}) \in \sP(\vec{v}) ^{\uparrow}$. 

Let $\vec{u}, \vec{v} \in \C^n$ be vectors such that $\sP(\vec{u}) \in \sP(\vec{v})^{\uparrow}$. From the discussion in the above paragraph, for every vector $\vec{w}$ in the open $\frac{\varepsilon(\vec{u})}{2}$-ball centred at $\vec{u}$, we note that $\sP(\vec{w}) \in \sP(\vec{u})^{\uparrow} \subseteq \sP(\vec{v})^{\uparrow}$, that is, $\sP(\vec{w}) \in \sP(\vec{v})^{\uparrow}$. In summary, the inverse image of the principal upper set $\sP(\vec{v})^\uparrow$ is open in $\C^n$. Since the principal upper sets form a base for the Scott topology on $\kP_n$, continuity of the mapping, $\vec{v} \mapsto \sP(\vec{v})$, follows.
\end{proof}


\section{Unboundedness of the Jordan-Chevalley decomposition}

For a matrix $A \in M_n(\C)$, let $A = D(A) + N(A)$ be its Jordan-Chevalley decomposition. In this section, we accomplish the first goal of our article which is to show that the mapping, $A \mapsto D(A)$, is norm-unbounded on the unit ball of $M_n(\C)$ for $n \ge 3$.

\begin{prop}
 \textsl{
For every matrix $A \in M_2(\C)$, we have $\|D(A)\| \le \|A\|$. Thus, the image of the unit ball of $M_2(\C)$ is norm-bounded under the mapping $A \mapsto D(A)$.
 }  
\end{prop}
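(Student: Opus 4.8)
The plan is to dispose of this by a short case analysis on the number of distinct eigenvalues of $A$, using the elementary fact that the eigenvalues of $D(A)$ are exactly those of $A$ (counted with algebraic multiplicity), together with the spectral radius bound $\rho(A) \le \|A\|$.

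First I would record that $\spec{D(A)} = \spec{A}$: simultaneously triangularizing the commuting pair $D(A), N(A)$, with $N(A)$ strictly upper triangular (as it is nilpotent), the diagonal of the triangular form of $A = D(A) + N(A)$ agrees with that of $D(A)$. Hence every eigenvalue $\lambda$ of $D(A)$ satisfies $|\lambda| \le \rho(A) \le \|A\|$. Then I would split into two cases. If $A$ has two distinct eigenvalues, then $A$ itself is diagonalizable in $M_2(\C)$, so by uniqueness of the Jordan--Chevalley decomposition $D(A) = A$ and $N(A) = {\bf 0}_2$; thus $\|D(A)\| = \|A\|$. If instead $A$ has a single eigenvalue $\lambda$ of algebraic multiplicity $2$, then $D(A)$ is a diagonalizable element of $M_2(\C)$ whose only eigenvalue is $\lambda$; but a diagonalizable matrix with a single eigenvalue $\lambda$ is conjugate to $\mathrm{diag}(\lambda,\lambda) = \lambda I_2$, hence equal to it, so $D(A) = \lambda I_2$. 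Therefore $\|D(A)\| = |\lambda| \le \|A\|$ by the bound above (and incidentally $N(A) = A - \lambda I_2$ is nilpotent, as it must be).

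Combining the two cases gives $\|D(A)\| \le \|A\|$ for every $A \in M_2(\C)$, so the map $A \mapsto D(A)$ sends the unit ball of $M_2(\C)$ into itself; in particular it is norm-bounded there.

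There is no substantial obstacle in this argument: the only mildly delicate points are the identification $\spec{D(A)} = \spec{A}$ and the observation that a diagonalizable $2 \times 2$ matrix with a single eigenvalue is scalar, both of which are routine. It is worth flagging that this last observation is precisely what fails for $n \ge 3$ --- a diagonalizable matrix with few distinct eigenvalues need not be scalar --- which is the source of the unboundedness asserted in Theorem \ref{thm:main_S3}.
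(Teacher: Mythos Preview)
Your argument is correct and follows essentially the same route as the paper's own proof: a two-case split on the number of distinct eigenvalues, concluding $D(A)=A$ in the first case and $D(A)=\lambda I_2$ in the second. You add a little more justification (the identity $\spec{D(A)}=\spec{A}$ via simultaneous triangularization and the explicit spectral-radius bound), but the substance is identical.
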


\begin{proof}
Let $A \in M_2(\C)$. If $A$ has two distinct eigenvalues, then $A$ is diagonalizable. Thus we have $D(A) = A$ and $N(A)=0$, and in this case, $\norm{D(A)} = \norm{A}$.
If $A$ has only one eigenvalue $\lambda \in \C$ (with multiplicity $2$), then $D(A)$ must be similar to the scalar matrix $\lambda I_2$, whence $D(A) = \lambda I_2$. Thus, in this case, $\norm{D(A)} = \norm{\lambda I_2} = |\lambda| \le \norm{A}$.
\end{proof}

\begin{rem}
\label{rmrk:JC_gen}
From the uniqueness of Jordan-Chevalley decomposition, it is easily verified that  for a matrix $A \in M_n(\C)$ and an invertible matrix $S \in GL_n(\C)$, we have $D(SAS^{-1}) = SD(A)S^{-1}$. Since $\mathrm{sp}(A) = \mathrm{sp}(D(A))$, and for $\lambda \in \C$, $\lambda I_n$ is the only diagonalizable matrix in $M_n(\C)$ with spectrum $\{ \lambda \}$, we see that if $\mathrm{sp}(A) = \{ \lambda\}$, then $D(A) = \lambda I_n$.
\end{rem}

\begin{prop}
\label{prop:diagonalizable_part_of_block_triangular-3matrix}
\textsl{
Let $\lambda_1, \lambda_2 \in \C$ be distinct complex numbers. Let $A \in M_m(\C)$ and $ B \in M_n(\C)$ be complex matrices with $\spec A = \{ \lambda_1\}$ and $\spec B = \{ \lambda _2 \}$, and $C \in M_{m,n}(\C)$. Let $X \in M_{m,n}(\C)$ be the unique matrix satisfying $AX-XB = C$. Then the diagonalizable part of the matrix $$\begin{bmatrix}
    A & C\\
    {\bf 0}_{n,m} & B
\end{bmatrix},$$ in its Jordan-Chevalley decomposition is given by $$\begin{bmatrix}
    \lambda _1 I_m & (\lambda_1 - \lambda_2) X\\
     {\bf 0}_{n,m} & \lambda _2 I_n
\end{bmatrix}.$$ 
}
\end{prop}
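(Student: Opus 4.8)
The plan is to verify directly that the given block upper-triangular matrix is conjugate, via an explicit block upper-unitriangular similarity, to the block-diagonal matrix $\mathrm{diag}(A, B)$, and then to assemble the Jordan--Chevalley decomposition from the known decompositions of the two diagonal blocks $A$ and $B$.

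First I would recall why $X$ is well-defined: the Sylvester operator $Y \mapsto AY - YB$ on $M_{m,n}(\C)$ has spectrum $\spec A - \spec B = \{\lambda_1 - \lambda_2\}$, which does not contain $0$ since $\lambda_1 \ne \lambda_2$; hence it is invertible and $X$ is the unique solution of $AX - XB = C$. Next, set $S = \begin{bmatrix} I_m & X \\ {\bf 0}_{n,m} & I_n \end{bmatrix}$, which is invertible with $S^{-1} = \begin{bmatrix} I_m & -X \\ {\bf 0}_{n,m} & I_n \end{bmatrix}$. A short block computation shows
$$
S^{-1} \begin{bmatrix} A & C \\ {\bf 0}_{n,m} & B \end{bmatrix} S = \begin{bmatrix} A & AX - XB + C \\ {\bf 0}_{n,m} & B \end{bmatrix} \cdot(\text{wait, careful with order})
$$
— more precisely I would compute $S \begin{bmatrix} A & {\bf 0} \\ {\bf 0} & B \end{bmatrix} S^{-1}$ and check it equals the original matrix using $AX - XB = C$. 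So the original matrix is similar to $M := \mathrm{diag}(A,B)$.

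Now for $M$: since $\spec A = \{\lambda_1\}$, Remark \ref{rmrk:JC_gen} gives $D(A) = \lambda_1 I_m$ and hence $N(A) = A - \lambda_1 I_m$; similarly $D(B) = \lambda_2 I_n$, $N(B) = B - \lambda_2 I_n$. Then $D(M) = \mathrm{diag}(\lambda_1 I_m, \lambda_2 I_n)$ and $N(M) = \mathrm{diag}(N(A), N(B))$ is nilpotent, they commute, and $M = D(M) + N(M)$; by uniqueness of the Jordan--Chevalley decomposition over $\C$ this is \emph{the} decomposition of $M$. Finally, by the conjugation-equivariance recorded in Remark \ref{rmrk:JC_gen}, $D$ of the original matrix equals $S\, D(M)\, S^{-1}$. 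Computing $S\,\mathrm{diag}(\lambda_1 I_m, \lambda_2 I_n)\, S^{-1}$ as a block product yields $\begin{bmatrix} \lambda_1 I_m & \lambda_1 X - X \lambda_2 \\ {\bf 0}_{n,m} & \lambda_2 I_n \end{bmatrix} = \begin{bmatrix} \lambda_1 I_m & (\lambda_1 - \lambda_2) X \\ {\bf 0}_{n,m} & \lambda_2 I_n \end{bmatrix}$, which is the claimed formula.

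The only step requiring any care is bookkeeping the order of multiplication in the similarity (whether one conjugates by $S$ or $S^{-1}$, and correspondingly whether the cross term picks up $+C$ or $-C$); everything else is a routine $2\times 2$-block matrix computation together with the two cited remarks. There is no real obstacle: the content is entirely in the observation that the Sylvester equation $AX - XB = C$ is exactly what is needed to "straighten" the cross term, and that the same matrix $X$, rescaled by $\lambda_1 - \lambda_2$, then appears in the diagonalizable part because $D$ replaces $A$ and $B$ by the scalars $\lambda_1$ and $\lambda_2$.
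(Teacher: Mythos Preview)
Your approach is exactly the paper's: introduce the block-unitriangular $S$, use the Sylvester equation to block-diagonalize, read off $D(M')=\mathrm{diag}(\lambda_1 I_m,\lambda_2 I_n)$ via Remark~\ref{rmrk:JC_gen}, and conjugate back.

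One concrete bookkeeping slip (precisely the point you flagged): with your $S=\begin{bmatrix} I_m & X\\ {\bf 0} & I_n\end{bmatrix}$, one has
\[
S\begin{bmatrix} A & {\bf 0}\\ {\bf 0} & B\end{bmatrix}S^{-1}=\begin{bmatrix} A & -(AX-XB)\\ {\bf 0} & B\end{bmatrix}=\begin{bmatrix} A & -C\\ {\bf 0} & B\end{bmatrix},
\]
not the original matrix; correspondingly $S\,\mathrm{diag}(\lambda_1 I_m,\lambda_2 I_n)\,S^{-1}$ has $(\lambda_2-\lambda_1)X$ in the upper-right corner, not $(\lambda_1-\lambda_2)X$. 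The correct direction (as the paper does) is $S\begin{bmatrix} A & C\\ {\bf 0} & B\end{bmatrix}S^{-1}=\mathrm{diag}(A,B)$, so the diagonalizable part of the original matrix is $S^{-1}\,\mathrm{diag}(\lambda_1 I_m,\lambda_2 I_n)\,S$, which indeed gives $(\lambda_1-\lambda_2)X$. Once you swap the roles of $S$ and $S^{-1}$, everything lines up.
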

\begin{proof}
Since $\spec A \cap \, \spec B = \varnothing$, from \cite[Theorem 2.4.4.1]{matrix_analysis_Johnson} there is a unique matrix $X$ in $M_{m,n}(\C)$ solving the Sylvester equation $AX-XB=C$. Let 
\[
M :=
\begin{bmatrix}
    A & C\\
    {\bf 0}_{n,m} & B
\end{bmatrix},~
M':=
\begin{bmatrix}
     A & ~{\bf 0}_{m,n}\\
     {\bf 0}_{n,m} & B
\end{bmatrix},
~\mathrm{and}~
S:= \begin{bmatrix}
    I_m & X\\
    {\bf 0}_{n,m} & I_n
\end{bmatrix}.
\]
Note that,
\begin{equation*}
     SMS^{-1} = 
\begin{bmatrix}
    I_m & X\\
    {\bf 0}_{n,m} & I_n
\end{bmatrix}
\begin{bmatrix}
    A & C\\
   {\bf 0}_{n,m} & B
\end{bmatrix}
\begin{bmatrix}
    I_m & -X\\
    {\bf 0}_{n,m} & I_n
\end{bmatrix}
=
\begin{bmatrix}
    A & ~{\bf 0}_{m,n}\\
    ~{\bf 0}_{n,m} & B
\end{bmatrix}
=M'.
\end{equation*}
From Remark \ref{rmrk:JC_gen}, it follows that the diagonalizable part of $M'$ is $$D(M') := \begin{bmatrix}
    \lambda_1 I_m & ~{\bf 0}_{m,n}\\
    ~{\bf 0}_{n,m}          & \lambda_2 I_n
\end{bmatrix},$$ and the diagonalizable part of $M$ is
\begin{align*}
\label{diagonalizable_part_for_Schur_form}
D(M) = D(S^{-1}M'S) = S^{-1}D(M') S &=
\begin{bmatrix}
    I_m & -X\\
    {\bf 0}_{n,m} & I_n
\end{bmatrix}
\begin{bmatrix}
    \lambda_1 I_m & ~{\bf 0}_{m,n}\\
    ~{\bf 0}_{n,m} & \lambda_2 I_n
\end{bmatrix}
\begin{bmatrix}
    I_m & X\\
    {\bf 0}_{n,m} & I_n
\end{bmatrix}\\
&=
\begin{bmatrix}
    \lambda_1 I_m & (\lambda_1 - \lambda_2)X\\
    {\bf 0}_{n,m} & \lambda_2 I_n
\end{bmatrix}. \qedhere
\end{align*}
\end{proof}

\begin{thm}
\label{thm:main_S3}
\textsl{
For $n \ge 3$, the image of the unit ball of $M_n(\C)$ under the mapping $A \mapsto D(A)$ is norm-unbounded.
}
\end{thm}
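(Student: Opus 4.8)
The plan is to exhibit an explicit family of $3 \times 3$ matrices in the unit ball whose diagonalizable parts have arbitrarily large norm, and then bootstrap to $n \ge 3$ by padding with zeros. The engine is Proposition \ref{prop:diagonalizable_part_of_block_triangular-3matrix}: for a block-triangular matrix $\left[\begin{smallmatrix} A & C \\ {\bf 0} & B \end{smallmatrix}\right]$ with $\spec A = \{\lambda_1\}$, $\spec B = \{\lambda_2\}$, $\lambda_1 \ne \lambda_2$, the off-diagonal block of the diagonalizable part is $(\lambda_1 - \lambda_2) X$, where $X$ solves the Sylvester equation $AX - XB = C$. The point is that $X$ can be made to have large norm even when $A, B, C$ are small in norm, because the solution operator of the Sylvester equation is poorly conditioned when $\lambda_1$ and $\lambda_2$ are close.

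The cleanest instance: take $m = 2$, $n = 1$ (so we are in $M_3(\C)$). Let $A = \lambda_1 I_2 + J$ where $J = E_{12}$ is the nilpotent Jordan block, and let $B = [\lambda_2]$. Then the Sylvester equation $AX - XB = C$ for $X \in M_{2,1}(\C)$ becomes $(\lambda_1 - \lambda_2) X + JX = C$, i.e. $((\lambda_1 - \lambda_2) I_2 + J) X = C$. Writing $\delta = \lambda_1 - \lambda_2$, the matrix $\delta I_2 + J$ has inverse $\delta^{-1} I_2 - \delta^{-2} J$, so $X = (\delta^{-1} I_2 - \delta^{-2} J) C$. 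Choosing $C = \left[\begin{smallmatrix} 0 \\ 1 \end{smallmatrix}\right]$ gives $X = \left[\begin{smallmatrix} \delta^{-2} \\ \delta^{-1} \end{smallmatrix}\right]$ (up to sign of the $J$-term, which is harmless), so $\|X\| \ge |\delta|^{-2}$, and the off-diagonal block of $D(M)$ is $\delta X$, which has norm $\ge |\delta|^{-1}$. Meanwhile, by choosing $\lambda_1, \lambda_2$ with $|\lambda_1|, |\lambda_2| \le 1/2$ and $\delta = \lambda_1 - \lambda_2$ small, and noting $\|J\| = 1$, one checks $\|M\| \le \|A\| + \|C\| + \|B\|$ stays bounded — in fact one should rescale: replace $M$ by $M/\|M\|$, which only multiplies $D(M)$ by $1/\|M\|$, a bounded factor, so $\|D(M/\|M\|)\| \ge c|\delta|^{-1} \to \infty$ as $\delta \to 0$. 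I would present the estimate by taking, say, $\lambda_1 = \delta/2$, $\lambda_2 = -\delta/2$ for real $\delta \in (0,1)$, so that $\|M\|$ is uniformly bounded (independent of $\delta$) while $\|D(M)\| \gtrsim 1/|\delta|$.

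For general $n \ge 3$, embed the $3 \times 3$ example as the top-left block of an $n \times n$ matrix, padding with $\mathbf{0}$'s: if $M \in M_3(\C)$ and $\widetilde{M} = M \oplus \mathbf{0}_{n-3} \in M_n(\C)$, then by uniqueness of the Jordan-Chevalley decomposition (and since $D(M) \oplus \mathbf{0}_{n-3}$ is diagonalizable, $N(M) \oplus \mathbf{0}_{n-3}$ is nilpotent, and the two commute), we get $D(\widetilde{M}) = D(M) \oplus \mathbf{0}_{n-3}$; moreover $\|\widetilde{M}\| = \|M\|$ and $\|D(\widetilde{M})\| = \|D(M)\|$, so unboundedness transfers verbatim.

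The main obstacle is purely bookkeeping: pinning down the norm estimates so that $\|M\|$ (or $\|M\| / \|M\|$ after normalization) is genuinely bounded uniformly in the parameter while $\|D(M)\|$ genuinely blows up. There is no conceptual difficulty — it is a one-parameter computation — but one must be a little careful that the growth of $\|X\|$ (like $|\delta|^{-2}$) beats the shrinking prefactor $|\delta|$ coming from $(\lambda_1 - \lambda_2)$, which it does, leaving net growth $|\delta|^{-1}$. I would also double-check that $\spec A = \{\lambda_1\}$ and $\spec B = \{\lambda_2\}$ are indeed distinct singletons so that Proposition \ref{prop:diagonalizable_part_of_block_triangular-3matrix} applies as stated.
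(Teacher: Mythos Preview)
Your proposal is correct and follows essentially the same approach as the paper: both use Proposition~\ref{prop:diagonalizable_part_of_block_triangular-3matrix} with the $2+1$ block structure $A=\lambda_1 I_2+E_{12}$, $B=[\lambda_2]$, $C=\left[\begin{smallmatrix}0\\1\end{smallmatrix}\right]$, compute the Sylvester solution $X$ explicitly, and observe that the off-diagonal block $\delta X$ of $D(M)$ blows up like $|\delta|^{-1}$ while $\|M\|$ stays bounded; the extension to $n>3$ by padding with $\mathbf{0}_{n-3}$ is also identical. The only cosmetic difference is that the paper fixes $\lambda_2=0$ from the outset (so the Sylvester equation reduces to $A_\lambda X=v$ and no normalization discussion is needed), whereas you keep both eigenvalues free and then specialize --- but the computation and conclusion are the same.
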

\begin{proof}
First we prove the result for $n=3$. Let $\lambda \in \C \setminus \{ 0 \}$, $A_{\lambda} :=  \begin{bmatrix}
\lambda & 1 \\
0 & \lambda
\end{bmatrix} \in M_2(\C)$, and $v = \begin{bmatrix}
    0 \\
    1
\end{bmatrix} \in M_{2,1}(\C)$.
Then the equation $A_\lambda X = v$ has a unique solution, given by 
$$X_\lambda = A_{\lambda}^{-1}v = \begin{bmatrix}
    \frac{1}{\lambda} & -\frac{1}{\lambda ^2}\\
    0 & \frac{1}{\lambda}
\end{bmatrix} \begin{bmatrix}
    0\\
    1
\end{bmatrix} = \begin{bmatrix}
    -\frac{1}{\lambda ^2}\\
    \frac{1}{\lambda}
\end{bmatrix}.$$
Let $T_\lambda \in M_3(\C)$ be defined by
$$T_{\lambda} := \begin{bmatrix}
    A_{\lambda} & v\\
    {\bf 0}_{1,2} & 0
\end{bmatrix} = \begin{bmatrix}
\lambda & 1 & 0\\
0 & \lambda & 1\\
0 & 0 & 0
\end{bmatrix}.$$ 
Since  $A_{\lambda}X_{\lambda} -X_{\lambda} \cdot 0  = v$, from Proposition \ref{prop:diagonalizable_part_of_block_triangular-3matrix} the diagonalizable part of $T_\lambda$ in its Jordan-Chevalley decomposition is given by
\[
D(T_{\lambda}) : = 
\begin{bmatrix}
\lambda I_2 & \lambda X_\lambda \\
{\bf 0}_{1,2} & 0
\end{bmatrix} = 
\begin{bmatrix}
\lambda & 0 & -\frac{1}{\lambda}\\
0 & \lambda & 1\\
0 & 0 & 0
\end{bmatrix}.
\]
Thus the Jordan-Chevalley decomposition of $T_{\lambda}$ is given by,
\begin{equation}
\label{eqn:JC_unbounded}
\begin{bmatrix}
\lambda & 1 & 0\\
0 & \lambda & 1\\
0 & 0 & 0
\end{bmatrix} = 
\begin{bmatrix}
\lambda & 0 & -\frac{1}{\lambda}\\
0 & \lambda & 1\\
0 & 0 & 0
\end{bmatrix} + 
\begin{bmatrix}
0 & 1 & \frac{1}{\lambda}\\
0 & 0 & 0\\
0 & 0 & 0
\end{bmatrix}.    
\end{equation}

With $\lambda _k := \frac{1}{k}$ for $k \in \N$, we note that the sequence $\{ T_{\lambda _k} \}_{k \in \N}$ converges in norm to a Jordan matrix (and hence norm-bounded), while the sequence $\{ D(T_{\lambda _k}) \}_{k \in \N}$ is unbounded in norm as the $(1, 3)^{\textrm{th}}$ entry of $D(T_{\lambda _k})$ is $-k$. 

For $n > 3$, considering the sequence $\{ T_{\lambda _k} ~\oplus~ {\bf 0}_{n-3} \}_{k \in \N}$ in $M_n(\C)$, a similar conclusion may be drawn.
\end{proof}


\section{Vector-valued normal functions on Stonean spaces}
\label{sec:normal_func}

We begin this section by reviewing a few basic facts about Stonean spaces relevant to the article. Notably, Lemma \ref{lem:collection_of_disj_open_sets} is established to facilitate the transition from $M_n(\C)$ to $M_n\big(\sN(X)\big)$ in Theorem \ref{thm:main1}, via Proposition \ref{prop:JCDgood}. Then we broaden the scope of the notion of normal functions on Stonean spaces (see \cite{Stone}, \cite{Kelley}, \cite{algebras_of_unbounded_functions_and_operators}) to encompass functions with values in a finite-dimensional $C^*$-algebra, and explore their basic properties.

\subsection{Some relevant facts about Stonean spaces}
\begin{definition}[Stonean space]\label{def:Stonea_sp}
A compact Hausdorff space $X$ is said to be a {\it Stonean space} if the closure of each open set in $X$ is open in $X$. Such spaces are also known as {\it extremally disconnected spaces} in the literature.
\end{definition}

\begin{lem}
\label{lem:sto_sp_basic}
\textsl{
Let $X$ be a Stonean space, and $O_1 \subseteq O_2$ be open subsets of $X$. Then $$\overline{O_1} \backslash \overline{O_2} = \overline{O_1 \backslash \overline{O_2}}.$$
}
\end{lem}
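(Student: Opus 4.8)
The claim is that for open sets $O_1 \subseteq O_2$ in a Stonean space $X$, one has $\overline{O_1} \setminus \overline{O_2} = \overline{O_1 \setminus \overline{O_2}}$. The key structural fact I would exploit is that $X$ is Stonean, so $\overline{O_2}$ is clopen; hence its complement $Q := X \setminus \overline{O_2}$ is clopen as well. This immediately brings Lemma \ref{lem:top_sp_basic}-(ii) into play: for the clopen set $Q$ and the subset $S := O_1$, we get $Q \cap \overline{O_1} = \overline{Q \cap O_1}$.

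So the plan is essentially a one-line rewrite: $\overline{O_1} \setminus \overline{O_2} = \overline{O_1} \cap (X \setminus \overline{O_2}) = \overline{O_1} \cap Q = \overline{Q \cap O_1} = \overline{O_1 \cap (X \setminus \overline{O_2})} = \overline{O_1 \setminus \overline{O_2}}$, where the middle equality is Lemma \ref{lem:top_sp_basic}-(ii). The only things to check are that $Q$ is genuinely clopen (open because $O_2$ is open hence $\overline{O_2}$ is open in a Stonean space, and closed as the complement of the open — indeed clopen — set $\overline{O_2}$) and that the set-theoretic manipulations $\overline{O_1}\setminus \overline{O_2} = \overline{O_1}\cap Q$ and $O_1 \setminus \overline{O_2} = O_1 \cap Q$ are just unwinding the definition of set difference.

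I do not anticipate any real obstacle here; the hypothesis $O_1 \subseteq O_2$ is not even needed for this argument, which suggests the lemma as stated is a slightly specialized instance of the clopen-intersection identity — the inclusion is presumably just the form in which it will later be applied. The one point deserving a word of care is making explicit the appeal to the defining property of Stonean spaces (closure of an open set is open) to conclude $\overline{O_2}$ is clopen; everything else is formal. I would therefore write the proof as: note $\overline{O_2}$ is clopen since $X$ is Stonean; set $Q = X \setminus \overline{O_2}$, a clopen set; apply Lemma \ref{lem:top_sp_basic}-(ii) with this $Q$ and $S = O_1$; and read off the conclusion after rewriting both sides as intersections with $Q$.
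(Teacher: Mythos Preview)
Your proposal is correct and essentially identical to the paper's proof: both observe that $X \setminus \overline{O_2}$ is clopen (since $X$ is Stonean) and then apply Lemma~\ref{lem:top_sp_basic}-(ii) with $Q = X \setminus \overline{O_2}$ and $S = O_1$ to obtain the chain of equalities. Your observation that the inclusion hypothesis $O_1 \subseteq O_2$ is never used is also correct---indeed, as written that hypothesis would make both sides empty; the application in Proposition~\ref{prop:clopen_split} actually uses the case $O_2 \subseteq O_1$, so the stated inclusion appears to be a typo.
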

\begin{proof}
Since $X \backslash\overline{O_2}$ is clopen, from Lemma \ref{lem:top_sp_basic}-{(ii)}, we observe that, 
\[
\overline{O_1}\backslash\overline{O_2} = \overline{O_1} \cap (X \backslash \overline{O_2}) = \overline{O_1 \cap (X \backslash \overline{O_2})} = \overline{O_1 \backslash \overline{O_2}}. \hfill \qedhere
\]
\end{proof}

\begin{thm}[Extension theorem]\label{thm:extension_theorem1}
\textsl{
Let $X$ be a Stonean space, $O$ be an open subset of $X$, and $Z$ be a compact Hausdorff space. Then every continuous map $f : O \to Z$ has a unique continuous extension $\widetilde{f} : \overline{O} \to Z$. (Note that, by the universal property of the Stone-\v{C}ech compactification, the closure $\overline{O}$ must then be homeomorphic to $\beta O$.)
}
\end{thm}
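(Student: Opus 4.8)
The plan is to prove the Extension Theorem by reducing it, via the maximality/density structure of Stonean spaces, to the universal property of the Stone–Čech compactification. First I would record the key topological fact underlying everything: if $X$ is Stonean and $O \subseteq X$ is open, then $O$ is \emph{dense} in the clopen set $\overline{O}$, and more is true — $\overline{O}$ is itself an extremally disconnected compact Hausdorff space (a clopen subset of a Stonean space is Stonean in the subspace topology). So we may as well replace $X$ by $\overline{O}$ and prove: if $O$ is a dense open subset of a Stonean space $Y$, then every continuous $f : O \to Z$ (with $Z$ compact Hausdorff) extends uniquely to a continuous $\widetilde f : Y \to Z$.

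\medskip

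\noindent The uniqueness half is immediate: $O$ is dense in $\overline{O}$, two continuous maps into a Hausdorff space agreeing on a dense set agree everywhere. For existence, the main step is to show that $O$, with its subspace topology, is \emph{$C^*$-embedded} in $\overline{O}$ — equivalently, that $\overline{O}$ \emph{is} the Stone–Čech compactification $\beta O$ (with $O \hookrightarrow \overline{O}$ the canonical dense embedding). Granting this, the universal property of $\beta O$ hands us the continuous extension $\widetilde f : \overline{O} = \beta O \to Z$ directly, since $Z$ is compact Hausdorff. To establish $\overline{O} = \beta O$, I would verify the two defining properties of $\beta O$ for the pair $(\overline{O}, \iota)$ where $\iota : O \hookrightarrow \overline{O}$: (1) $\iota$ is a dense embedding into a compact Hausdorff space — clear, since $\overline O$ is a closed subset of the compact Hausdorff $X$ hence compact Hausdorff, and $O$ is dense in it by definition; (2) every bounded continuous real-valued function on $O$ extends continuously to $\overline{O}$. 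Property (2) is the heart of the matter. Given $g \in C_b(O, \R)$, one covers its range by finitely many small arcs, uses extremal disconnectedness of $X$ to produce, for each level set of a finite partition, a clopen set whose trace on $O$ approximates $\{g \le t\}$, and thereby builds on $\overline O$ a sequence of continuous functions converging uniformly to an extension of $g$; the standard slick version of this argument is that the characteristic function argument forces $\overline{g^{-1}(U)}$ to be clopen for basic open $U$, which is exactly extremal disconnectedness, and then the extension is assembled by a uniform-approximation (Cauchy-sequence-of-simple-functions) limit. Alternatively, and more cleanly, one can invoke the classical theorem (Gleason / "Stonean $=$ projective in compact Hausdorff spaces", or simply the statement that a Stonean space is its own absolute) that every dense subspace of an extremally disconnected compact space is $C^*$-embedded; I would cite this rather than reprove it if a reference is available, e.g.\ in the literature on $AW^*$-algebras already being used.

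\medskip

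\noindent Once $\overline O \cong \beta O$ is in hand, the theorem follows in one line: the continuous map $f : O \to Z$ into the compact Hausdorff space $Z$ factors uniquely through the Stone–Čech compactification, yielding $\widetilde f : \overline O \to Z$ continuous with $\widetilde f|_O = f$, and uniqueness is density of $O$ in $\overline O$ plus Hausdorffness of $Z$. The parenthetical remark in the statement is then not an extra claim but a byproduct of the proof.

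\medskip

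\noindent The main obstacle is property (2) — showing $O$ is $C^*$-embedded in $\overline O$, i.e.\ that real-valued bounded continuous functions on the open set $O$ extend across its closure. Everything else (compactness of $\overline O$, density, uniqueness, the final invocation of the universal property) is routine. The cleanest route is to isolate exactly how extremal disconnectedness is used: for $g \in C_b(O,\R)$ and reals $a < b$, the sets $\overline{\{x \in O : g(x) < b\}}$ and $\overline{\{x \in O : g(x) > a\}}$ are clopen in $X$ (hence in $\overline O$) and their union is all of $\overline O$ once $(a,b)$ covers the range of $g$ — iterating this over a fine finite cover of $\mathrm{ran}(g)$ produces a partition of $\overline O$ into clopen pieces on each of which the sought extension varies by less than $\varepsilon$, giving a uniformly Cauchy sequence of clopen-simple functions whose limit is $\widetilde g$. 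I would present that argument (or cite it) as the single lemma doing the real work, and treat the rest as formal.
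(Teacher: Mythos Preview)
Your proposal is correct, but note that the paper does not actually give a proof here: it simply cites standard references (Walker, Exercise~2J.4; Gillman--Jerison, Problem~1H.6). So there is no ``paper's approach'' to compare against beyond ``look it up.''

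Your route---reduce to $Y = \overline{O}$, prove $O$ is $C^*$-embedded in $Y$ using extremal disconnectedness, conclude $Y \cong \beta O$, then invoke the universal property---is one of the standard proofs and is essentially what those references do. One small organizational remark: as written you derive $\overline{O} \cong \beta O$ \emph{first} and then deduce the extension property, whereas the parenthetical in the statement suggests the reverse logical order (extension theorem $\Rightarrow$ $\overline{O} \cong \beta O$). This is harmless---your real content is the $C^*$-embedding lemma, which is strictly weaker than the full statement, and you correctly upgrade from real-valued bounded extensions to arbitrary compact Hausdorff targets via $\beta$. A marginally more direct alternative, which avoids the detour through $\beta O$, is to argue pointwise: for $x \in \overline{O}$, use compactness of $Z$ to pick a cluster point of $f$ along the neighborhood filter of $x$ traced on $O$, and use extremal disconnectedness to show this cluster point is unique (two distinct cluster values would give disjoint open preimages in $O$ whose closures both contain $x$, contradicting that disjoint open sets in a Stonean space have disjoint closures). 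Either argument is fine; yours is the more structural one and makes the $\beta O$ identification transparent rather than a corollary.
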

\begin{proof}
This may be found in standard textbooks such as \cite[Exercise 2J.4]{Walker}, \cite[Problem 1H.6]{Gillman_Jerison}.
\end{proof}

It is well known that every infinite compact $F$-space (see definition in \cite[\S 14.24]{Gillman_Jerison}) contains a copy of $\beta \N$ (see \cite[Problem 14M.5]{Gillman_Jerison}). Using Theorem \ref{thm:extension_theorem1}, below we prove the result in the special case of infinite Stonean spaces for the convenience of the reader.
 
\begin{prop}
\label{prop:inf_st_sp}
\textsl{
A Stonean space $X$ with infinitely many points contains a copy of $\beta \N$, that is, there is a continuous injection $\beta \N \hookrightarrow X$. (Since $\beta \N$ is compact, it is homeomorphic to its image in $X$.) 
}
\end{prop}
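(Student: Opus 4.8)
The plan is to build the continuous injection $\beta\N\hookrightarrow X$ by first finding a countably infinite family of pairwise disjoint nonempty open subsets of $X$, then exploiting the Stonean hypothesis and the Extension Theorem (Theorem \ref{thm:extension_theorem1}) to upgrade a map from $\N$ into a map on $\beta\N$. First I would produce the disjoint open sets: since $X$ is infinite Hausdorff, it has infinitely many points, and by a standard recursive argument (pick two distinct points, separate them by disjoint open sets, one of which we peel off and the other of which still contains infinitely many points, and iterate) one obtains a sequence $\{U_k\}_{k\in\N}$ of nonempty pairwise disjoint open sets. In each $U_k$ choose a point $x_k$; the $x_k$ are then distinct, so the map $\iota_0 : \N \to X$, $k \mapsto x_k$, is injective.

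Next I would extend $\iota_0$ to $\beta\N$. Since $\N$ is discrete and $X$ is compact Hausdorff, the universal property of the Stone–\v{C}ech compactification gives a continuous extension $\iota : \beta\N \to X$. The crux is injectivity of $\iota$. For this I would use that $\beta\N$ is, up to homeomorphism, $\overline{O}$ for a suitable open set $O\subseteq X$: take $O := \bigcup_{k} U_k$, an open subset of $X$ whose closure $\overline O$ is, by Theorem \ref{thm:extension_theorem1} applied to (say) a map separating the $U_k$, homeomorphic to $\beta O$; and since $O$ is a disjoint topological sum of the (nonempty) opens $U_k$, its Stone–\v{C}ech compactification $\beta O$ contains $\beta\N$ as a (closed, hence compact) subspace via the points $x_k$. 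More directly: the family $\{\overline{U_k}\}$ consists of pairwise disjoint clopen sets (clopen because $X$ is Stonean and the $\overline{U_k}$ are closures of opens; pairwise disjoint by Lemma \ref{lem:top_sp_basic}-(i)), and one checks $\iota$ carries distinct points of $\beta\N$ lying over the same index into the same $\overline{U_k}$, while points lying over distinct ultrafilters are separated by pulling back the clopen partition $\{\overline{U_k}\}$ — the key point being that for an ultrafilter $p\in\beta\N$, $\iota(p)\in\overline{U_k}$ iff $\{k\}\in p$, which pins down $\iota(p)$'s location enough to separate distinct ultrafilters using disjointness of closures together with the extremal disconnectedness.

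I expect the main obstacle to be the injectivity verification: getting from "the $x_k$ are distinct" to "$\iota$ is injective on all of $\beta\N$" requires showing that the pairwise disjoint open sets $U_k$ have pairwise disjoint closures (immediate from Lemma \ref{lem:top_sp_basic}-(i), so not the obstacle), and then that $\iota$ respects the clopen algebra generated by the $\overline{U_k}$ finely enough to separate points — this is where the Stonean hypothesis does real work, because in a general compact Hausdorff space the closures of disjoint opens need not be disjoint and the argument collapses. Concretely, for distinct $p,q\in\beta\N$ choose $A\subseteq\N$ with $A\in p$, $\N\setminus A\in q$; then $\overline{\bigcup_{k\in A}U_k}$ and $\overline{\bigcup_{k\notin A}U_k}$ are disjoint clopen sets (using Lemma \ref{lem:sto_sp_basic} / Lemma \ref{lem:top_sp_basic}-(ii) to control closures of unions) containing $\iota(p)$ and $\iota(q)$ respectively, giving $\iota(p)\neq\iota(q)$. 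Finally, since $\beta\N$ is compact and $X$ Hausdorff, the continuous injection $\iota$ is a homeomorphism onto its image, completing the proof.
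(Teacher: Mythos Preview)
Your argument is correct, and the final ``Concretely'' paragraph is the heart of it: for distinct $p,q\in\beta\N$ you pick $A\subseteq\N$ with $A\in p$, $\N\setminus A\in q$, and separate $\iota(p)$ from $\iota(q)$ by the clopen sets $\overline{\bigcup_{k\in A}U_k}$ and $\overline{\bigcup_{k\notin A}U_k}$. The key fact---that disjoint open sets in a Stonean space have disjoint closures---follows from Lemma~\ref{lem:top_sp_basic}-(i) together with the extremal disconnectedness (if $O_1\cap O_2=\varnothing$ then $O_2\cap\overline{O_1}=\varnothing$, and since $\overline{O_1}$ is clopen, $X\setminus\overline{O_1}$ is closed and contains $O_2$, hence $\overline{O_2}$). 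Two minor remarks: your parenthetical sketch for producing the $U_k$ (``one of which \ldots\ still contains infinitely many points'') is not literally correct as stated, since the two separating opens need not cover the space; but the existence of infinitely many pairwise disjoint nonempty opens in an infinite compact Hausdorff space is standard (use a non-isolated point, which must exist by compactness). Also, the intermediate passage about $\beta O$ containing $\beta\N$ is vague and best discarded in favor of your direct argument.

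The paper takes a different route. Rather than extending $\iota_0$ via the universal property of $\beta\N$ and then proving injectivity, it constructs the disjoint sets as \emph{clopen} sets $O_n$ (via a descending chain of infinite clopens), picks $x_n\in O_n$, and then shows directly that the closure $\overline{\mathbb{A}}$ of $\mathbb{A}=\{x_n\}$ in $X$ satisfies the universal property of $\beta\mathbb{A}$: any map $f:\mathbb{A}\to Z$ into a compact Hausdorff space is first extended to $\bigcup_n O_n$ by making it constant on each $O_n$, then extended to $\overline{\bigcup_n O_n}$ via Theorem~\ref{thm:extension_theorem1}, and finally restricted to $\overline{\mathbb{A}}$. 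This identifies the image of $\beta\N$ explicitly as $\overline{\mathbb{A}}$ and packages the Stonean hypothesis into a single appeal to the extension theorem, whereas your approach uses the Stonean hypothesis through the disjoint-closures fact and is arguably more hands-on about why injectivity holds. Both arguments are of comparable length; yours has the virtue that the role of ultrafilters is completely transparent.
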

\begin{proof}
As $X$ is Hausdorff, there is a non-empty open subset of $X$ which is not dense in $X$, and considering its closure we have a proper non-empty clopen subset $Q$ of $X$. Since $X$ is infinite, either $Q$ or $X \backslash Q$ must be infinite. Thus, there is an infinite clopen subset $Q_1 \subsetneq X$. Note that $Q_1$ itself is an infinite Stonean space. Inductively, we get a sequence $\{Q_n\}_{n \in \N}$ of infinite subsets of $X$, such that
$$X =: Q_0 \supsetneq Q_1 \supsetneq \cdots \supsetneq Q_n \supsetneq \cdots,$$
and each $Q_{n}$ is a clopen subset of $Q_{n-1}$, and thus a clopen subset of $X$.

For $n \in \N$, we define $O_n := Q_{n-1} \backslash Q_{n}$. Clearly $O_n$'s are mutually disjoint non-empty (cl)open subsets of $X$. For every $n \in \N$, we choose a point $x_n \in O_n$. It is clear that the set $\mathbb A := \{x_n : n \in \N\}$ is an infinite discrete subset of $X$, so that the mapping from $\N$ to $\mathbb A$ given by $n \mapsto x_n$, is a homeomorphism. 

Let $Z$ be a compact Hausdorff space, and $f : \mathbb A \to Z$ be a function; note that $f$ is automatically continuous as $\mathbb A$ is a discrete subset of $X$. Let $F : \bigcup_{n \in \N} O_n \to Z$ be the function defined as $F(x) = f(x)$ if $x \in O_n$. Since $O_n$'s are pairwise disjoint open sets, by the gluing lemma (see \cite[Theorem 18.2(f)]{munkres}), $F$ is a well-defined continuous mapping on the open set $\sO := \bigcup_{n \in \N}O_n$. By Theorem \ref{thm:extension_theorem1}, $F$ extends uniquely to a continuous map $\widetilde{F}$ on $\overline{\sO}$. As $\mathbb A$ is dense in $\overline{\mathbb A}$, there is at most one continuous extension of $f$ on $\overline{\mathbb A}$, and thus the restriction of $\widetilde{F}$ to $\overline{\mathbb A}$ is the unique continuous extension of $f$ to $\overline{\mathbb A}$.

By the universal property of Stone--\v{C}ech compactification (see \cite[Theorem~6.5]{Gillman_Jerison}),  $\overline{\mathbb A} \subseteq X$ is homeomorphic to $\beta \mathbb A$. Since $\N$ and $\mathbb A$ are homeomorphic, so are $\beta \N$ and $\beta \mathbb A$. Hence, $\beta \N$ is homeomorphic to $\overline{\mathbb A} \subseteq X$.
\end{proof}

\begin{lem}
\label{lem:collection_of_disj_open_sets}
\textsl{
Let $X$ be a Stonean space, $\sO$ be an open dense subset of $X$, and $(\fP, \le)$ be a finite partially ordered set equipped with the Scott topology. Let $\varphi : \sO \to (\fP, \le)$ be a continuous mapping, and for $p \in \fP$, consider the subset $X_{p} := \varphi^{-1}\{p\}$ of $\sO$. Then there is a collection $\{O_p\}_{p \in \fP}$ of mutually disjoint open subsets of $X$ such that $O_p \subseteq X_{p}$  and $\bigcup_{p \in \fP} O_p \subseteq \sO$ is dense in $X$.
}
\end{lem}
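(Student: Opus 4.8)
The plan is to induct on the poset $(\fP,\le)$, building the open sets $O_p$ from the maximal elements downward, so that at each stage we have already "used up" the strictly larger elements. The key structural fact I want to exploit is that, since $\varphi$ is continuous into the Scott topology and $p^\uparrow$ is Scott-open, each set $\varphi^{-1}(p^\uparrow) = \bigcup_{q \ge p} X_q$ is open in $\sO$, hence open in $X$ (as $\sO$ is open in $X$); in particular $X_p$ itself is the difference of the open set $\varphi^{-1}(p^\uparrow)$ and the open set $\varphi^{-1}(p^\uparrow \setminus \{p\}) = \bigcup_{q > p} X_q$, so $X_p$ is locally closed. The disjointness of the $O_p$ will be arranged by always placing $O_p$ inside the clopen complement of the closures of the sets already chosen for elements $q > p$.

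In detail: enumerate $\fP$ in a way compatible with $\le$, processing $p$ only after all $q > p$ have been handled (this is possible since $\fP$ is finite). Suppose inductively that for all $q$ strictly above $p$ we have chosen disjoint open $O_q \subseteq X_q$. Let $U_p := \varphi^{-1}(p^\uparrow)$, an open subset of $X$, and let $W_p := \bigcup_{q > p} \overline{O_q}$; since $X$ is Stonean each $\overline{O_q}$ is clopen, and the finite union $W_p$ is clopen. I would then set
\[
O_p := U_p \setminus \big(\overline{U_p \setminus X_p} \cup W_p\big),
\]
or more carefully: first take $V_p := U_p \setminus \overline{\,\bigcup_{q>p} X_q\,}$, which is open, sits inside $X_p$ (because $U_p \setminus X_p = \bigcup_{q>p}X_q$ and removing its closure removes in particular itself), and then set $O_p := V_p \setminus W_p$, still open, still inside $X_p$, and now disjoint from every $O_q$ with $q > p$ since $O_q \subseteq \overline{O_q} \subseteq W_p$. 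Disjointness from $O_{q'}$ for incomparable $q'$ follows because such $O_{q'}$ lands in $X_{q'}$ which is disjoint from $X_p \supseteq O_p$; disjointness from $O_q$ with $q < p$ is automatic once we observe the construction is symmetric — but it is cleanest simply to note $O_q \subseteq X_q$ and $O_p \subseteq X_p$ are disjoint unless $q = p$. So in fact I only need $W_p$ to handle comparabilities; incomparable and smaller indices are free. Thus a single clean choice $O_p := \big(U_p \setminus \overline{\bigcup_{q>p}X_q}\big)\setminus \bigcup_{q>p}\overline{O_q}$ works, and by construction the $\{O_p\}_{p\in\fP}$ are mutually disjoint open subsets of $X$ with $O_p\subseteq X_p\subseteq\sO$.

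The substantive point — and the step I expect to be the main obstacle — is the density of $\bigcup_{p}O_p$ in $X$. Since $\sO$ is dense in $X$, it suffices to show $\bigcup_p O_p$ is dense in $\sO$, equivalently that it is dense in each $X_p$, equivalently (since the $X_p$ partition $\sO$) that $\overline{\bigcup_p O_p} \supseteq \sO$. I would argue by downward induction on $p$ that $X_p \subseteq \overline{\bigcup_{q \ge p} O_q}$. For a maximal $p$ we have $X_p = U_p$ and $\bigcup_{q>p}X_q = \varnothing$, so $O_p = U_p = X_p$ and there is nothing to prove. For general $p$: by Lemma \ref{lem:sto_sp_basic} applied to the open sets $\bigcup_{q>p}X_q \subseteq U_p$, the set $U_p \setminus \overline{\bigcup_{q>p}X_q}$ has closure equal to $\overline{U_p}\setminus\overline{\bigcup_{q>p}X_q}$ — wait, more usefully, $V_p := U_p\setminus\overline{\bigcup_{q>p}X_q}$ is dense in $X_p = U_p \setminus \bigcup_{q>p}X_q$ because any nonempty relatively open subset of $X_p$ is a nonempty open subset of $U_p$ meeting $X_p$, and such a set cannot be contained in $\overline{\bigcup_{q>p}X_q}$ (an open set contained in $\overline{S}$ must meet $S$, but our set misses $\bigcup_{q>p}X_q$ by definition of $X_p$) — hence $\overline{V_p}\supseteq X_p$. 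Next I pass from $V_p$ to $O_p = V_p \setminus \bigcup_{q>p}\overline{O_q}$: the removed set is clopen, so by Lemma \ref{lem:top_sp_basic}-(ii), $\overline{O_p} = \overline{V_p} \setminus \bigcup_{q>p}\overline{O_q} \supseteq X_p \setminus \bigcup_{q>p}\overline{O_q}$. Finally, $X_p \cap \overline{O_q} \subseteq \overline{X_p}\cap\overline{X_q}$; I want to conclude that the part of $X_p$ cut away by the $\overline{O_q}$ is already in $\overline{\bigcup_{q>p}O_q}$, which is exactly the inductive hypothesis applied to each $q > p$. Putting it together, $X_p \subseteq \overline{O_p} \cup \bigcup_{q>p}\overline{O_q} \subseteq \overline{\bigcup_{q\ge p}O_q}$, completing the induction; taking the union over minimal $p$ (or all $p$) gives $\sO \subseteq \overline{\bigcup_p O_p}$, hence $\bigcup_p O_p$ is dense in $X$. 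The delicate bookkeeping is making sure the "removed" clopen pieces $\overline{O_q}$ only remove points of $X_p$ that are recaptured by the $O_q$ themselves, and the Stonean hypothesis (all these closures are clopen, so Lemma \ref{lem:top_sp_basic}-(ii) and Lemma \ref{lem:sto_sp_basic} apply) is what makes this work.
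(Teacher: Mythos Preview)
Your construction of the $O_p$'s is fine, and in fact coincides with the paper's: since each $O_q\subseteq X_q\subseteq \Lambda'_p:=\bigcup_{r>p}X_r$ for $q>p$, your set $W_p=\bigcup_{q>p}\overline{O_q}$ is contained in $\overline{\Lambda'_p}$, so subtracting $W_p$ from $V_p:=U_p\setminus\overline{\Lambda'_p}$ does nothing and $O_p=V_p=\Lambda_p\setminus\overline{\Lambda'_p}$, exactly the paper's choice. (In particular, the $W_p$-step is not needed for disjointness: $O_p\subseteq X_p$ and the $X_p$'s are pairwise disjoint, full stop.)

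The density argument, however, has a genuine gap. The intermediate claim ``$V_p$ is dense in $X_p$'' (equivalently $\overline{V_p}\supseteq X_p$) is \emph{false} in general. Take $\fP=\{0<1\}$, $X=\sO=\beta\N$, $\varphi(x)=1$ for $x\in\N$ and $\varphi(x)=0$ otherwise; this is Scott-continuous since $\varphi^{-1}\{1\}=\N$ is open. Then $\Lambda'_0=X_1=\N$, so $V_0=\beta\N\setminus\overline{\N}=\varnothing$, while $X_0=\beta\N\setminus\N\ne\varnothing$. Your justification slips between two different sets: the open set $G\subseteq U_p$ meeting $X_p$ (to which the principle ``open subset of $\overline{S}$ meets $S$'' applies) need not miss $\Lambda'_p$, whereas the set that \emph{does} miss $\Lambda'_p$, namely $G\cap X_p$, is not open.

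The repair is short and stays within your inductive scheme. Using the inductive hypothesis $X_q\subseteq\overline{\bigcup_{r\ge q}O_r}$ for all $q>p$, one gets $\Lambda'_p\subseteq\overline{\bigcup_{r>p}O_r}=W_p$; combined with $W_p\subseteq\overline{\Lambda'_p}$ (noted above) this gives $W_p=\overline{\Lambda'_p}$. Hence
\[
X_p\setminus W_p \;=\; X_p\setminus\overline{\Lambda'_p}\;\subseteq\; U_p\setminus\overline{\Lambda'_p}\;=\;V_p\;=\;O_p\;\subseteq\;\overline{O_p},
\]
which is exactly your Step~2, now proved without the false Step~1; your Step~3 and the final assembly then go through. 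The paper bypasses the induction entirely with a maximality argument: if $Q:=\overline{\bigcup_p O_p}\ne X$, pick $x\in\sO\setminus Q$ with $\varphi(x)$ maximal in $\varphi(\sO\setminus Q)$; from $x\notin O_{\varphi(x)}$ one forces $x\in\overline{\Lambda'_{\varphi(x)}}$, whence the open set $\sO\setminus Q$ meets $\Lambda'_{\varphi(x)}$, contradicting maximality.
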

\begin{proof}
For $p \in \fP$, we define  $\Lambda_p := \bigcup_{p \le q} X_q$,
$\Lambda'_p := \bigcup_{p < q} X_q$, and $O_p := \Lambda_p \backslash \overline{\Lambda'_p}$, with the convention that $\Lambda'_p := \varnothing$ if $p$ is a maximal element in $\fP$. Since every principal upper-set of $\fP$ is open in the Scott topology (see Definition \ref{def:scott_top}), for every $p \in \fP$, the subset $\{ q \in \fP :  p < q \} = \bigcup_{p < q} q^{\uparrow}$ is an open subset of $\fP$.

Let $p$ be any element of $\fP$. From the continuity of $\varphi$, the sets
\begin{align*}
\Lambda_p &= \bigcup_{p \le q} X_q = \bigcup_{p \le q} \varphi^{-1}\{q\}  = \varphi^{-1}\left(p^{\uparrow} \right),\\
\Lambda'_p &= \bigcup_{p < q} X_q = \bigcup_{p < q} \varphi^{-1}\{q\}  = \varphi^{-1} (\{ q \in \fP : p < q \}) = \varphi^{-1}\left(\bigcup_{p < q} q^\uparrow\right),
\end{align*}
are open subsets of $\sO$, and hence of $X$. Note that $O_p$ is an open subset of $X$, and since $X_p = \Lambda_p \backslash \Lambda'_p$, we have $O_p \subseteq X_p$. Moreover, since the $X_p$'s are mutually disjoint, the collection $\{O_p\}_{p\in \fP}$ consists of mutually disjoint open subsets of $X$, with
$$\bigcup_{p \in \fP} {O_p} \subseteq \bigcup_{p \in \fP} {X_p} = \sO.$$
Note that $Q := \overline{\bigcup_{p \in \fP} O_p}$ is a clopen subset of $X$. Below we show that $Q = X$, from which the desired assertion follows.

Let, if possible, $Q$ be a proper clopen subset of $X$, so that $\sO \backslash Q$ is a dense open subset of the non-empty clopen set $X \backslash Q$. Since $\fP$ is a finite poset, we may choose $x \in \sO \backslash Q$ such that $\varphi(x)$ is a maximal element of $\varphi(\sO\backslash Q) \subseteq \fP$. Clearly $x \in X_{\varphi(x)} \subseteq \Lambda_{\varphi(x)}$. At the same time, since $x \notin Q$, we have $x \notin O_{\varphi(x)} = \Lambda_{\varphi(x)} \backslash \overline{\Lambda'_{\varphi(x)}}$. Thus, $x$ must be contained in $\overline{\Lambda'_{\varphi(x)}}$. In particular, $
    \overline{\Lambda'_{\varphi(x)}} \cap (\sO \backslash Q) \neq \varnothing,$ whence from Lemma \ref{lem:top_sp_basic}-{(i)}, we have ${\Lambda'_{\varphi(x)}} \cap (\sO \backslash Q) \neq \varnothing.$
For any $y \in {\Lambda'_{\varphi(x)}} \cap (\sO \backslash Q) $, we have $\varphi(y) \in \varphi(\Lambda'_{\varphi(x)}) = \bigcup_{\varphi(x) < q} q^\uparrow$, which implies that $\varphi(x) < \varphi(y)$, contradicting the maximality of $\varphi(x)$ in $\varphi(\sO\backslash Q)$. We conclude that $Q = X$.
\end{proof}

\subsection{Vector-valued normal functions}
\label{subsec:vector_valued_normal_func}
The scope of normal functions is expanded here to include functions mapping into a finite-dimensional $C^*$-algebra $\fA$. A key finding is that the set of these $\fA$-valued normal functions defined on a Stonean space carries a natural $*$-algebra structure.

\begin{thm}
\label{thm:extension_theorem2}
\textsl{
Let $X$ be a Stonean space, $\sO$ be an open dense subset of $X$, and $Y$ be a locally compact Hausdorff space. Then, every continuous map $f:\sO \to Y$ has a unique maximal continuous extension, $f_{\mathsmaller{\sN}}$, relative to $X$, and the domain of $f_{\mathsmaller{\sN}}$ is an open dense subset of $X$.
}
\end{thm}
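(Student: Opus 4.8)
The plan is to reduce the general locally compact Hausdorff target $Y$ to the compact case handled by Theorem \ref{thm:extension_theorem1}, using the one-point compactification. First I would let $Y^+ = Y \cup \{\infty\}$ be the one-point compactification of $Y$ (a compact Hausdorff space, since $Y$ is locally compact Hausdorff), and regard $f : \sO \to Y$ as a continuous map $f : \sO \to Y^+$. Theorem \ref{thm:extension_theorem1} then gives a unique continuous extension $\widetilde f : \overline{\sO} \to Y^+$; since $\sO$ is dense in $X$, we have $\overline{\sO} = X$, so $\widetilde f$ is defined on all of $X$. Now set $U := \widetilde f^{\,-1}(Y) = X \setminus \widetilde f^{\,-1}\{\infty\}$, which is open in $X$ because $\{\infty\}$ is closed in $Y^+$, and define $f_{\mathsmaller{\sN}} := \widetilde f|_U : U \to Y$. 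This is a continuous extension of $f$ with open domain $U \supseteq \sO$.

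Next I would establish that $U$ is dense in $X$. This is immediate from $\sO \subseteq U$ and $\sO$ dense in $X$. Then I would check maximality: suppose $g : V \to Y$ is any continuous extension of $f$ with $V$ open in $X$ (or even just containing a dense subset on which it agrees with $f$); composing with the inclusion $Y \hookrightarrow Y^+$, both $\widetilde f|_{\overline V}$ (extending $g$ by Theorem \ref{thm:extension_theorem1} applied to $\overline V$, and noting uniqueness forces it to agree with the global $\widetilde f$ on $\overline V$) and $g$ agree with $f$ on the dense subset $\sO \cap V$ of $V$ — wait, more carefully: $g$ and $\widetilde f$ both continuously extend $f|_{\sO \cap V}$ over $V$, and $\sO \cap V$ is dense in $V$ (since $\sO$ is dense in $X$ and $V$ is open), so by the Hausdorff property of $Y^+$ they coincide on $V$; in particular $g$ never takes the value $\infty$, i.e. $V \subseteq U$, and $g = f_{\mathsmaller{\sN}}|_V$. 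This shows $f_{\mathsmaller{\sN}}$ is the unique maximal continuous extension relative to $X$.

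The one genuinely delicate point — and the step I expect to need the most care — is the interplay between "maximal continuous extension" and the passage through $Y^+$: one must make sure that allowing the extension to land in the compactification does not produce a spurious larger domain. The resolution is precisely the observation above that any competing extension $g$ agreeing with $f$ on a dense set must avoid $\infty$ by the uniqueness/agreement argument, so the "true" maximal $Y$-valued domain is exactly $U = \widetilde f^{\,-1}(Y)$. A secondary bookkeeping point is to confirm $\overline{\sO} = X$ (density of $\sO$) so that Theorem \ref{thm:extension_theorem1} indeed yields a map defined on all of $X$; this is given. Everything else — openness of $U$, continuity of the restriction, uniqueness — is routine point-set topology.
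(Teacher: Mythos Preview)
Your proposal is correct and follows essentially the same route as the paper: pass to the one-point compactification $\alpha Y$, apply Theorem~\ref{thm:extension_theorem1} to get a global extension $\widetilde f : X \to \alpha Y$, and take $f_{\mathsmaller{\sN}} := \widetilde f|_{\widetilde f^{-1}(Y)}$; maximality is then argued exactly as you do, via the density of $\sO$ and Hausdorffness of $\alpha Y$. The only minor streamlining is that since any continuous extension $g$ of $f$ has $\sO \subseteq \dom(g)$, you can drop the assumption that $\dom(g)$ is open and simply note that $\sO$ is dense in $\dom(g)$.
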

\begin{proof}
Let $\alpha Y$ denote the one-point compactification of $Y$. By Theorem \ref{thm:extension_theorem1}, there is a unique continuous function $\widetilde{f} : X \to \alpha Y$ which extends $f$. Since $Y$ is an open subset of $\alpha Y$, we note that $\widetilde{f}^{-1}(Y)$ is an open subset of $X$; furthermore, $\widetilde{f}^{-1}(Y)$ is a dense subset of $X$ as it contains $\sO$. Let $f_{\mathsmaller{\sN}}$ be the restriction of $\widetilde{f}$ to $\widetilde{f}^{-1}(Y)$, so that $\dom(f_{\mathsmaller{\sN}}) = \widetilde{f}^{-1}(Y)$. Since $\sO$ is dense in $X$, any continuous extension $g$ of $f$ with $\dom(g) \subseteq X$, must be equal to the restriction of $\widetilde{f}$ to $\dom(g)$, so that $\dom(g) \subseteq \dom(f_{\mathsmaller{\sN}})$. 
This shows that $f_{\mathsmaller{\sN}}$ is the unique maximal continuous extension of $f$ relative to $X$.
\end{proof}

\begin{definition}
\label{def:normal_func}
Let $X$ be a Stonean space and $Y$ be a locally compact Hausdorff space. Let $\sO$ be an open dense subset of $X$ and $f : \sO \to Y$ be a continuous function. We call the extension $f_{\mathsmaller{\sN}}$ of $f$, given in Theorem \ref{thm:extension_theorem2}, as the {\it normal extension} of $f$ relative to $X$. When the ambient space $X$ is clear from the context, we omit the phrase `relative to $X$'.

The function $f$ is said to be a {\it normal function} on $X$ (though $\dom(f)= \sO$ is really only dense in $X$) if $f$ is its own normal extension, that is, $f_{\mathsmaller{\sN}} = f$. In order to emphasize the codomain space $Y$, we will often refer to such $f$ as a {\it $Y$-valued normal function}.

We denote the set of all $Y$-valued normal functions on $X$ by $\sN(X;Y)$. When $Y = \C$, we write $\sN(X)$ to denote the set of all (complex-valued) normal functions on $X$.
\end{definition}

The usage of the adjective `normal' stems from its traditional usage in the context of $Y = \C$ in the study of the spectral theorem for unbounded normal operators on Hilbert space (see \cite[\S 5.6]{KR-I}).

\begin{prop}
\label{prop:clopen_split}
\textsl{
Let $X$ be a Stonean space, $(\sB, \|\cdot\|)$ be a finite-dimensional normed linear space, and $f$ be a $\sB$-valued continuous function defined on an open dense subset of $X$. For $n \in \N$, let $$O_{n} := \{ x \in \dom(f) : \|f(x)\| < n \}.$$ Then $f \in \sN(X; \sB)$ if and only if $ ~ \dom(f) = \bigcup_{n \in \N}\overline{O_n}$.
}
\end{prop}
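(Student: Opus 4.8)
The plan is to prove both implications by leveraging Theorem \ref{thm:extension_theorem2} with $Y = \sB$, and the Stonean extension machinery. First, for the forward direction, suppose $f \in \sN(X; \sB)$, so $f = f_{\mathsmaller{\sN}}$ and $\dom(f)$ is an open dense subset of $X$. Each $O_n = \{x \in \dom(f) : \|f(x)\| < n\}$ is open (preimage of an open ball under the continuous $f$), and clearly $\dom(f) = \bigcup_n O_n$ since every point of $\dom(f)$ has finite norm value; hence $\bigcup_n \overline{O_n} \subseteq \overline{\dom(f)}$ and also $\bigcup_n \overline{O_n} \supseteq \bigcup_n O_n = \dom(f)$. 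The crux is the reverse inclusion $\bigcup_n \overline{O_n} \subseteq \dom(f)$: I must show that if $x \in \overline{O_n}$ for some $n$, then $x \in \dom(f)$. Here I would use that $\overline{O_n}$ is clopen in $X$ (since $X$ is Stonean), and that $f$ restricted to $O_n$ is a \emph{bounded} continuous $\sB$-valued function, so it extends continuously to the compact set $\overline{O_n}$ by Theorem \ref{thm:extension_theorem1} (the closed ball $\{\|v\| \le n\}$ is compact). This extension $g$ agrees with $f$ on the dense subset $O_n$ of $\overline{O_n}$; I then need to argue $g$ agrees with $f_{\mathsmaller{\sN}} = f$ wherever both are defined, and that maximality of $f_{\mathsmaller{\sN}}$ forces $\overline{O_n} \subseteq \dom(f_{\mathsmaller{\sN}})$ — indeed $f$ extended to $\dom(f) \cup \overline{O_n}$ by gluing $g$ is still a continuous extension of the original function on the dense set, so by maximality $\overline{O_n} \subseteq \dom(f)$.

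For the converse, suppose $\dom(f) = \bigcup_{n \in \N} \overline{O_n}$; I must show $f$ is its own normal extension, i.e., $f_{\mathsmaller{\sN}} = f$, equivalently $\dom(f_{\mathsmaller{\sN}}) \subseteq \dom(f)$ (the reverse inclusion and agreement being automatic). Let $\alpha\sB$ be the one-point compactification, $\widetilde f : X \to \alpha\sB$ the unique continuous extension of $f$ from $\dom(f)$ (dense in $X$), so $\dom(f_{\mathsmaller{\sN}}) = \widetilde f^{-1}(\sB)$. Take $x \in \widetilde f^{-1}(\sB)$; then $\widetilde f(x) \in \sB$, so $\|\widetilde f(x)\| < m$ for some $m$, and $W := \widetilde f^{-1}(\{v \in \sB : \|v\| < m\})$ is an open neighbourhood of $x$. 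Since $\dom(f)$ is dense in $X$, $W \cap \dom(f)$ is dense in $W$, and on this set $\widetilde f = f$ has norm $< m$, so $W \cap \dom(f) \subseteq O_m$; hence $x \in \overline{W \cap \dom(f)} \subseteq \overline{O_m} \subseteq \dom(f)$, using that $W$ is open so $x$ is a limit point of any set dense in $W$. This gives $\dom(f_{\mathsmaller{\sN}}) \subseteq \dom(f)$, so $f = f_{\mathsmaller{\sN}}$ and $f \in \sN(X;\sB)$.

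The main obstacle I anticipate is the bookkeeping in the forward direction: carefully justifying that the bounded piece $f|_{O_n}$ extends to $\overline{O_n}$ \emph{consistently} with the global normal extension, and then invoking the maximality property correctly — one must check that the glued function on $\dom(f) \cup \overline{O_n}$ is genuinely continuous (using that $\overline{O_n}$ is clopen, so the two pieces are defined on sets that overlap openly and the gluing lemma, \cite[Theorem 18.2(f)]{munkres}, applies) and is still an extension of $f|_{\sO}$ for the original dense-domain representative $\sO$. A secondary technical point is making sure $\overline{O_n}$ is the \emph{closure in $X$} and that its clopenness (Stonean property) is what lets the ball-valued extension live on a compact clopen set; the compactness of the closed ball $\{\|v\|\le n\}$ in the finite-dimensional $\sB$ is essential and is where finite-dimensionality of $\sB$ enters.
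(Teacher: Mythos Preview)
Your proposal is correct and follows essentially the same approach as the paper: in the forward direction you, like the paper, extend each bounded piece $f|_{O_n}$ to the clopen set $\overline{O_n}$ via Theorem~\ref{thm:extension_theorem1} (using compactness of closed balls in the finite-dimensional $\sB$) and invoke maximality of $f_{\mathsmaller{\sN}}$; in the converse you, like the paper, argue that any point in the domain of an extension has a neighbourhood on which the values have norm below some $m$, forcing the point into $\overline{O_m}\subseteq\dom(f)$. The only cosmetic differences are that the paper glues all the $\overline{O_n}$'s at once into a single extension (rather than one $n$ at a time), and in the converse works with an arbitrary extension $g$ and appeals to Lemma~\ref{lem:sto_sp_basic} and Lemma~\ref{lem:top_sp_basic}-(i), whereas your density argument via the one-point compactification $\widetilde f$ sidesteps those lemmas.
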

\begin{proof}
Note that in a finite-dimensional normed space, closed balls are compact, so that $\sB$ is a locally compact Hausdorff space, and $ \sN(X; \sB)$ makes sense. It is clear that $\dom(f) \subseteq \bigcup_{n \in \N}\overline{O_n}$. Below we show $f \in \sN(X; \sB)$ if and only if $ ~ \bigcup_{n \in \N}\overline{O_n} \subseteq  \dom(f)$.

\medskip

\noindent $(\Longrightarrow)$
Let $B(0, n)$ denote the open ball of radius $n$ centred at $0$ in $\sB$. Clearly $O_n  = f^{-1}\big(B(0, n)\big)$ is open in $\dom(f)$ and hence in $X$. Since $\overline{B(0,n)}$ is a compact Hausdorff space, by Theorem \ref{thm:extension_theorem1}, there is a unique continuous extension of $f|_{O_n}$, $\widetilde{f_n}$, to the clopen subset $\overline{O_n}$ taking values in $\overline{B(0, n)}$. From the uniqueness, it is clear that for $m < n$, $\widetilde{f}_m$ is the restriction of $\widetilde{f}_{n}$ to $\overline{O_m}$. Thus, there is a well-defined continuous function $\widetilde{f}$ on $\bigcup_{n\in \N}\overline{O_n}$ defined as $\widetilde{f}(x) = \widetilde{f}_n(x)$ for $x \in \overline{O_n}$. Clearly, $\dom(f) = \bigcup_{n \in \N} O_n \subseteq \bigcup_{n\in \N}\overline{O_n} = \dom(\widetilde{f}).$ Thus, $\widetilde{f}$ is a continuous extension of $f$, and as $f$ is a normal function, we have $\dom(f) \supseteq \dom(\widetilde{f}) \supseteq \bigcup_{n\in \N}\overline{O_n}$.

\medskip

\noindent $(\Longleftarrow)$ Let $g$ be a continuous extension of $f$ and $x \in \dom(g)$. Let $k$ be a positive integer strictly larger than $\|g(x)\|$. There is a neighborhood $U_x$ of $x$ in $X$ such that $\|g(x)\| < k$ for all $x \in U_x \cap \dom(g)$. Thus, $U_x \cap (\dom(f) \backslash \overline{O_{k}}) \subseteq U_x \cap (\dom(g) \backslash \overline{O_{k}}) = \varnothing$. From Lemma \ref{lem:sto_sp_basic}, we note that the closure of the open set $\dom(f) \backslash \overline{O_k}$ is $X \backslash \overline{O_k}$. Thus, from Lemma \ref{lem:top_sp_basic}-{(i)}, $U_x \cap (X \backslash \overline{O_{k}}) = \varnothing$, so that $U_x \subseteq \overline{O_k} \subseteq \dom(f)$. In particular $x \in \dom(f)$, whence $\dom(g) = \dom(f)$. In particular, $f = f_{\mathsmaller{\sN}}$ and hence $f \in \sN(X; \sB)$.
\end{proof}

\begin{rem}
In view of Proposition \ref{prop:clopen_split} above, the reader may verify that the definition of normal functions in \cite[Definition 1.1]{algebras_of_unbounded_functions_and_operators} is equivalent to our definition in the case when $\sB = \C$ with the absolute value, $|\cdot|$, as the norm. 
\end{rem}

Let $\fA$ be a finite-dimensional $C^*$-algebra. Clearly, $\fA$ is a locally compact Hausdorff space. Let $\sO(X; \fA)$ be the set of $\fA$-valued continuous functions whose domains are open dense subsets of $X$; note that $\sN(X; \fA) \subseteq \sO(X; \fA)$. Since the intersection of two open dense subsets of a topological space is again an open dense subset, we may endow $\sO(X; \fA)$ with two binary operations $+, \cdot,$ and an involution, $*$, as follows:
\begin{itemize}
    \item[(i)] For $f_1, f_2 \in \sO(X; \fA)$,
    $\dom(f_1 + f_2) = \dom(f_1) \cap \dom(f_2)$ and $(f_1 + f_2)(x) = f_1(x) + f_2(x)$ for $x \in \dom(f_1) \cap \dom(f_2)$;
    \item[(ii)] For $f_1, f_2 \in \sO(X; \fA)$, $\dom(f_1 \cdot f_2) = \dom(f_1) \cap \dom(f_2)$ and $(f_1 \cdot f_2)(x) = f_1(x)  f_2(x)$ for $x \in \dom(f_1) \cap \dom(f_2)$;
    \item[(iii)] For $f \in \sO(X; \fA)$, $\dom(f^*) = \dom(f)$ and $f^*(x) = {f(x)}^*$ for $x \in \dom(f)$;
    \item[(iv)] For $f \in \sO(X; \fA)$ and $\lambda \in \C$, $\dom(\lambda f) = \dom(f)$ and $(\lambda f)(x) = \lambda f(x)$ for $x \in \dom(f)$.
\end{itemize}
It is easily verfied that the binary operations, $+, \cdot$ are commutative and associative. 

\begin{definition}
We define an equivalence relation, $\sim_{\mathsmaller{\sO}}$, on $\sO(X; \fA)$ by stipulating $f \sim_{\mathsmaller{\sO}} g$, for $f, g \in \sO(X; \fA)$, if and only if $f$ and $g$ have identical normal extensions, that is, $f_{\mathsmaller{\sN}} = g_{\mathsmaller{\sN}}$.
\end{definition}

\begin{lem}
\label{lem:sim_O}
\textsl{
Let $X$ be a Stonean space, and $\fA$ be a finite-dimensional $C^*$-algebra. Let $f_1, f_2 \in \sO(X; \fA)$. Then $f_1 \sim_{\mathsmaller{\sO}} f_2$ if and only if the restrictions of $f_1, f_2$, to the dense open dense subset, $\dom(f_1) \cap \dom(f_2)$, are identical. 
}
\end{lem}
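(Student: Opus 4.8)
The plan is to prove both directions by exploiting the density of the relevant open sets together with the uniqueness clause in the Extension Theorem (Theorem~\ref{thm:extension_theorem2}), which asserts that a continuous map on an open dense subset of a Stonean space has a \emph{unique} maximal continuous extension. Write $\sO_{12} := \dom(f_1) \cap \dom(f_2)$, which is an open dense subset of $X$ since it is the intersection of two open dense subsets.

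For the forward direction, suppose $f_1 \sim_{\mathsmaller{\sO}} f_2$, i.e. $(f_1)_{\mathsmaller{\sN}} = (f_2)_{\mathsmaller{\sN}} =: h$. Since $f_i$ agrees with $h$ on $\dom(f_i) \supseteq \sO_{12}$, both $f_1$ and $f_2$ restrict to $h|_{\sO_{12}}$ on $\sO_{12}$, so their restrictions to $\sO_{12}$ coincide. For the converse, suppose $f_1|_{\sO_{12}} = f_2|_{\sO_{12}} =: g$. I would first observe that $g$ is a continuous function on the open dense subset $\sO_{12}$ of $X$, so by Theorem~\ref{thm:extension_theorem2} it has a maximal continuous extension $g_{\mathsmaller{\sN}}$. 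The key step is to argue that $g_{\mathsmaller{\sN}} = (f_1)_{\mathsmaller{\sN}}$: indeed, $f_1$ is a continuous extension of $g$ (as $f_1$ restricts to $g$ on $\sO_{12}$ and $\dom(f_1)$ is open dense in $X$), and by the maximality/uniqueness in Theorem~\ref{thm:extension_theorem2}, any continuous extension of $g$ agrees with $g_{\mathsmaller{\sN}}$ on its domain; since $(f_1)_{\mathsmaller{\sN}}$ is itself a continuous extension of $g$ with maximal domain among extensions of $f_1$, and hence of $g$, one gets $(f_1)_{\mathsmaller{\sN}} = g_{\mathsmaller{\sN}}$. The symmetric argument gives $(f_2)_{\mathsmaller{\sN}} = g_{\mathsmaller{\sN}}$, whence $(f_1)_{\mathsmaller{\sN}} = (f_2)_{\mathsmaller{\sN}}$, i.e. $f_1 \sim_{\mathsmaller{\sO}} f_2$.

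The only point requiring a little care---and the closest thing to an obstacle---is making precise the comparison between the two notions of ``maximal extension'': one should note that a continuous extension of $f_1$ (to an open dense subset) is in particular a continuous extension of $g = f_1|_{\sO_{12}}$, so the uniqueness statement in Theorem~\ref{thm:extension_theorem2} applied to $g$ forces $(f_1)_{\mathsmaller{\sN}}$ and $g_{\mathsmaller{\sN}}$ to agree on the overlap of their domains, and then a maximality comparison (each dominates the other) yields equality of domains and hence of the functions. This is entirely routine given Theorem~\ref{thm:extension_theorem2}, so the lemma follows immediately.
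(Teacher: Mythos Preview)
Your proof is correct and follows essentially the same approach as the paper's: both directions are handled via the uniqueness of the normal extension from Theorem~\ref{thm:extension_theorem2}, with the converse reduced to showing $(f_1)_{\mathsmaller{\sN}} = g_{\mathsmaller{\sN}} = (f_2)_{\mathsmaller{\sN}}$ for $g = f_1|_{\sO_{12}} = f_2|_{\sO_{12}}$. The paper states this last equality in one line, whereas you spell out the maximality comparison; your extra care is justified but not strictly needed beyond what Theorem~\ref{thm:extension_theorem2} already provides.
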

\begin{proof}
Let $f_1, f_2 \in \sO(X; \fA)$, with $\sO_1 = \dom(f_1)$ and $\sO_2 = \dom(f_2)$. If $f_1 \sim_{\mathsmaller{\sO}} f_2$, then by definition they have a common normal extension, $f$, and hence $$f_1|_{\sO_1 \cap \sO_2} = f|_{\sO_1 \cap \sO_2} = f_2|_{\sO_1 \cap \sO_2}.$$

Conversely, let $g := f_1|_{\sO_1 \cap \sO_2} = f_2|_{\sO_1 \cap \sO_2}$. By the uniqueness of normal extension, $(f_1)_{\mathsmaller{\sN}} = g_{\mathsmaller{\sN}} = (f_2)_{\mathsmaller{\sN}}$ (see Theorem \ref{thm:extension_theorem2}). In other words, $f_1 \sim_{\mathsmaller{\sO}} f_2$.
\end{proof}

\begin{cor}\label{cor:*-alg_eq_cls}
\textsl{
Let $X$ be a Stonean space, and $\fA$ be a finite-dimensional $C^*$-algebra. Let $f_1, f_2, g_1, g_2 \in \sO(X; \fA)$ such that $f_1 \sim_{\mathsmaller{\sO}} f_2$ and $g_1 \sim_{\mathsmaller{\sO}} g_2$. Then we have
\vskip0.05in
\begin{tasks}[label={},item-format={\normalfont}, after-item-skip=2mm](3)
    \task ~(i) $f_1 + g_1 \sim_{\mathsmaller{\sO}} f_2 + g_2$ ;
    \task (ii) $ f_1 \cdot g_1 \sim_{\mathsmaller{\sO}} f_2 \cdot g_2$ ;
    \task (iii) ${f_1}^* \sim_{\mathsmaller{\sO}} {f_2}^*$ ;
    \task (iv) $\lambda f_1 \sim_{\mathsmaller{\sO}} \lambda f_2$ ;
    \task (v) $f_1 + (-f_1) \sim_{\mathsmaller{\sO}} {\bf 0}$.
\end{tasks}
\vskip0.05in
Thus, $\sO(X; \fA)/\sim_{\mathsmaller{\sO}}$ forms a unital $*$-algebra with the everywhere-defined constant functions ${\bf 0, 1}$ serving as the zero, multiplicative identity, respectively.
}
\end{cor}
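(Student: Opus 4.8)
The plan is to deduce all five congruences, and with them the $*$-algebra structure, directly from Lemma \ref{lem:sim_O}, which characterizes $\sim_{\mathsmaller{\sO}}$ by agreement on the (dense, open) intersection of domains. The guiding observation is that whenever we form one of the expressions $f_1+g_1$, $f_1\cdot g_1$, ${f_1}^*$, $\lambda f_1$ (and likewise with the subscript $2$), the relevant domains are finite intersections of $\dom(f_1), \dom(f_2), \dom(g_1), \dom(g_2)$, hence dense open subsets of $X$ on which the four functions are all simultaneously defined. On any such set we have $f_1=f_2$ and $g_1=g_2$: this is exactly Lemma \ref{lem:sim_O} applied to the hypotheses $f_1\sim_{\mathsmaller{\sO}}f_2$ and $g_1\sim_{\mathsmaller{\sO}}g_2$. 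So each claim reduces to a one-line pointwise identity.

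Concretely, set $D:=\dom(f_1)\cap\dom(f_2)\cap\dom(g_1)\cap\dom(g_2)$, a dense open subset of $X$ by the remark preceding the statement. For (i), one checks $\dom(f_1+g_1)\cap\dom(f_2+g_2)=D$ and, for $x\in D$, $(f_1+g_1)(x)=f_1(x)+g_1(x)=f_2(x)+g_2(x)=(f_2+g_2)(x)$; Lemma \ref{lem:sim_O} then yields $f_1+g_1\sim_{\mathsmaller{\sO}}f_2+g_2$. Claims (ii) and (iv) are proved by the identical template, replacing the pointwise sum by the pointwise product $f_1(x)g_1(x)=f_2(x)g_2(x)$ and by $\lambda f_1(x)=\lambda f_2(x)$ respectively. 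For (iii) one uses $\dom({f_1}^*)\cap\dom({f_2}^*)=\dom(f_1)\cap\dom(f_2)$ together with ${f_1}^*(x)=f_1(x)^*=f_2(x)^*={f_2}^*(x)$ there. Claim (v) is immediate: $f_1+(-f_1)$ is the constant function ${\bf 0}$ on $\dom(f_1)$, so $f_1+(-f_1)\sim_{\mathsmaller{\sO}}{\bf 0}$ by Lemma \ref{lem:sim_O}.

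To assemble the $*$-algebra statement: by (i)--(iv) the operations $+,\cdot,*$ and scalar multiplication pass to well-defined operations on the quotient $\sO(X;\fA)/\sim_{\mathsmaller{\sO}}$, i.e.\ are independent of the chosen representatives. The algebra and involution axioms — commutativity and associativity of $+$, associativity of $\cdot$, the distributive laws, $(fg)^*=g^*f^*$, $f^{**}=f$, $(\lambda f)^*=\overline{\lambda}f^*$, etc.\ — already hold at the level of $\sO(X;\fA)$ because they hold pointwise in $\fA$ (this was noted for $+,\cdot$ just above the statement), and they descend to the quotient. The everywhere-defined constant functions ${\bf 0}$ and ${\bf 1}$ (taking values $0_\fA$ and $1_\fA$) satisfy ${\bf 0}+f\sim_{\mathsmaller{\sO}}f$ and ${\bf 1}\cdot f\sim_{\mathsmaller{\sO}}f$ for every $f\in\sO(X;\fA)$, so their classes are the zero and multiplicative identity, while (v) supplies additive inverses. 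Hence $\sO(X;\fA)/\sim_{\mathsmaller{\sO}}$ is a unital $*$-algebra.

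I expect no genuine obstacle here: the entire argument is domain bookkeeping on top of Lemma \ref{lem:sim_O}. The only point that warrants a moment's care is confirming, in each of (i)--(v), that the intersection of the domains of the two sides is precisely $D$ (or, for (iii), $\dom(f_1)\cap\dom(f_2)$), so that Lemma \ref{lem:sim_O} applies verbatim without any auxiliary density argument.
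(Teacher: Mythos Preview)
Your proposal is correct and follows exactly the paper's approach: the paper's proof reads, in its entirety, ``The assertions follow from straightforward applications of Lemma \ref{lem:sim_O}.'' You have simply spelled out those straightforward applications.
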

\begin{proof}
The assertions follow from straightforward applications of Lemma \ref{lem:sim_O}.
\end{proof}

\begin{rem}
\label{rem:normal_functions}
From Corollary \ref{cor:*-alg_eq_cls}, it follows that $\sO(X; \fA)/\sim_{\mathsmaller{\sO}}$ is a $*$-algebra.
Using the natural one-to-one correspondence between normal functions in $\sN(X; \fA)$ and $\sim_\sO$ equivalence classes in $\sO(X; \fA)$ (via normal extensions), we may identify $\sN(X; \fA)$ with $\sO(X; \fA)/\sim_{\mathsmaller{\sO}}$ and also treat it as a $*$-algebra. The set of elements in $\sN(X; \fA)$ with domain $X$ is precisely $C(X; \fA)$.
\end{rem}

\begin{rem}
\label{rmrk:norm_iso}
Let $X$ be a Stonean space. Let $A \in \sO\big(X; M_n(\C)\big)$ so that $\dom(A)$ is an open dense subset of $X$. 
For $x \in \dom(A)$ and $i,j \in [n]$, let $a_{ij}(x)$ be the $(i,j)^{\textrm{th}}$ entry of the matrix $A(x) \in M_n(\C)$. Clearly $a_{ij} : \dom(A) \to \C$, defined by $x \mapsto a_{ij}(x)$, is a continuous function, and thus belongs to $\sO(X;\C)$. This gives us a natural mapping 
$$\varphi : \sO\big(X; M_n(\C)\big) \to M_n\big(\sO(X;\C)\big) \textnormal{ defined by }A \mapsto [a_{ij}]_{i,j=1}^n.$$ 
Note that $\varphi$ respects $+, \cdot, *$. Moreover, if $A \sim_{\mathsmaller{\sO}} A'$ in $\sO\big(X; M_n(\C)\big)$, then $$a_{ij} = [\varphi(A)]_{ij} \sim_{\mathsmaller{\sO}} [\varphi(A')]_{ij} = a_{ij}',$$ in $\sO(X;\C)$ for $1 \le i,j \le n$. Passing to the equivalence classes (see Remark \ref{rem:normal_functions}), we get a unital $*$-homomorphism 
$$\widetilde{\varphi} : \sN\big(X; M_n(\C)\big) \to M_n\big(\sN(X)\big).$$ 

In the other direction, consider a matrix $[a_{ij}]_{i,j=1}^n$ in $M_n\big(\sO(X;\C)\big)$ and let $\sO' = \bigcap_{i,j=1}^n \dom(a_{ij})$. Note that $\sO'$ is an open dense subset of $X$. Clearly, $A : \sO' \to M_n(\C)$, defined by $A(x) = [a_{ij}(x)]_{i,j=1}^n$, is a continuous function, and thus belongs to $\sO\big(X; M_n(\C)\big)$. Thus, we have a natural mapping 
$$\psi : M_n\big(\sO(X;\C)\big) \to \sO\big(X; M_n(\C)\big) \textnormal{ defined by } [a_{ij}]_{i,j=1}^n \mapsto A.$$
Note that $\psi$ respects $+, \cdot, *$. Moreover, for $1 \le i, j \le n$, if $a_{ij}'$ is a collection of functions in $\sO(X;\C)$ such that $a_{ij} \sim_{\mathsmaller{\sO}} a_{ij}'$ in $\sO(X;\C)$ for all $i,j \in [n]$, then $$A = \psi\big( [a_{ij}]_{i,j=1}^n\big) \sim_{\mathsmaller{\sO}} \psi\big( [a'_{ij}]_{i,j=1}^n\big) = A',$$ in $\sO\big(X; M_n(\C)\big)$. Passing to the equivalence classes, we get a unital $*$-homomorphism, $$\widetilde{\psi} : \sN\big(X; M_n(\C)\big) \to M_n\big(\sN(X)\big).$$ 

The reader may verify that $\widetilde{\varphi}$ and $\widetilde{\psi}$ are in fact inverses of each other. We conclude that $\sN\big(X; M_n(\C)\big)$ and $M_n\big(\sN(X)\big)$ are $*$-isomorphic in a natural manner. We will often view these $*$-algebras interchangeably.
\end{rem}

In \cite[Theorem 2]{deckard_pearcy}, Deckard and Pearcy proved that for a Stonean space $X$, every matrix in $M_n\big(C(X)\big)$ may be transformed into upper-triangular form via unitary conjugation. Making essential use of this result, below we prove that every matrix in $M_n\big(\sN(X)\big)$ may be transformed into upper triangular form via unitary conjugation.

\begin{thm}
\label{thm:cts_upper-tri}
\textsl{
Let $X$ be a Stonean space, and $A \in M_n\big(\sN(X)\big)$. Then there is a unitary matrix $V \in M_n\big(C(X)\big)$ such that $B = V^*A V$ lies in $UT_n\big(\sN(X)\big)$.
}
\end{thm}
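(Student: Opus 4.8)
The plan is to reduce the statement to the bounded case proved by Deckard and Pearcy \cite[Theorem 2]{deckard_pearcy}, by slicing $X$ into clopen pieces on which $A$ is bounded, unitarily triangularizing on each piece, and then gluing the resulting unitaries together — a step that succeeds precisely because the unitary matrices form a compact set, whereas $A$ itself is unbounded.

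First I would regard $A$ as an $M_n(\C)$-valued normal function via the identification of Remark \ref{rmrk:norm_iso}, writing $\sO := \dom(A)$, an open dense subset of $X$. For $k \in \N$ put $O_k := \{ x \in \sO : \|A(x)\| < k \}$; each $O_k$ is open, so $\overline{O_k}$ is clopen, the family $\{\overline{O_k}\}_{k}$ is increasing, and — since $A$ is normal — Proposition \ref{prop:clopen_split} (applied with the finite-dimensional normed space $M_n(\C)$) gives $\sO = \bigcup_{k} \overline{O_k}$. Setting $P_1 := \overline{O_1}$ and $P_k := \overline{O_k} \setminus \overline{O_{k-1}}$ for $k \ge 2$ yields mutually disjoint clopen subsets of $X$ with $\bigcup_{k} P_k = \sO$, dense in $X$. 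Each $P_k$ is a clopen subspace of a Stonean space, hence is itself compact and Stonean, and $P_k \subseteq \sO$, so $A|_{P_k}$ is an everywhere-defined continuous $M_n(\C)$-valued function on $P_k$, i.e. a member of $M_n\big(C(P_k)\big)$.

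Next, I would apply \cite[Theorem 2]{deckard_pearcy} on each Stonean space $P_k$ to obtain a unitary $V_k \in M_n\big(C(P_k)\big)$ with $V_k^{\,*}\,(A|_{P_k})\,V_k \in UT_n\big(C(P_k)\big)$. As the $P_k$ are pairwise disjoint open subsets of $X$, the gluing lemma (as used in the proof of Proposition \ref{prop:inf_st_sp}) gives a continuous map $V$ on $\bigcup_{k} P_k = \sO$ with $V|_{P_k} = V_k$, taking values in the compact set $\mathcal{U}_n$ of $n \times n$ unitary matrices. Here is the key point: since $\mathcal{U}_n$ is compact Hausdorff, the Extension Theorem \ref{thm:extension_theorem1} furnishes a continuous extension $\widetilde{V} : X = \overline{\sO} \to \mathcal{U}_n$, hence a unitary $\widetilde{V} \in C\big(X; M_n(\C)\big) \cong M_n\big(C(X)\big)$. (By contrast $A$ admits no such extension; this is exactly the unboundedness phenomenon, and it is why one only ever extends the unitaries, never $A$.)

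Finally I would set $B := \widetilde{V}^{\,*} A\, \widetilde{V}$, a member of $\sO\big(X; M_n(\C)\big)$ with $\dom(B) = \sO$ since $\widetilde{V}$ has full domain. On each $P_k$, $B|_{P_k} = V_k^{\,*}(A|_{P_k})V_k$ is upper-triangular; as the $P_k$ cover $\sO$, every below-diagonal entry of $B$ vanishes identically on $\sO$, hence represents the zero element of $\sN(X)$. To see that $B$ is itself normal, note that unitary conjugation preserves the operator norm, so $\|B(x)\| = \|A(x)\|$ for $x \in \sO$; the sublevel sets of $\|B\|$ are again the $O_k$, and $\dom(B) = \sO = \bigcup_k \overline{O_k}$, whence Proposition \ref{prop:clopen_split} gives $B \in \sN\big(X; M_n(\C)\big)$. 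Translating back via Remark \ref{rmrk:norm_iso}, $B$ is a matrix in $M_n\big(\sN(X)\big)$ whose below-diagonal entries are zero, i.e. $B \in UT_n\big(\sN(X)\big)$; taking $V := \widetilde{V}$ completes the argument. The only genuine obstacle is the one flagged above: Deckard–Pearcy requires bounded continuous matrix functions, so one cannot triangularize $A$ in a single step, and the reassembly of the local unitaries works only because the gluing data lives in the compact group $\mathcal{U}_n$ — a feature of unitary, as opposed to arbitrary invertible, triangularization — and because conjugation by a full-domain unitary function preserves both the domain and the pointwise norm, hence normality via Proposition \ref{prop:clopen_split}.
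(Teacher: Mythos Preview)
Your proof is correct and follows essentially the same approach as the paper: slice $\dom(A)$ into the clopen pieces $\overline{O_k}\setminus\overline{O_{k-1}}$, apply Deckard--Pearcy on each, glue the local unitaries, and extend to all of $X$ by compactness of the target. The only cosmetic differences are that the paper extends $V$ using the unit ball of $M_n(\C)$ rather than $\mathcal{U}_n$ as the compact target (either works), and the paper simply asserts $B\in\sN\big(X;UT_n(\C)\big)$ by appealing to the $*$-algebra structure on $\sN\big(X;M_n(\C)\big)$ from Remark~\ref{rem:normal_functions}, whereas you verify normality of $B$ explicitly via Proposition~\ref{prop:clopen_split}; your extra care here is harmless and arguably clearer.
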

\begin{proof}
With Remark \ref{rmrk:norm_iso} in mind, we view $A$ as an element of $\sN\big(X; M_n(\C)\big)$. For $k \in \N \cup\{0\}$, let
$$O_k := \{ x \in \dom(A) : \|A(x)\| < k \}.$$ 
By Proposition \ref{prop:clopen_split}, $\dom(A) = \bigcup_{k \in \N} \overline{O_k}$. For $k \in \N$, let $X_k := \overline{O_{k}} \backslash \overline{O_{k-1}}$. Then $X_k$ is a family of mutually disjoint clopen subsets of $X$ with $\bigcup_{k \in \N} X_k = \dom(A)$; in particular, the $X_k$'s are Stonean spaces. 
From \cite[Theorem 2]{deckard_pearcy}, and identifying the $AW^*$ algebras $M_n\big(C(X_k)\big)$ and $C\big(X_k;M_n(\C)\big)$ (see Remark \ref{rmrk:cts_iso}), there is a unitary element $V_k \in C\big(X_k; M_n(\C)\big)$ such that $B_k = V_k^* A|_{X_k} V_k$ lies in $C\big(X_k; UT_n(\C)\big)$. From the gluing lemma (see \cite[Theorem 18.2(f)]{munkres}), the mapping, 
$$V : \dom(A) \to M_n(\C) \textnormal{ given by } V(x) = V_k(x) \textnormal{ for } x \in X_k,$$
is continuous. Since $V(x)$ is a unitary matrix for each $x \in \dom(A)$, the range of $V$ lies in the unit ball of $M_n(\C)$ which is a compact Hausdorff space. Thus, by Theorem \ref{thm:extension_theorem1}, there is a unique continuous extension, $\widetilde{V}$, of $V$ on $\overline{\dom(A)} = X$. In other words, $\widetilde{V} \in C(X; M_n\big(\C) \big)$. Clearly, $B = \widetilde{V}^* A \widetilde{V}$ lies in $\sN(X;  UT_n\big(\C) \big)$, and thus may be viewed as an element of $UT_n\big(\sN(X)\big)$ using Remark \ref{rmrk:norm_iso}.
\end{proof}


\section{Jordan-Chevalley decomposition for matrices over \texorpdfstring{$\sN(X)$}{N(X)}}\label{sec:JCD_AW^*}

\label{sec:JCD_N_X}

In this section, we prove a key result of this article, Theorem \ref{thm:main1}, which asserts that for a Stonean space $X$, a matrix in $M_n\big(\sN(X)\big)$ admits a unique Jordan-Chevalley decomposition (see Definition \ref{def:JCD}). Moreover, we note that for an $n \times n$ matrix over the subring $C(X)$ of $\sN(X)$, the diagonalizable/nilpotent parts need not lie in $M_n\big(C(X)\big)$.

\medskip

Let $n_1, \ldots, n_k$ be positive integers such that $n_1 + \cdots + n_k = n$, and let $\lambda_1, \ldots, \lambda_k$ be complex numbers.
For $1 \le i \le k$, let $N_i$ be a strictly upper-triangular matrix in $M_{n_i}(\C)$, which is clearly nilpotent. It is straightforward to see that $A_i := \lambda_i I_{n_i} + N_i$ is an upper-triangular matrix in $M_{n_i}(\C)$ whose diagonalizable and nilpotent parts (in its Jordan-Chevalley decomposition) are $\lambda_ i I_{n_i}$ and $ N_i$, respectively. Thus, the diagonalizable and nilpotent parts of the matrix $\mathsmaller{\bigoplus}_{i=1}^k A_i$ in $M_n(\C)$ are given by $\mathsmaller{\bigoplus}_{i=1}^k \lambda _i I_{n_i}$ and $\mathsmaller{\bigoplus}_{i=1}^k N_i$, respectively. For such matrices, the Jordan-Chevalley decomposition is readily available without the need for any computation. 

\begin{definition}
\label{def:good_ut_mat}
We say that an upper-triangular matrix $A = [a_{ij}]_{i,j=1}^n $ in $ M_n(\C)$ is {\it good} if $a_{ij} = 0$ whenever $a_{ii} \neq a_{jj}$. 

For example, the matrix $\mathsmaller{\bigoplus}_{i=1}^k (\lambda _i I_{n_i} + N_i)$ mentioned above is a good upper-triangular matrix.
\end{definition}

In Lemma \ref{lem:JCD_read_av} below, we note that the Jordan-Chevalley decomposition of a good upper-triangular matrix can be easily read off. In the course of our discussion in this section, we see that every matrix in $M_n(\C)$ is similar to a good upper-triangular matrix, hence reducing the problem of ascertaining Jordan-Chevalley decomposition of a matrix to the much simpler setting of good upper-triangular matrices.

\begin{lem}
\label{lem:JCD_read_av}
\textsl{
Let $A = [a_{ij}]_{i,j =1}^n$ be a good upper-triangular matrix in $M_n(\C)$, that is, $a_{ij} = 0$ whenever $a_{ii} \neq a_{jj}$. Then the diagonalizable part of $A$ is $D := \mathrm{diag}(a_{11}, \ldots, a_{nn})$, which is obtained by replacing the off-diagonal entries of $A$ by $0$, and the nilpotent part of $A$ is $N := A-D$, which is obtained by replacing the diagonal entries of $A$ by $0$.
}
\end{lem}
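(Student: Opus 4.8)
The plan is to verify directly that the stated $D$ and $N$ satisfy the three defining properties of a Jordan-Chevalley decomposition: $D$ is diagonalizable, $N$ is nilpotent, $DN = ND$, and $A = D + N$. Since the decomposition in $M_n(\C)$ is unique (Definition \ref{def:JCD}), exhibiting any such pair identifies it as \emph{the} Jordan-Chevalley decomposition, so no uniqueness argument is needed beyond invoking that fact.

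First, $D = \mathrm{diag}(a_{11}, \ldots, a_{nn})$ is visibly diagonalizable, and $A = D + N$ holds by construction since $N = A - D$ is $A$ with its diagonal zeroed out. Next, $N$ is strictly upper-triangular — it is upper-triangular because $A$ is, and its diagonal is zero by construction — hence $N^n = {\bf 0}_n$, so $N$ is nilpotent. The only substantive point is the commutation $DN = ND$. The key step here is to use the "good" hypothesis: for entries of $DN$ and $ND$ one computes $(DN)_{ij} = a_{ii} n_{ij}$ and $(ND)_{ij} = n_{ij} a_{jj}$, where $n_{ij}$ denotes the $(i,j)$ entry of $N$. These agree exactly when $a_{ii} n_{ij} = a_{jj} n_{ij}$ for all $i,j$; if $a_{ii} \ne a_{jj}$ then goodness forces $a_{ij} = 0$, and since $i \ne j$ (as the diagonal of $N$ vanishes) we get $n_{ij} = a_{ij} = 0$, so both sides are zero; if $a_{ii} = a_{jj}$ the two sides are trivially equal. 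Hence $DN = ND$.

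I expect no real obstacle: the proof is a short entrywise check, and the only place the hypothesis is used is in the commutation step. Concretely, I would write: $N$ is strictly upper-triangular, hence nilpotent; $D$ is diagonal, hence diagonalizable; $A = D + N$ is immediate; and for the commutation, compare $(i,j)$ entries of $DN$ and $ND$ as above, splitting into the cases $a_{ii} = a_{jj}$ and $a_{ii} \ne a_{jj}$, using goodness (together with $i \ne j$ since off-diagonal) in the latter. Then appeal to the uniqueness of the Jordan-Chevalley decomposition over $\C$ to conclude that this $D$ and $N$ are precisely $D(A)$ and $N(A)$.
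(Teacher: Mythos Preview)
Your proposal is correct and follows essentially the same approach as the paper: both observe that $N$ is strictly upper-triangular (hence nilpotent), verify $DN = ND$ via an entrywise computation using the ``good'' hypothesis, and invoke uniqueness of the Jordan--Chevalley decomposition. The paper compresses the commutation check into the single formula $DN - ND = [a_{ij}(a_{ii} - a_{jj})]_{i,j=1}^n$, but your case split amounts to the same thing.
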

\begin{proof}
Observe that $N$ is a strictly upper-triangular matrix, hence nilpotent. Using the uniqueness of the Jordan-Chevalley decomposition, it is enough to show that $D$ and $N$ commute with each other. A straightforward calculation shows that, 
$$ DN-ND = [a_{ij}(a_{ii}-a_{jj})]_{i,j =1}^n.$$
Since $a_{ij}=0$ whenever $a_{ii}\neq a_{jj}$, it follows that $DN=ND$. 
\end{proof}

\begin{definition}
We define a total ordering called the {\it twisted lexicographical ordering} on $[n] \times [n]$ as follows: For two elements $(i_1,j_1), (i_2,j_2)$ in $[n]\times[n]$, we stipulate $$(i_1,j_1) <_{\mathfrak{tl}} (i_2,j_2),$$ if either $i_1 < i_2$, or $i_1 = i_2$ and $j_1 > j_2$.

Note that the total ordering, $<_{\mathfrak{tl}}$, on $[n] \times [n]$ has $(1,n)$ as its minimum and $(n,1)$ as its maximum element. This ordering can be visualized by traversing an $n \times n$ grid as follows:
\begin{enumerate}[label=\arabic*.]
    \item Begin at the top-right cell (corresponding to $(1,n)$); 
    \item Move leftward across the current row.
    \item Upon reaching the left edge, proceed to the rightmost cell of the row below and continue moving left.
    \item The traversal ends at the bottom-left cell (corresponding to $(n,1)$).
\end{enumerate} 
\end{definition}
 
Lemma \ref{lem:key_step_for_good_matrix} below is the main tool in Proposition \ref{prop:GM(P)-valued_matrix_in_similarity_orbit} which we use to identify a good upper-triangular matrix in the similarity orbit of a given upper-triangular matrix.

\begin{lem}
\label{lem:key_step_for_good_matrix}
\textsl{
Let $A$ be an upper-triangular matrix in $M_n(\C)$ and $i < j$ be fixed indices in $[n]$. For $\lambda \in \C$, let $A_{\lambda} := (I_n + \lambda E_{ij}) A(I_n - \lambda E_{ij})$. Then $A_{\lambda}$ is an upper-triangular matrix similar to $A$ such that the $(k,\ell)^\mathrm{th}$ entry of $A_\lambda$ coincides with the $(k,\ell)^\mathrm{th}$ entry of $A$ whenever $k = \ell$ or $(i,j) <_{\mathfrak{tl}} (k, \ell)$.
}
\end{lem}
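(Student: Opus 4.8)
The plan is to carry out the conjugation $A_\lambda = (I_n + \lambda E_{ij})A(I_n - \lambda E_{ij})$ explicitly and track which entries change. First I would record that $I_n + \lambda E_{ij}$ is invertible with inverse $I_n - \lambda E_{ij}$ (since $E_{ij}^2 = 0$ as $i \neq j$, using \eqref{eqn:mult_ele_mat}), so $A_\lambda$ is genuinely similar to $A$. Then I would expand the product: using $E_{ij}A$, $AE_{ij}$, and $E_{ij}AE_{ij}$, and \eqref{eqn:mult_ele_mat}, one gets
\[
A_\lambda = A + \lambda(E_{ij}A - AE_{ij}) - \lambda^2 E_{ij}AE_{ij}.
\]
Here $E_{ij}A$ is the matrix whose $i$-th row is the $j$-th row of $A$ and whose other rows vanish; $AE_{ij}$ is the matrix whose $j$-th column is the $i$-th column of $A$ and whose other columns vanish; and $E_{ij}AE_{ij} = a_{ji}E_{ij}$, which is zero because $A$ is upper-triangular and $j > i$. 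So in fact $A_\lambda = A + \lambda(E_{ij}A - AE_{ij})$.

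Next I would analyze the support of the correction term $\lambda(E_{ij}A - AE_{ij})$ entrywise. The $(k,\ell)$-entry of $E_{ij}A$ is $\delta_{ki}\,a_{j\ell}$, and the $(k,\ell)$-entry of $AE_{ij}$ is $\delta_{\ell j}\,a_{ki}$. So the correction is supported on row $i$ and column $j$. For the upper-triangularity claim: in row $i$, the added entries are $\lambda a_{j\ell}$ for $\ell$ in $[n]$, and since $A$ is upper-triangular $a_{j\ell} = 0$ for $\ell < j$, hence these entries vanish unless $\ell \geq j > i$, so they lie strictly above the diagonal; in column $j$, the subtracted entries are $-\lambda a_{ki}$, which vanish for $k > i$ since $A$ is upper-triangular, so they occur only for $k \leq i < j$, again on or above the diagonal (and the diagonal entry $(i,j)$ only if $i = j$, excluded). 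Thus $A_\lambda$ is upper-triangular.

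Finally I would verify the stability of entries. The diagonal entries: the $(k,k)$-entry of the correction is $\delta_{ki}a_{jk} - \delta_{kj}a_{ki}$; the first term needs $k = i$ and then is $a_{ji} = 0$ (upper-triangular, $j>i$), the second needs $k = j$ and then is $a_{ji} = 0$ again, so the diagonal is unchanged. For the off-diagonal entries with $(i,j) <_{\mathfrak{tl}} (k,\ell)$: a changed entry must have either $k = i$ (row $i$), or $\ell = j$ (column $j$). If $k = i$ and $(i,j) <_{\mathfrak{tl}} (i,\ell)$ then by definition of $<_{\mathfrak{tl}}$ we need $\ell < j$, but then $a_{j\ell} = 0$ so the entry is unchanged. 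If $\ell = j$ and $(i,j) <_{\mathfrak{tl}} (k,j)$ then we need $i < k$ (the case $i = k$, $j > j$ is impossible), but then $a_{ki} = 0$ since $A$ is upper-triangular and $k > i$, so again the entry is unchanged. This exhausts all cases with $(i,j) <_{\mathfrak{tl}} (k,\ell)$, completing the proof. I do not anticipate a serious obstacle here; the only thing requiring care is bookkeeping the twisted lexicographic condition against the two vanishing patterns (upper-triangularity of $A$ in row $j$ versus in column $i$), which is exactly the point of the twisted ordering and falls out cleanly.
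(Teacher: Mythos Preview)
Your proof is correct and follows essentially the same approach as the paper: expand $A_\lambda = A + \lambda[E_{ij},A]$ using $E_{ij}AE_{ij} = a_{ji}E_{ij} = 0$, compute the commutator entrywise as $a_{j\ell}\delta_{ki} - a_{ki}\delta_{j\ell}$, and verify it vanishes on the diagonal and for $(i,j) <_{\mathfrak{tl}} (k,\ell)$ by the same case split. The only minor difference is that the paper dispatches upper-triangularity in one line (conjugation by an invertible upper-triangular matrix preserves the form), whereas you verify it entrywise; both are fine.
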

\begin{proof}
From equation (\ref{eqn:mult_ele_mat}), note that $(I_n + \lambda E_{ij})(I_n-\lambda E_{ij}) = I_n-\lambda^2 E_{ij}^2 = I_n$, and hence $(I_n+\lambda E_{ij})^{-1} = I_n - \lambda E_{ij}$. Since $i < j$, $E_{ij}$ is a strictly upper-triangular matrix, so that $A_\lambda := (I_n + \lambda E_{ij}) A(I_n - \lambda E_{ij})$ is an upper-triangular matrix similar to $A$. Clearly, the diagonal of $A_\lambda$ coincides with the diagonal of $A$.

For $k, \ell \in [n]$, let $a_{k\ell}$ denote the $(k,\ell)^\mathrm{th}$ entry of $A$. Since $A$ is upper-triangular, and $i < j$, we have $a_{ji} = 0$ and,
\begin{align}\label{eqn:A_lambda}
A_\lambda = (I_n + \lambda E_{ij}) A(I_n - \lambda E_{ij}) &= A+\lambda E_{ij}A - \lambda AE_{ij} - \lambda^2E_{ij}AE_{ij} \nonumber \\
&= A+\lambda E_{ij}A - \lambda AE_{ij} - \lambda^2 a_{ji}E_{ij} \nonumber  \\
&= A + \lambda[E_{ij}, A]. 
\end{align}
The reader may verify that for any matrix $A \in M_n(\C)$, the $(k,\ell)^\mathrm{th}$ entry of the commutator $[E_{i j}, A]$, is given by
$$[E_{i j}, A]_{k \ell} = a_{j \ell} \delta_{k i} - a_{k i} \delta_{j \ell}.$$
Note that $\delta_{ki} = 0 = a_{k i}$ when $i < k$, and $\delta_{j\ell} = 0 = a_{j\ell}$ when $j > \ell$. We conclude that if $(i,j) <_{\mathfrak{tl}} (k,\ell) $, then $[E_{i j}, A]_{k \ell} = a_{j \ell} \delta_{k i} - a_{k i} \delta_{j \ell} = 0$.

\medskip

\noindent Thus, from equation (\ref{eqn:A_lambda}), it follows that the $(k,\ell)^\mathrm{th}$ entry of $A_\lambda$ is equal to $a_{k,\ell}$ whenever $(i, j) <_{\mathfrak{tl}} (k,\ell)$.
\end{proof}

\begin{definition}
Let $\kP_n$ denote the set of all partitions of $[n]$. For $\pi \in \kP_n$, let $\sim_\pi$ be a binary relation on $[n]$ defined by $i \sim_\pi j$ if and only if $i, j$ belong to the same subset of $[n]$ from the partition $\pi$.

For $\vec{v} \in \C^n$, recall from Definition \ref{def:partition_wrt_vector} that, $\sim_{\vec{v}}$ is an equivalence relation on $[n]$, which gives rise to the partition $\sP(\vec{v})$ on $[n]$. Clearly, $\sim_{\vec{v}}$ and $\sim_{\sP(\vec{v})}$ define the same equivalence relation on $[n]$. For a partition $\pi$ of $[n]$, we define the following sets
\begin{align*}
    UT(\pi) :&= \big\{ A \in UT_n(\C) : \sP(\mathrm{dvec}(A)) = \pi \big\}.\\
    GUT(\pi) :&= \big\{ A = [a_{ij}]_{i,j=1}^n \in UT(\pi) : a_{ij} = 0 \textnormal{ whenever } i \nsim_\pi j \big\}.
\end{align*}
Note that $UT(\pi)$ is the set of all upper triangular matrices whose principal diagonal, by grouping together the diagonal indices corresponding to the same eigenvalue, induces the partition $\pi$, and $GUT(\pi)$ is the collection of all good upper-triangular matrices in $UT(\pi)$.
\end{definition}

\begin{prop}
\label{prop:GM(P)-valued_matrix_in_similarity_orbit}
\textsl{
Let $\Omega$ be a topological space, and $\pi$ be a partition of $[n]$. Let $A : \Omega \to UT_n(\C)$ be a continuous function with range in $UT(\pi)$. Then there is a continuous function $S : \Omega \to UT_n(\C) \cap GL_n(\C)$ such that $SAS^{-1}$ has range in $GUT(\pi)$. In other words, every upper-triangular matrix over $C(\Omega)$ is similar to a good upper-triangular matrix over $C(\Omega)$, with the similarity implemented by an invertible upper-triangular matrix over $C(\Omega)$.
}
\end{prop}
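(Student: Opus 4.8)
The plan is to construct $S$ entry-by-entry, sweeping through the strictly-upper-triangular positions $(i,j)$ with $i<j$ in the order dictated by the twisted lexicographical ordering $<_{\mathfrak{tl}}$, and killing the $(i,j)$ entry whenever $i \nsim_\pi j$ by a conjugation of the type considered in Lemma~\ref{lem:key_step_for_good_matrix}. Concretely, given $A : \Omega \to UT(\pi)$ continuous with entries $a_{k\ell} \in C(\Omega)$, note that if $i \nsim_\pi j$ then $a_{ii}(x) \ne a_{jj}(x)$ for \emph{every} $x \in \Omega$ (since the diagonal of $A(x)$ induces the partition $\pi$ for all $x$, so the equivalence class structure of the diagonal is constant). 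Hence the function $x \mapsto (a_{ii}(x) - a_{jj}(x))^{-1}$ is continuous on $\Omega$, and one checks from the commutator computation in the proof of Lemma~\ref{lem:key_step_for_good_matrix} — namely $[E_{ij},A]_{k\ell} = a_{j\ell}\delta_{ki} - a_{ki}\delta_{j\ell}$, so $[E_{ij},A]_{ij} = a_{jj} - a_{ii}$ — that choosing $\lambda := a_{ij}/(a_{ii}-a_{jj}) \in C(\Omega)$ and conjugating by $I_n + \lambda E_{ij}$ replaces the $(i,j)$ entry by $a_{ij} + \lambda(a_{jj}-a_{ii}) = 0$. By Lemma~\ref{lem:key_step_for_good_matrix}, this conjugation leaves the diagonal unchanged (so the range stays in $UT(\pi)$) and leaves unchanged every entry $(k,\ell)$ with $(i,j) <_{\mathfrak{tl}} (k,\ell)$.

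The key bookkeeping step is therefore an induction along $<_{\mathfrak{tl}}$. Enumerate the strictly-upper-triangular positions as $(i_1,j_1) <_{\mathfrak{tl}} (i_2,j_2) <_{\mathfrak{tl}} \cdots <_{\mathfrak{tl}} (i_m,j_m)$ (these are the positions strictly above the diagonal, which form a $<_{\mathfrak{tl}}$-interval once we discard the diagonal and lower-triangular cells). Set $A^{(0)} := A$. Having produced $A^{(t-1)} : \Omega \to UT(\pi)$ continuous which agrees with a good matrix in all positions $(k,\ell)$ with $(k,\ell) \le_{\mathfrak{tl}} (i_{t-1},j_{t-1})$ that are above the diagonal — more precisely, whose $(k,\ell)$ entry vanishes whenever $k \nsim_\pi \ell$ and $(k,\ell) \le_{\mathfrak{tl}} (i_{t-1},j_{t-1})$ — we do the following at step $t$: if $i_t \sim_\pi j_t$, set $A^{(t)} := A^{(t-1)}$ and $S_t := I_n$; otherwise set $\lambda_t := a^{(t-1)}_{i_t j_t}/(a^{(t-1)}_{i_t i_t} - a^{(t-1)}_{j_t j_t}) \in C(\Omega)$, $S_t := I_n + \lambda_t E_{i_t j_t} \in UT_n(C(\Omega)) \cap GL_n(C(\Omega))$, and $A^{(t)} := S_t A^{(t-1)} S_t^{-1}$. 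By the previous paragraph, $A^{(t)}$ is again continuous with range in $UT(\pi)$, has $(i_t,j_t)$ entry zero when $i_t \nsim_\pi j_t$, and — crucially — by the "entries with $(i_t,j_t) <_{\mathfrak{tl}} (k,\ell)$ are unchanged" clause of Lemma~\ref{lem:key_step_for_good_matrix}, the previously-cleared positions $(k,\ell) \le_{\mathfrak{tl}} (i_{t-1},j_{t-1})$ are undisturbed, so the inductive hypothesis is maintained. After $m$ steps, $A^{(m)}$ has range in $GUT(\pi)$. Finally put $S := S_m S_{m-1} \cdots S_1$; this is a product of invertible upper-triangular matrices over $C(\Omega)$, hence lies in $UT_n(C(\Omega)) \cap GL_n(C(\Omega))$, and $SAS^{-1} = A^{(m)}$ has range in $GUT(\pi)$, as required.

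The main obstacle — and the reason the hypothesis "range in $UT(\pi)$" rather than merely "upper-triangular" is essential — is ensuring that the divisions by $a^{(t-1)}_{i_t i_t} - a^{(t-1)}_{j_t j_t}$ produce \emph{continuous} (indeed, everywhere-defined, hence bounded on $\Omega$ if $\Omega$ is compact) functions in $C(\Omega)$. Because every conjugation in the process fixes the diagonal, we have $a^{(t-1)}_{kk} = a_{kk}$ throughout, and $i_t \nsim_\pi j_t$ forces $a_{i_t i_t}(x) \ne a_{j_t j_t}(x)$ for all $x$; continuity of the reciprocal of a nowhere-vanishing continuous function then gives $\lambda_t \in C(\Omega)$. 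One should also record the routine verification that $(I_n + \lambda_t E_{i_t j_t})^{-1} = I_n - \lambda_t E_{i_t j_t}$ (immediate from $E_{i_t j_t}^2 = {\bf 0}_n$ since $i_t < j_t$, as in the proof of Lemma~\ref{lem:key_step_for_good_matrix}), so that $S^{-1}$ is likewise upper-triangular over $C(\Omega)$. No compactness or special topology of $\Omega$ is needed; the argument is purely algebraic once continuity of the reciprocals is in hand, which is why it will apply verbatim with $C(\Omega)$ replaced, in later sections, by the fragments of $\sN(X)$ living over the clopen pieces of $X$.
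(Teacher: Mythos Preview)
Your overall strategy is the constructive counterpart of the paper's min--max contradiction argument and is correct in spirit, but the induction runs in the wrong direction. You enumerate the strictly upper-triangular positions in \emph{increasing} $<_{\mathfrak{tl}}$-order and at step $t$ clear position $(i_t,j_t)$, then claim that the previously cleared positions $(k,\ell) \le_{\mathfrak{tl}} (i_{t-1},j_{t-1})$ are undisturbed by the preservation clause of Lemma~\ref{lem:key_step_for_good_matrix}. But that clause preserves entries $(k,\ell)$ with $(i_t,j_t) <_{\mathfrak{tl}} (k,\ell)$, i.e.\ entries \emph{larger} than the current position in $<_{\mathfrak{tl}}$, whereas your previously cleared positions are \emph{smaller}. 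Concretely, for $n=3$ the $<_{\mathfrak{tl}}$-increasing list of strictly upper-triangular positions is $(1,3),(1,2),(2,3)$; after clearing $(1,3)$ at step~1, step~2 conjugates by $I_3 + \lambda E_{12}$, and since $[E_{12},A]_{13} = a_{23}$ the $(1,3)$-entry becomes $a_{13} + \lambda a_{23}$, generally nonzero again.

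The fix is simply to sweep in \emph{decreasing} $<_{\mathfrak{tl}}$-order. Then the positions already cleared lie $<_{\mathfrak{tl}}$-above the current one, exactly matching the hypothesis of Lemma~\ref{lem:key_step_for_good_matrix}, and your inductive hypothesis is genuinely preserved. With this one correction your argument goes through verbatim and is essentially the constructive unwinding of the paper's proof, which phrases the same mechanism as a contradiction: assuming no matrix in the similarity orbit $\mathbb O(A)$ is good, it takes the $<_{\mathfrak{tl}}$-minimum over $\mathbb O(A)$ of the $<_{\mathfrak{tl}}$-maximal bad entry and shows that a single conjugation by $I_n + \lambda E_{k\ell}$ lowers it.
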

\begin{proof}
Note that the $*$-algebras $C\big(\Omega;M_n(\C)\big)$ and $M_n\big(C(\Omega)\big)$, are naturally $*$-isomorphic, allowing us to view them interchangeably. Let $A = [a_{ij}]_{i,j=1}^n$ with $a_{ij} \in C(\Omega)$. Consider the sets,
\begin{align*}
F_\pi &:= \big\{(i,j) \in [n]\times[n] : i < j \textnormal{ and } i \nsim_\pi j\big\} \subseteq [n] \times [n],\\
\mathbb O(A) &:= \big\{ SAS^{-1} : S \in UT_n\big(C(\Omega)\big) \cap GL_n\big(C(\Omega)\big) \textnormal{ with } \mathrm{dvec}(S) = ({\bf1}, \ldots, {\bf1}) \big\}.
\end{align*}
Observe that $\mathbb{O}(A)$ consists of upper-triangular matrices similar to $A$ in $M_n\big(C(\Omega)\big)$, and the diagonal of every matrix in $\mathbb O(A)$ coincides with the diagonal of $A$. Below we show that there is a matrix $B \in \mathbb O(A)$ all of whose $F_\pi$-entries are ${\bf 0}$. Clearly, for such a $B$, $\ran(B) \subseteq GUT(\pi)$, which proves the result.

Suppose, if possible, every matrix $B = [b_{ij}]_{i,j \in [n]}$ in $ \mathbb O(A)$ has a non-zero entry for some index in $F_\pi$. Let
\begin{equation}
\label{eqn:min_max}
(k, \ell) := \min_{B \in \mathbb O(A)} \big\{ \max \{ (i, j) \in F_{\pi} : b_{ij} \ne {\bf 0} \} \big\},
\end{equation} 
where the $\min$ and $\max$ are taken with respect to the twisted lexicographical ordering on $[n] \times [n]$. Since $F_\pi$ is a totally ordered finite set, there is a matrix $B' = [b'_{ij}]_{i,j \in [n]} $ in $ \mathbb O(A)$ such that 
\begin{equation*}
\label{eqn:min_max'}
 \max \{ (i, j) \in F_{\pi} : b'_{ij} \ne {\bf0} \} = (k,\ell).
\end{equation*} 
Note that $k < \ell$, $b'_{k\ell} \neq {\bf 0}$,and $b'_{ij} = {\bf 0}$ for all $(i,j) \in F_\pi$ such that $(k,\ell) <_{\mathfrak{tl}} (i,j)$.

\vskip0.05in

Since $\ran (A) \subseteq UT(\pi)$, it follows that $a_{ii}(x) \neq a_{jj}(x)$ for all $(i,j) \in F_\pi$, and $x \in \Omega$. In other words, $a_{kk}-a_{\ell\ell}$ is an invertible function in $C(\Omega)$. Define 
$$S := I_n + (a_{kk}-a_{\ell \ell})^{-1} b'_{k \ell} E_{k \ell}.$$
Clearly $S \in UT_n\big(C(\Omega)\big)$, and $\mathrm{dvec}(S) = ({\bf 1}, \ldots, {\bf1})$. 
It follows from the computation in Lemma \ref{lem:key_step_for_good_matrix} that $S$ is invertible in $UT_n\big(C(\Omega)\big)$ with $S^{-1} = I_n - (a_{kk}-a_{\ell \ell})^{-1} b'_{k \ell} E_{k \ell}$, so that $SB'S^{-1}$ lies in $\mathbb O(A)$, and the $(k,\ell)^\mathrm{th}$ entry of $SB'S^{-1}$ is given by
\begin{align*}
   \hspace{2.3cm} [SB'S^{-1}]_{k\ell} &= b'_{k\ell}+(a_{kk}-a_{\ell\ell})^{-1}b'_{k\ell}(b'_{\ell\ell}-b'_{kk})\\ 
    &= b'_{k\ell}+b'_{k\ell}(a_{kk}-a_{\ell\ell})^{-1}(a_{\ell\ell}-a_{kk}) \hspace{1cm} ( \textnormal{ as } b'_{ii} = a_{ii} ~ \forall i \in [n])\\
    &= {\bf 0}.
\end{align*}
Again, from Lemma \ref{lem:key_step_for_good_matrix}, if $(k,\ell) <_{\mathfrak{tl}} (i,j) $, then the $(i,j)^\mathrm{th}$ entry of $SB'S^{-1}$ is equal to $b'_{ij}$. In particular, if the index $(i,j) \in F_\pi$ satisfies $(k,\ell) <_{\mathfrak{tl}} (i,j)$, then $[SB'S^{-1}]_{ij} = b'_{ij} = {\bf0}$. Thus, 
$$\max \{ (i, j) \in F_{\pi} : [SB'S^{-1}]_{ij} \ne {\bf0} \} <_{\mathfrak{tl}} (k, \ell) .$$
This contradicts the minimality of $(k,\ell)$ in (\ref{eqn:min_max}), and the assertion follows.
\end{proof}

\begin{prop}\label{prop:JCDgood}
\textsl{
Let $X$ be a Stonean space, and $A \in \sN\big(X; M_n(\C)\big)$. Then there is an open dense subset $\sO$ of $X$, which is contained in $\dom(A)$, and $S \in C\big(\sO; GL_n(\C)\big)$ such that $S(x) A(x) S(x)^{-1}$ is a good upper-triangular matrix (see Definition \ref{def:good_ut_mat}) for every $x \in \sO$. 
}
\end{prop}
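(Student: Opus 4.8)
The plan is to combine the two structural results already established: Theorem \ref{thm:cts_upper-tri}, which gives a unitary $V \in M_n\big(C(X)\big)$ putting $A$ in upper-triangular form $B = V^*AV \in UT_n\big(\sN(X)\big)$, and Proposition \ref{prop:GM(P)-valued_matrix_in_similarity_orbit}, which handles the passage from upper-triangular to \emph{good} upper-triangular over $C(\Omega)$ provided the diagonal eigenvalue-coincidence pattern is \emph{constant}. The obstruction is precisely that over $\sN(X)$ the partition $\sP(\mathrm{dvec}(B(x)))$ need not be constant in $x$; different points of $X$ may exhibit different coincidence patterns among the diagonal entries $b_{11}(x), \ldots, b_{nn}(x)$. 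This is exactly the kind of ``combinatorial matrix data varying over a Stonean space'' that Lemma \ref{lem:collection_of_disj_open_sets} is designed to glue, and that lemma will be the workhorse.

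Concretely, first I would apply Theorem \ref{thm:cts_upper-tri} to get $V \in M_n\big(C(X)\big)$ unitary with $B := V^* A V \in UT_n\big(\sN(X)\big)$, and set $\sO_0 := \dom(B) \subseteq \dom(A)$, an open dense subset of $X$. The diagonal entries $b_{11}, \ldots, b_{nn}$ of $B$ are in $\sN(X)$, so $\mathrm{dvec}(B) : \sO_0 \to \C^n$ is continuous. Composing with the continuous map $\C^n \to (\kP_n, \le)$, $\vec{v} \mapsto \sP(\vec{v})$ of Lemma \ref{lem:partition_wrt_diag}, I obtain a continuous map $\varphi : \sO_0 \to (\kP_n, \le)$ (with the Scott topology), where $\varphi(x) = \sP(\mathrm{dvec}(B(x)))$. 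Now invoke Lemma \ref{lem:collection_of_disj_open_sets} with $\fP = \kP_n$: there is a family $\{O_\pi\}_{\pi \in \kP_n}$ of mutually disjoint open subsets of $X$ with $O_\pi \subseteq \varphi^{-1}\{\pi\}$ and $\sO := \bigcup_{\pi \in \kP_n} O_\pi$ dense in $X$; note $\sO \subseteq \sO_0 \subseteq \dom(A)$ is open dense. On each $O_\pi$ the map $x \mapsto B(x)$ is a continuous function into $UT_n(\C)$ with range in $UT(\pi)$, so Proposition \ref{prop:GM(P)-valued_matrix_in_similarity_orbit} (applied with $\Omega = O_\pi$) yields a continuous $S_\pi : O_\pi \to UT_n(\C) \cap GL_n(\C)$ with $S_\pi(x) B(x) S_\pi(x)^{-1} \in GUT(\pi)$, in particular a good upper-triangular matrix, for every $x \in O_\pi$.

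Finally, since the $O_\pi$ are pairwise disjoint open sets whose union is $\sO$, the gluing lemma (\cite[Theorem 18.2(f)]{munkres}) gives a well-defined continuous map $S_0 : \sO \to UT_n(\C) \cap GL_n(\C)$ with $S_0|_{O_\pi} = S_\pi$; then $S_0(x) B(x) S_0(x)^{-1}$ is good upper-triangular for every $x \in \sO$. Pulling back through $V$, set $S := S_0 \cdot V^*|_{\sO} \in C\big(\sO; GL_n(\C)\big)$; since $V(x)$ is unitary and $S_0(x)$ invertible, $S(x) \in GL_n(\C)$, and $S(x) A(x) S(x)^{-1} = S_0(x) \big(V(x)^* A(x) V(x)\big) S_0(x)^{-1} = S_0(x) B(x) S_0(x)^{-1}$ is good upper-triangular for all $x \in \sO$, as required. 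Only a minor check of the (finitely many, since $\kP_n$ is finite) gluings across the clopen-ish pieces is needed, so I expect no serious obstacle beyond correctly feeding the varying-partition data into Lemma \ref{lem:collection_of_disj_open_sets}; indeed that translation — recognizing $\sP \circ \mathrm{dvec} \circ B$ as a continuous $\kP_n$-valued function and that $GUT(\pi)$ consists of good matrices — is the conceptual heart of the argument.
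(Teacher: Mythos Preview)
Your proposal is correct and follows essentially the same approach as the paper: upper-triangularize via Theorem \ref{thm:cts_upper-tri}, use Lemma \ref{lem:partition_wrt_diag} to make $x \mapsto \sP(\mathrm{dvec}(B(x)))$ a continuous $\kP_n$-valued map, apply Lemma \ref{lem:collection_of_disj_open_sets} to obtain disjoint open pieces $O_\pi$ on which the partition is constant, invoke Proposition \ref{prop:GM(P)-valued_matrix_in_similarity_orbit} on each piece, glue, and pull back through $V$. The only cosmetic difference is notation ($\sO_0, S_0, \varphi$ versus the paper's $\sO', S', \phi$); in fact $\dom(B) = \dom(A)$ since conjugation by the everywhere-defined unitary $V$ preserves norms, so your containment $\sO_0 \subseteq \dom(A)$ is an equality.
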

\begin{proof}
From the proof of Theorem \ref{thm:cts_upper-tri}, there is a unitary element $~V  \in C\big(X ; M_n(\C)\big)$ such that $B = V^*AV$ lies in $\sN\big(X;UT_n(\C)\big)$.
Let $\sO' := \dom(B)$, and $\phi : \sO' \to \kP_n$ be the map defined by the following commutative diagram,
$$
\begin{tikzcd}
\sO' \arrow{r}{\phi} \arrow[swap]{d}{B} & \kP_n  \\
UT_n(\C) \arrow[swap]{r}{\mathrm{dvec}} & \C^n \arrow[swap]{u}{\sP}
\end{tikzcd}
$$
that is, $\phi(x) = \sP\big(\mathrm{dvec}(B(x))\big)$ for $x \in \sO'$. 
Clearly, $B$ and $T \mapsto \mathrm{dvec}(T) $ are continuous maps, and the continuity of the map $\vec{v} \mapsto \sP(\vec{v})$ follows from Lemma \ref{lem:partition_wrt_diag}. Thus, $\phi$ is continuous.

From Lemma \ref{lem:collection_of_disj_open_sets}, there is a collection $\{O_\pi\}_{\pi \in \kP_n}$ of disjoint open subsets of $\sO '$ such that $O_\pi \subseteq \phi^{-1}(\pi) = \big\{x \in X : \sP(\mathrm{dvec}(B(x))) = \pi \big\}$ and $\bigcup_{\pi \in \kP_n} O_\pi \subseteq \sO'$ is dense in $X$. It is immediate from their very definitions that for each partition $\pi$ of $[n]$, the continuous function $B|_{O_\pi} : O_\pi \to UT_n(\C)$ has range in $UT(\pi)$. From Proposition \ref{prop:GM(P)-valued_matrix_in_similarity_orbit}, there is a continuous function $S'_\pi : O_\pi \to UT_n(\C) \cap GL_n(\C)$ such that $S'_\pi B|_{O_\pi} {S'_{\pi}}^{-1}$ has range in $GUT(\pi)$. 

Define $\sO := \bigcup_{\pi \in \kP_n} O_\pi$.
Since $O_\pi$'s are mutually disjoint open sets, the gluing lemma (see \cite[Theorem 18.2(f)]{munkres}) tells us that the map 
$$S' : \sO \to UT_n(\C) \cap GL_n(\C) ~\textnormal{ given by } ~ S'(x) = S'_\pi(x) \textnormal{ if } x \in O_\pi,$$
is a well-defined continuous map. 
Define the map $S :  \sO \to M_n(\C)$ by $S(x)=S'(x)V(x)^*$ for $x \in \sO.$
Clearly, $S = S'\cdot(V^*|_{\sO})$ is a continuous map. Moreover, for every $x \in \sO$, $S(x)$ is invertible with $S(x)^{-1} = V(x)S'(x)^{-1}$, so that $S \in C\big(\sO;GL_n(\C)\big)$.

Since for each partition $\pi$, the range of $S'_\pi B|_{O_\pi} {S'_\pi}^{-1}$ lies in $GUT(\pi)$, it is clear that $S(x)A(x)S(x)^{-1} = S'(x)V^*(x)A(x)V(x)S'(x)^{-1} = S'(x)B(x)S'(x)^{-1},$ is a good upper-triangular matrix for every point $x$ in $\sO$.
\end{proof}

\begin{thm}
\label{thm:main1}
\textsl{
Let $X$ be a Stonean space, and $A$ be a matrix in $M_n\big(\sN(X)\big)$. Then there is a unique commuting pair of matrices $D,N \in M_n\big(\sN(X)\big)$ such that $D$ is diagonalizable in $ M_n\big(\sN(X)\big)$, $N$ is nilpotent, and $A = D + N$.  In other words, every matrix in $M_n\big(\sN(X)\big)$ admits a unique Jordan-Chevalley decomposition in $M_n\big(\sN(X)\big)$.
(We refer to $D$, $N$, respectively, as the diagonalizable part, nilpotent part, respectively, of $A$.)
}
\end{thm}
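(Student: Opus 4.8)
The plan is to push the pointwise Jordan--Chevalley decomposition over $\C$ through the normal-function machinery developed above. I would identify $M_n\big(\sN(X)\big)$ with $\sN\big(X; M_n(\C)\big)$ via Remark \ref{rmrk:norm_iso}, work throughout with continuous representatives defined on a single open dense subset of $X$, and pass to normal extensions (Theorem \ref{thm:extension_theorem2}) only at the end; the compatibility of the $*$-algebra operations with this passage (Corollary \ref{cor:*-alg_eq_cls}, Lemma \ref{lem:sim_O}) is exactly what lets pointwise identities descend to $M_n\big(\sN(X)\big)$.

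For existence, I would invoke Proposition \ref{prop:JCDgood} to obtain an open dense $\sO \subseteq \dom(A)$ and $S \in C\big(\sO; GL_n(\C)\big)$ such that $B(x) := S(x)A(x)S(x)^{-1}$ is a good upper-triangular matrix for every $x \in \sO$. By Lemma \ref{lem:JCD_read_av} its pointwise decomposition is $B(x) = \Delta(x) + \big(B(x) - \Delta(x)\big)$, with $\Delta(x) := \mathrm{diag}\big(\mathrm{dvec}(B(x))\big)$ diagonal and $B(x) - \Delta(x)$ strictly upper triangular. I would set, on $\sO$,
\[
D_0 := S^{-1}\Delta S, \qquad N_0 := A - D_0 = S^{-1}\big(B - \Delta\big)S,
\]
which are continuous, hence lie in $\sO\big(X; M_n(\C)\big)$, and let $D, N \in M_n\big(\sN(X)\big)$ be their normal extensions; since $A|_{\sO} = D_0 + N_0$ and $A$ is its own normal extension, $A = D + N$. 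The class of $S$ is invertible in $M_n\big(\sN(X)\big)$ (the products $S S^{-1}$ and $S^{-1} S$ both equal $I_n$ on $\sO$), so $D$ is similar there to the class of $\Delta$, which is a genuine diagonal matrix of $\sN(X)$ because its off-diagonal entries vanish on $\sO$; hence $D$ is diagonalizable. Strict upper-triangularity gives $N_0(x)^n = {\bf 0}_n$ on $\sO$, so $N^n = {\bf 0}_n$ and $N$ is nilpotent. Finally $\Delta(x)$ commutes with $B(x) - \Delta(x)$ by Lemma \ref{lem:JCD_read_av}, so $D_0 N_0 = N_0 D_0$ on $\sO$ and therefore $DN = ND$.

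For uniqueness, suppose $A = D_1 + N_1 = D_2 + N_2$ are two Jordan--Chevalley decompositions in $M_n\big(\sN(X)\big)$. Unwinding the definitions, $D_i = T_i \Lambda_i T_i^{-1}$ with $T_i$ invertible and $\Lambda_i$ diagonal over $\sN(X)$, $N_i^{k_i} = {\bf 0}_n$ for some $k_i \in \N$, and $D_i N_i = N_i D_i$. Each of these equalities, together with $A = D_i + N_i$, holds on a common open dense subset $\sO^{\ast}$ of $X$ obtained by intersecting the finitely many domains involved. Then for every $x \in \sO^{\ast}$ the equation $A(x) = D_i(x) + N_i(x)$ is a Jordan--Chevalley decomposition of $A(x)$ in $M_n(\C)$ --- $D_i(x) = T_i(x)\Lambda_i(x)T_i(x)^{-1}$ is diagonalizable, $N_i(x)^{k_i} = {\bf 0}_n$ is nilpotent, and the parts commute --- so by uniqueness over $\C$ (Definition \ref{def:JCD}), $D_1(x) = D_2(x)$ and $N_1(x) = N_2(x)$ on $\sO^{\ast}$. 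Lemma \ref{lem:sim_O} then yields $D_1 = D_2$ and $N_1 = N_2$ in $M_n\big(\sN(X)\big)$.

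I expect the main obstacle to be bookkeeping rather than ideas: the similarity $S$ supplied by Proposition \ref{prop:JCDgood} lives only on a dense open set and is in general norm-unbounded --- this unboundedness being precisely the phenomenon exhibited in Section 3, and the reason the diagonalizable part need not remain in $M_n\big(C(X)\big)$ --- so one cannot extend $S$ to a continuous invertible matrix over $C(X)$. The crux is that $S$ is nonetheless invertible in $M_n\big(\sN(X)\big)$, and that every algebraic manipulation above is insensitive to shrinking domains to dense open subsets, so the construction and the pointwise uniqueness argument both go through at the level of $\sN(X)$.
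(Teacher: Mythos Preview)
Your proposal is correct and follows essentially the same approach as the paper's proof: both invoke Proposition \ref{prop:JCDgood} to pass to a good upper-triangular form on an open dense set, read off the pointwise decomposition via Lemma \ref{lem:JCD_read_av}, conjugate back and take normal extensions, and deduce uniqueness from the pointwise uniqueness over $\C$ together with the uniqueness of normal extensions. Your write-up is somewhat more explicit than the paper's (spelling out the invertibility of $S$ in $M_n\big(\sN(X)\big)$ and unwinding the uniqueness argument in more detail), but the underlying strategy is identical.
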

\begin{proof}
In view of Remark \ref{rmrk:norm_iso}, from Proposition \ref{prop:JCDgood}, there is an open dense subset $\sO$ of $X$ which is contained in $\dom(A)$, and an $S \in C\big(\sO; GL_n(\C)\big)$ such that $S(x)A(x)S(x)^{-1}$ is a good upper-triangular matrix for every $x \in \sO$. 

Viewing $S$ as a matrix in $GL_n\big(C(\sO)\big)$, and $A|_\sO$ as a matrix in $M_n\big(C(\sO)\big)$, let $T := S(A|_\sO)S^{-1} \in UT_n\big(C(\sO)\big)$. Let $D_{T}$ be the diagonal matrix in $M_n\big(C(\sO)\big)$, whose diagonal coincides with the diagonal of $T$. Since $T(x)$ is a good upper-triangular matrix for every $x \in \sO$, it follows from Lemma \ref{lem:JCD_read_av} that $D_{T}(x)$ is the diagonalizable part and $T(x)-D_T(x)$ is the nilpotent part in the Jordan-Chevalley decomposition of $T(x)$. 

For $S^{-1} D_{T} S,\, S^{-1}(T - D_{T}) S$, viewed as elements in $C\big(\sO; M_n(\C)\big)$, we denote their normal extensions by $D, N$, respectively, and view them as elements of $M_n\big(\sN(X)\big)$ (via Remark \ref{rmrk:norm_iso}). Note that $D$ is diagonalizable in $M_n\big(\sN(X)\big)$, $N$ is a nilpotent, and $\dom(D) = \dom(N)$. Clearly, $A = D + N$ and $DN = ND$ follow from Remark \ref{rem:normal_functions}; and $A(x) = D(x) + N(x)$ for every $x \in \dom(D)$.

The uniqueness of the pair $D, N$, follows from the uniqueness of the Jordan-Chevalley decomposition of complex matrices, $A(x)$ for $x \in \dom(D)$, and the uniqueness of normal extensions.
\end{proof}


\begin{prop}
\label{prop:counter}
\textsl{
Let $\beta \N$ denote the Stone-Cech compactification of the set of natural numbers endowed with the discrete topology. Then there is a matrix $A \in M_3\big(C(\beta \N)\big) \subseteq M_3\big(\sN(\beta \N)\big)$ such that the diagonalizable and nilpotent parts of $A$ do not lie in $M_3\big(C(\beta \N)\big)$.
}
\end{prop}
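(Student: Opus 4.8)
The plan is to recycle the explicit family $\{T_{\lambda_k}\}_{k\in\N}$ from the proof of Theorem \ref{thm:main_S3}, assemble it into a single bounded matrix-valued function on $\N$, and then play off the uniqueness of Jordan-Chevalley decompositions (Theorem \ref{thm:main1}) against the unboundedness recorded in $(\ref{eqn:JC_unbounded})$.

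\emph{Step 1: building $A$.} I would use that $\N$ embeds in $\beta\N$ as a dense set of isolated points, and set $\lambda_k := \tfrac1k$, $A(k) := T_{\lambda_k}$ for $k \in \N$. Since $\{T_{\lambda_k}\}$ converges in norm (as observed in the proof of Theorem \ref{thm:main_S3}), each of the nine entry-sequences $k \mapsto (T_{\lambda_k})_{ij}$ is bounded on $\N$, hence extends uniquely to an element of $C(\beta\N)$; so $A$ defines an element of $M_3\big(C(\beta\N)\big) \subseteq M_3\big(\sN(\beta\N)\big)$.

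\emph{Step 2: identifying the decomposition pointwise.} By Theorem \ref{thm:main1}, $A = D + N$ is the unique Jordan-Chevalley decomposition of $A$ in $M_3\big(\sN(\beta\N)\big)$. Since $\dom(D) = \dom(N)$ is open dense in $\beta\N$, it must contain every isolated singleton $\{k\}$, so $\N \subseteq \dom(D)$; the same applies to the domains of a matrix $S \in GL_3\big(\sN(\beta\N)\big)$ and a diagonal matrix $\Delta$ with $D = S\Delta S^{-1}$. Evaluating at a point $k \in \N$ then shows (exactly as in the proof of Theorem \ref{thm:main1}) that $D(k)$ is diagonalizable in $M_3(\C)$, $N(k)$ is nilpotent, $D(k)N(k) = N(k)D(k)$, and $A(k) = D(k) + N(k)$; by uniqueness of the Jordan-Chevalley decomposition over $\C$, the pair $\big(D(k), N(k)\big)$ is precisely the diagonalizable/nilpotent part of $A(k) = T_{\lambda_k}$ displayed in $(\ref{eqn:JC_unbounded})$.

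\emph{Step 3: concluding.} From $(\ref{eqn:JC_unbounded})$, the $(1,3)$-entry of $D(k)$ is $-k$ and that of $N(k)$ is $k$. Hence the $(1,3)$-entries of $D$ and of $N$, viewed as normal functions on $\beta\N$, restrict to unbounded functions on the dense set $\N$. Since every element of $C(\beta\N)$ is bounded (continuous on the compact space $\beta\N$), and an element of $M_3\big(C(\beta\N)\big)$ is exactly a matrix all of whose entries have domain $\beta\N$ (Remark \ref{rem:normal_functions}, Remark \ref{rmrk:norm_iso}), neither $D$ nor $N$ can lie in $M_3\big(C(\beta\N)\big)$. The only place demanding care is Step 2 — matching the pointwise evaluations of the $\sN(\beta\N)$-level decomposition with the genuine $M_3(\C)$-level one — but this is essentially already done inside the proof of Theorem \ref{thm:main1}, so the argument is short; everything else is bookkeeping.
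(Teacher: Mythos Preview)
Your proposal is correct and follows essentially the same approach as the paper: the paper uses the very same matrix $A(k)=T_{1/k}$, extends it to $\beta\N$ by boundedness, invokes Theorem \ref{thm:main1}, matches the pointwise decomposition with the classical one via uniqueness, and then contradicts boundedness using $(\ref{eqn:JC_unbounded})$. Your Step~2 justification that every isolated point of $\beta\N$ lies in $\dom(D)$ is a nice explicit touch that the paper leaves implicit.
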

\begin{proof}
Let $A : \N \to M_3(\C)$ be the bounded mapping given by,
$$ A(n) = 
\begin{bmatrix}
\frac{1}{n} & 1 & 0\\
0 & \frac{1}{n} & 1\\
0 & 0 & 0
\end{bmatrix}.$$
By Theorem \ref{thm:extension_theorem1}, there is a unique continuous extension of $A$ to $\beta \N$ which we again denote by $A$ by abuse of notation. As discussed in Remark \ref{rmrk:norm_iso}, below we view the $*$-algebras $\sN\big(\beta \N; M_3(\C)\big)$ and $M_3 \big(\sN(\beta \N) \big)$ interchangeably. Let $D(A) \in M_3(\sN(\beta \N)\big)$ denote the diagonalizable part in the Jordan-Chevalley decomposition of $A$, as obtained in Theorem \ref{thm:main1}. From the uniqueness of the Jordan-Chevalley decomposition for matrices in $M_3(\C)$ and Theorem \ref{thm:main_S3}, it is clear that $D(A)(x)$ is the diagonalizable part of the complex $3 \times 3$ matrix $A(x)$, for every $x \in \dom(D(A))$. It follows from equation (\ref{eqn:JC_unbounded}) that $\{ D(A(n)) \}_{n \in \N}$ is an unbounded sequence, which implies that $\dom(D(A))$ is a proper subset of $\beta \N$. Thus, neither $D(A)$ nor $N(A)$ belongs to $M_3\big(C(\beta \N)\big)$. 
\end{proof}

\begin{cor}
\textsl{
Let $X$ be an infinite Stonean space. Then there is a matrix $A $ in $ M_3\big(C(X)\big) \subseteq M_3\big(\sN(X)\big)$ such that the diagonalizable and nilpotent parts of $A$ do not lie in $M_3\big(C(X)\big)$.
}
\end{cor}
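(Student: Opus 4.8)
The plan is to transplant the $3\times 3$ example of Proposition \ref{prop:counter} onto $X$, exploiting the presence of infinitely many pairwise disjoint clopen subsets of $X$ together with the density of the domain of any normal function.

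First, I would extract from the proof of Proposition \ref{prop:inf_st_sp} a sequence $\{O_n\}_{n \in \N}$ of pairwise disjoint, non-empty, clopen subsets of $X$. For each $n$, let $B_n \in M_3(\C)$ be the matrix $A(n)$ from the proof of Proposition \ref{prop:counter} --- equivalently, the matrix $T_{1/n}$ appearing on the left-hand side of (\ref{eqn:JC_unbounded}); by (\ref{eqn:JC_unbounded}) the $(1,3)$-entry of $D(B_n)$ equals $-n$, so $\|D(B_n)\| \ge n$, whereas $\sup_n \|B_n\| < \infty$ since the sequence $\{B_n\}$ converges in $M_3(\C)$. Setting $A_0 \equiv B_n$ on each $O_n$ defines, via the gluing lemma, a bounded continuous function on the open set $\sO_0 := \bigcup_{n} O_n$. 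Since $\overline{\sO_0}$ is clopen ($X$ being Stonean) and the range of $A_0$ is contained in a (compact) closed ball of $M_3(\C)$, Theorem \ref{thm:extension_theorem1} extends $A_0$ continuously to $\overline{\sO_0}$; extending by the zero matrix on the clopen set $X \setminus \overline{\sO_0}$ produces $A \in C\big(X; M_3(\C)\big) = M_3\big(C(X)\big)$ (Remark \ref{rmrk:norm_iso}) with $A|_{O_n} \equiv B_n$ for every $n$.

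Next, let $A = D + N$ be the Jordan--Chevalley decomposition of $A$ in $M_3\big(\sN(X)\big)$ guaranteed by Theorem \ref{thm:main1}, and write $D = S \Delta S^{-1}$ with $S \in GL_3\big(\sN(X)\big)$ and $\Delta$ diagonal in $M_3\big(\sN(X)\big)$; the intersection $\sO''$ of the (open dense) domains of $D$, $N$, $S$, $S^{-1}$, $\Delta$ is again open and dense in $X$. For $x \in \sO''$ one checks that $D(x) = S(x)\Delta(x)S(x)^{-1}$ is a diagonalizable complex matrix, $N(x)$ is nilpotent (as $N^k$ is the zero matrix in $M_3(\sN(X))$ for some $k$), $D(x)$ commutes with $N(x)$, and $A(x) = D(x) + N(x)$; the uniqueness of the Jordan--Chevalley decomposition in $M_3(\C)$ then forces $D(x) = D\big(A(x)\big)$. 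As $\sO''$ is dense and each $O_n$ is a non-empty open set, there is a point $z_n \in \sO'' \cap O_n$, and then $A(z_n) = B_n$ gives $D(z_n) = D(B_n)$, so $\|D(z_n)\| \ge n$.

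Hence $D$ is unbounded on its domain. If $\dom(D)$ were all of $X$, then $D$ would be a continuous $M_3(\C)$-valued function on the compact space $X$, hence bounded --- a contradiction; so $\dom(D) \subsetneq X$, i.e.\ $D \notin M_3\big(C(X)\big)$ (Remark \ref{rem:normal_functions}). Finally, were $N$ in $M_3\big(C(X)\big)$, then $D = A - N$ would be too; so $N \notin M_3\big(C(X)\big)$ as well. The one delicate point --- and the reason for working with disjoint clopen sets rather than pulling back the $\beta\N$-example of Proposition \ref{prop:counter} through an embedding $\beta\N \hookrightarrow X$ --- is that the identity $D(x) = D(A(x))$ is available only on the dense set $\sO''$; the argument succeeds because each $O_n$ is \emph{open}, forcing $\sO''$ to meet it, whereas a copy of $\beta\N$ inside $X$ could be nowhere dense and thus invisible to $\dom(D)$.
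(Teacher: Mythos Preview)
Your proof is correct. The paper takes a slightly different route: it invokes Proposition~\ref{prop:inf_st_sp} to embed $\beta\N$ as a closed subset $\mathbb S\subseteq X$, pulls the $A$ of Proposition~\ref{prop:counter} back to $\mathbb S$, extends it to $X$ by the Tietze extension theorem, and then asserts that ``the argument by contradiction, as in the proof of Proposition~\ref{prop:counter}'' finishes the job. Your approach instead builds $A$ to be constant on each of the disjoint clopen sets $O_n$ and works directly with those, never passing through an embedded $\beta\N$. The concern you raise in your final paragraph is exactly the point on which the two arguments diverge: the pointwise identity $D(x)=D(A(x))$ is only guaranteed on an open dense set $\sO''$, and in the paper's approach the points $x_n$ representing $\N$ inside $\mathbb S$ need not lie in $\sO''$ (they are not isolated in $X$), so the contradiction-by-unboundedness step is not literally the same as in Proposition~\ref{prop:counter}. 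Your construction sidesteps this by ensuring that the witnesses to unboundedness live in genuinely \emph{open} sets $O_n$, which $\sO''$ must meet. What the paper's approach buys is brevity; what yours buys is that the delicate step is made fully explicit.
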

\begin{proof}
By Proposition \ref{prop:inf_st_sp}, there is a closed subset $\mathbb S$ of $X$ which is homeomorphic to $\beta \N$. By Proposition \ref{prop:counter}, there is a continuous function $A : \mathbb S \to M_3(\C)$. By the Tietze extension theorem (cf. \cite[Theorem 35.1]{munkres}), there is a continuous extension of $A$ to the whole of $X$. Using the argument by contradiction, as in the proof of Proposition \ref{prop:counter}, we arrive at the desired conclusion.
\end{proof}


\section{Affiliated operators and Murray-von Neumann algebras}
\label{sec:aff_op_MvN_algebras}
In this section, we review the basic concepts of affiliated operators and Murray-von Neumann algebras, with particular attention given to their direct sums, in preparation for the results in \S \ref{sec:JCD_decomposition}.

\subsection{Murray-von Neumann algebras}
\label{subsec:aff_op_MvN}
For the basic concepts from the theory of unbounded operators, we refer the reader to \cite[\S 2.7]{KR-I}.

\begin{definition}[{\cite[Definition 5.6.2]{KR-I}}]
\label{def:aff_op}
Let $\kR$ be a von Neumann algebra acting on the Hilbert space $\kH$.
A closed-densely defined linear operator $A$ with $\dom(A)$, $ \ran(A) \subseteq \kH$, is said to be {\it affiliated} with $\kR$, if $V^*AV = A$ for each unitary operator $V$ in the commutant of $\kR$. We denote the set of all such affiliated operators by $\textrm{Aff}(\kR)$.
\end{definition}

\begin{definition}[{see \cite[Definition 6.14]{kadison_liu}}]
\label{def:MvN}
Let $\kN$ be a finite von Neumann algebra acting on the Hilbert space $\kH$. For $A, B \in \textrm{Aff}(\kN)$, the operators $A+B, AB$, are pre-closed and densely-defined (see \cite[Proposition 6.8]{kadison_liu}). The two binary operations $\hat{+}, \hat{\cdot}$, defined by $A ~\hat{+}~ B = \overline{A+B}, A ~\hat{\cdot}~ B = \overline{AB}$, along with the operator adjoint, $(\cdot )^*$, as involution, endow $\textrm{Aff}(\kN)$ with the structure of a $*$-algebra (see \cite[Theorem 6.13]{kadison_liu}). With this $*$-algebraic structure, $\textrm{Aff}(\kN)$ is called the {\it Murray-von Neumann algebra} associated with $\kN$.

If $\kN$ is of type $I_n$ ($n \in \N$), then we say that $\textrm{Aff}(\kN)$ is a type $I_n$ Murray-von Neumann algebra. Similarly, if $\kN$ is of type $II_1$, then we say that $\textrm{Aff}(\kN)$ is a type $II_1$ Murray-von Neumann algebra. 
\end{definition}

Let $Y$ be a locally compact Hausdorff space and $\mu$ be a Radon measure on $Y$. For the abelian von Neumann algebra $L^{\infty}(Y ; \mu)$, $\textrm{Aff}\big(L^{\infty}(Y ; \mu)\big)$ is $*$-isomorphic to $L^0(Y ; \mu)$, the $*$-algebra of $\mu$-measurable functions on $Y$ (see \cite[Remark 5.2]{nayak_MvN_alg}). In particular, equipping $\N$ with the counting measure, we have $\textrm{Aff}\big(\ell^{\infty}(\N)\big) = \C^{\N}$, the $*$-algebra of complex-valued functions on $\N$.

With \cite[Theorem 4]{nelson} and \cite[Theorem 4.3]{nayak_MvN_alg} in mind, for a finite von Neumann algebra $\kN$, the $*$-algebra $\textrm{Aff}(\kN)$ may be defined intrinsically, independent of the representation of $\kN$ on a Hilbert space. This is achieved by realizing $\textrm{Aff}(\kN)$ as the completion of $\kN$ in the $\mathfrak{m}$-topology (see Definition \ref{def:m-top}). For topological and order-theoretic aspects of Murray-von Neumann algebras, we refer the reader to \cite{nayak_MvN_alg}.

\begin{definition}[see {\cite[Definition 3.3]{nayak_MvN_alg}}]
\label{def:m-top}
Let $\kN$ be a finite von Neumann algebra. For $\varepsilon, \delta > 0$ and a normal tracial state $\tau$ on $\kN$, we define,
\[
\scalebox{0.93}{$
O(\tau, \varepsilon, \delta) := \{A \in \kN : ~ \text{there is a projection } E \in \kN \text{ with } \tau(I_\kN - E) \le \delta, \text{ and } \|AE\| \le \varepsilon \}.
$}
\]
The translation-invariant topology generated by the fundamental system of neighborhoods of $0_\kN$, $\{O(\tau, \varepsilon, \delta)\}$, is called the $\mathfrak{m}$-topology of $\kN$. By \cite[Theorem 3.13]{nayak_MvN_alg}, this defines a Hausdorff topology.

The $\mathfrak{m}$-topology defines closeness between operators based on their uniform closeness on subspaces determined by ``large'' projections, effectively embodying the concept of local convergence in measure. In our discussion, terms such as `$\mathfrak{m}$-convergence', `$\mathfrak{m}$-limit', etc., are to be understood with their obvious meanings derived from the $\mathfrak{m}$-topology.
\end{definition}

\begin{thm}[see {\cite[Theorem 3.12]{nayak_MvN_alg}}]
\label{thm:compatibility_m-top}
\textsl{
Let $\kN$ be a finite von Neumann algebra. Then the maps,
\begin{align*}
 A  \mapsto A^* &: \kN \to \kN, \\
(A, B) \mapsto A + B &: \kN \times \kN \to \kN, \\
(A, B) \mapsto AB &: \kN \times \kN \to \kN,
\end{align*}
are Cauchy continuous with respect to the $\mathfrak{m}$-topology and therefore admit unique extensions to the $\mathfrak{m}$-completion of $\kN$, endowing it with the structure of a $*$-algebra.
}
\end{thm}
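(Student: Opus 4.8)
The statement is a standard fact about the measure ($\mathfrak m$-)topology, and the plan is to deduce it from a single description of the basic neighbourhoods $O(\tau,\varepsilon,\delta)$, then bootstrap. \textbf{Step 1 (neighbourhood lemma).} For $A\in\kN$ and a normal tracial state $\tau$, I would first prove that $A\in O(\tau,\varepsilon,\delta)$ if and only if $\tau\big(\chi_{(\varepsilon,\infty)}(|A|)\big)\le\delta$, where $\chi$ denotes the spectral projection. One direction takes $E=\chi_{[0,\varepsilon]}(|A|)$, for which $\|AE\|=\||A|E\|\le\varepsilon$ and $\tau(I_\kN-E)\le\delta$. For the other, given a witnessing projection $E$ and any $t>\varepsilon$, the spectral projection $P_t:=\chi_{(t,\infty)}(|A|)$ satisfies $P_t\wedge E=0$ (a unit vector in the common range would give $t^2\le\langle|A|^2\xi,\xi\rangle\le\varepsilon^2$), so Kaplansky's parallelogram law yields $\tau(P_t)=\tau(P_t\vee E)-\tau(E)\le\tau(I_\kN-E)\le\delta$; letting $t\downarrow\varepsilon$ and using normality of $\tau$ finishes it. A finite intersection $\bigcap_k O(\tau_k,\varepsilon_k,\delta_k)$ is then encoded by finitely many such inequalities.

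\textbf{Steps 2--3 (adjoint and sum).} Using the polar decomposition $A=U|A|$ one has $|A^*|=U|A|U^*$ with $U^*U$ dominating every $\chi_{(c,\infty)}(|A|)$ for $c>0$; hence $\chi_{(c,\infty)}(|A^*|)=U\chi_{(c,\infty)}(|A|)U^*$ is Murray--von Neumann equivalent to $\chi_{(c,\infty)}(|A|)$, so $\tau\big(\chi_{(c,\infty)}(|A^*|)\big)=\tau\big(\chi_{(c,\infty)}(|A|)\big)$; letting $c\downarrow\varepsilon$ and invoking Step~1 gives $O(\tau,\varepsilon,\delta)^{*}=O(\tau,\varepsilon,\delta)$, so $A\mapsto A^{*}$ preserves the defining uniformity and is therefore (uniformly, hence Cauchy) continuous. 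For the sum, $E\wedge F\le E$ forces $\|A(E\wedge F)\|\le\|AE\|$, whence $O(\tau,\varepsilon,\delta)+O(\tau,\varepsilon,\delta)\subseteq O(\tau,2\varepsilon,2\delta)$; by translation invariance this makes $(A,B)\mapsto A+B$ uniformly continuous.

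\textbf{Steps 4--5 (product).} I would first record the inclusion $O(\tau,\varepsilon_1,\delta_1)\cdot O(\tau,\varepsilon_2,\delta_2)\subseteq O(\tau,\varepsilon_1\varepsilon_2,\delta_1+\delta_2)$: if $E,F$ witness $A,B$, let $M$ be the right support projection of $E^{\perp}BF$, so $\tau(M)\le\tau(E^{\perp})\le\delta_1$ by equivalence with its left support; with $G:=F-M$ one gets $E^{\perp}BG=0$, hence $\|ABG\|=\|AE\cdot BG\|\le\varepsilon_1\|BF\|\le\varepsilon_1\varepsilon_2$ and $\tau(I_\kN-G)\le\delta_1+\delta_2$. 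The same surgery with one factor bounded shows $B\mapsto A_0B$ and $B\mapsto BA_0$ are uniformly continuous for fixed $A_0\in\kN$. Multiplication is \emph{not} uniformly continuous globally, so for Cauchy continuity I would use that any Cauchy net $(A_i)$ is $\mathfrak m$-bounded: for each $\delta_0>0$ there is $R>0$ with $A_i\in O(\tau,R,\delta_0)$ for all $i$ (the tail lies within $O(\tau,1,\delta_0)$ of a fixed term and the finitely many remaining terms are norm-bounded). Given Cauchy nets $(A_i),(B_i)$ and a target $O(\tau,\varepsilon,\delta)$, write $A_iB_i-A_jB_j=A_i(B_i-B_j)+(A_i-A_j)B_j$, bound $A_i$ and $B_j$ uniformly in $O(\tau,R,\delta/4)$, apply the product inclusion with the $\varepsilon$'s of $B_i-B_j$ and $A_i-A_j$ taken to be $\varepsilon/(4R)$ (valid for $i,j$ large), and combine via Step~3 to get $A_iB_i-A_jB_j\in O(\tau,\varepsilon,\delta)$ eventually; several traces are handled by intersecting finitely many such estimates.

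\textbf{Step 6 (extension and $*$-algebra structure).} The $\mathfrak m$-topology makes $(\kN,+)$ a Hausdorff topological abelian group (Hausdorffness being the cited Theorem~3.13), and completion of uniform spaces commutes with finite products, so $\widehat{\kN\times\kN}=\widehat{\kN}\times\widehat{\kN}$. Since a Cauchy continuous map into a dense subset of a complete Hausdorff uniform space extends uniquely to a continuous map of the completions, Steps~2--5 yield unique continuous extensions of $(\cdot)^{*}$, $\hat+$ and $\hat\cdot$ to $\widehat{\kN}$ (and, likewise, of each scalar multiplication $A\mapsto\lambda A$, uniformly continuous since $\lambda\,O(\tau,\varepsilon,\delta)\subseteq O(\tau,|\lambda|\varepsilon,\delta)$). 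Finally every algebra identity---associativity, distributivity, $\C$-bilinearity, $(AB)^{*}=B^{*}A^{*}$, $A^{**}=A$, and so on---is an equality of two continuous functions of the arguments agreeing on the dense set $\kN$ (respectively $\kN\times\kN$, $\kN^{3}$), hence persists on $\widehat{\kN}$, which is therefore a $*$-algebra. I expect Step~5 to be the main obstacle: multiplication genuinely fails to be uniformly continuous, and one must marry the $\mathfrak m$-boundedness of Cauchy nets to the support-projection surgery of Step~4; Step~1 is the technical lemma on which the rest rests.
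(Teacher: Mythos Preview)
The paper does not prove this theorem; it is quoted verbatim from \cite[Theorem 3.12]{nayak_MvN_alg} and used as a black box, so there is no in-paper argument to compare against. Your proposal is a correct self-contained proof along the standard Nelson--Terp lines: the spectral characterisation of $O(\tau,\varepsilon,\delta)$ in Step~1, the symmetry under adjoints via Murray--von Neumann equivalence of $\chi_{(c,\infty)}(|A|)$ and $\chi_{(c,\infty)}(|A^*|)$, the additive inclusion via $E\wedge F$, and the support-projection surgery for the product are all sound, and the density argument in Step~6 is the right way to transfer the $*$-algebra identities.

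One small point to tighten in Step~5: you write that a Cauchy net is $\mathfrak m$-bounded because ``the finitely many remaining terms are norm-bounded'', which is sequence language and false for general nets. What you actually need, and what your Cauchy argument uses, is only \emph{tail} $\mathfrak m$-boundedness: once $i\ge i_0$ with $A_i-A_{i_0}\in O(\tau,1,\delta_0)$, the tail sits in $O(\tau,\|A_{i_0}\|+1,\delta_0)$, and the estimate on $A_iB_i-A_jB_j$ only involves indices beyond such a threshold. With that cosmetic fix the argument goes through.
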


\begin{rem}
Let $\kN$ be a finite von Neumann algebra. From \cite[Theorem 4.3]{nayak_MvN_alg}, it follows that the $\mathfrak{m}$-completion of $\kN$ is naturally $*$-isomorphic to $\textrm{Aff}(\kN)$.

The set of positive operators in $\textrm{Aff}(\kN)$ defines a proper cone on the self-adjoint part of $\textrm{Aff}(\kN)$, $\textrm{Aff}(\kN)_{sa}$, which is used to endow it with an order structure. By \cite[Proposition 4.21]{nayak_MvN_alg}, $\textrm{Aff}(\kN)_{sa}$ is monotone-complete, that is, every bounded increasing net in $\textrm{Aff}(\kN)_{sa}$  has a least upper bound in $\textrm{Aff}(\kN)_{sa}$.
\end{rem}

\begin{definition}[{cf. \cite[Theorem 4.9]{nayak_MvN_alg}, \cite[Definition 4.12]{ghosh_nayak}}]
\label{def:Phi_aff}
Let $\kN, \kN'$, respectively, be finite von Neumann algebras acting on the Hilbert spaces $\kH, \kH'$, respectively, and $\Phi : \kN \to \kN '$ be a unital normal $*$-homomorphism.

By \cite[Theorem 4.9]{nayak_MvN_alg}, $\Phi$ is Cauchy continuous with respect to the $\mathfrak{m}$-topologies on $\kN$ and $\kN '$, and uniquely extends to an $\mathfrak{m}$-continuous unital $*$-homomorphism, $\Phi_{\textrm{aff}} : \textrm{Aff}(\kN) \to \textrm{Aff}(\kN ').$ By \cite[Theorem 4.3]{nayak_MvN_alg}, $\Phi_{\textrm{aff}}$ preserves $\hat{+}, \hat{\cdot}, *$, and by \cite[Theorem 4.24]{nayak_MvN_alg}, $\Phi_{\textrm{aff}}$ is equivalently a unital normal $*$-homomorphism. In fact, following the steps in the proof of \cite[Theorem 5.8]{ghosh_nayak}, we note that $\Phi_{\textrm{aff}}$ is the unique extension of $\Phi$ that preserves $\hat{+}, \hat{\cdot}, *$.
\end{definition}

\begin{rem}\label{rem:quotient_representation}
In \cite{ghosh_nayak}, Ghosh and the first-named author have investigated algebraic aspects of affiliated operators in the setting of general von Neumann algebras, which are, in particular, applicable to finite von Neumann algebras. Let $\kN$ be a finite von Neumann algebra, and $A \in \textrm{Aff}(\kN)$.  We note two types of quotient representation for an operator in $\textrm{Aff}(\kN)$, each with their own advantages for our applications.
\begin{enumerate}
    \item[(i)] By \cite[Theorem 5.4-(i)]{ghosh_nayak} and the discussion in \cite[\S 4.2]{ghosh_nayak}, there are bounded operators $P, Q \in \kN$ with $Q$ one-to-one such that $$\dom(A) = \ran(Q), ~\ran(A) = \ran(P), \textrm{ and }A = PQ^{-1}.$$ In other words, $A$ is given by the mapping, $Qx \mapsto Px$, for $x \in \kH$. (Note that $Q$ automatically has dense range as $\kN$ is finite, and thus $Q^{-1} \in \textrm{Aff}(\kN)$.)
    \item[(ii)] From \cite[Theorem 5.4-(ii)]{ghosh_nayak}, the operator $A$ is of the form $A = Q^{-1} P$ where $P, Q \in \kN$ with $Q$ being one-to-one. 
\end{enumerate}
\end{rem}

\begin{rem}
\label{rmrk:Phi_aff_alg}
Let $\kN, \kN'$, respectively, be finite von Neumann algebras acting on the Hilbert spaces $\kH, \kH'$, respectively, and $\Phi : \kN \to \kN '$ be a unital normal $*$-homomorphism. Let $A \in \textrm{Aff}(\kN)$, and $P$ be an operator in $\kN$ and $Q$ be a one-to-one operator in $\kN$ such that $A = Q^{-1}P$. By \cite[Lemma 2.2]{ghosh_nayak}, $\Phi(Q)$ is a one-to-one operator in $\kN '$ and from \cite[Theorem 5.5, Theorem 4.14]{ghosh_nayak}, we have 
\begin{equation}\label{eqn:Phi_aff_quotient}
    \Phi_{\textrm{aff}}(A) = \Phi(Q)^{-1} \Phi(P).
\end{equation}
In fact, it may be deduced from \cite[Theorem 5.8]{ghosh_nayak} that $\Phi_{\textrm{aff}}$ is the unique extension of $\Phi$ which maps operators of the form $Q^{-1}P$ to $\Phi(Q)^{-1} \Phi(P)$.
\end{rem}

\begin{lem}
\label{lem:aff_surjection}
\textsl{
Let $\kN$ be a finite von Neumann algebra, $\kN'$ be a von Neumann algebra, and $\Phi : \kN \to \kN'$ be a surjective unital normal $*$-homomorphism. Then we have the following.
\begin{itemize}
    \item[(i)] The kernel of $\Phi$ is a WOT-closed two-sided ideal of $\kN$ and there is a unique central projection $E$ of $\kN$ such that $\ker(\Phi) = \kN (I_\kN-E)$.  For every $A' \in \kN '$, there is a unique $A \in \kN$ in the preimage of $A'$ under $\Phi$ that satisfies $A = EAE$.  
    \item[(ii)] The von Neumann algebra $\kN'$ is finite.
    \item[(iii)] The map $\Phi_{\textrm{aff}} : \textrm{Aff}(\kN) \to \textrm{Aff}(\kN ')$, as defined in Definition \ref{def:Phi_aff}, is surjective. For every $A' \in \textrm{Aff}(\kN ')$, there is a unique $A \in \textrm{Aff}(\kN)$ in the preimage of $A'$ under $\Phi_{\textrm{aff}}$ that satisfies $A = EAE$.   
\end{itemize}
}
\end{lem}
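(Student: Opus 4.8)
The plan is to establish the three parts in sequence, each time leveraging the structure theory of von Neumann algebras together with the functorial properties of $\textrm{Aff}(\cdot)$ recorded in Definition \ref{def:Phi_aff} and Remark \ref{rmrk:Phi_aff_alg}. For part (i), I would first recall that the kernel of a normal $*$-homomorphism is a WOT-closed two-sided ideal (normality gives WOT-closedness, and the $*$-homomorphism property gives the ideal property). By the standard correspondence between WOT-closed two-sided ideals of a von Neumann algebra and its central projections (see \cite[Theorem 6.8.8]{KR-II}), there is a unique central projection $F$ with $\ker(\Phi) = \kN F$; set $E := I_\kN - F$, so $\ker(\Phi) = \kN(I_\kN - E)$. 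For the uniqueness statement about preimages: given $A' \in \kN'$, pick any $B$ with $\Phi(B) = A'$ and set $A := EBE$. Since $E$ is central and $\Phi(E) = \Phi(I_\kN) - \Phi(F) = I_{\kN'}$ (as $F \in \ker\Phi$), we get $\Phi(A) = \Phi(E)\Phi(B)\Phi(E) = A'$, and $A = EAE$ by construction. If $A_1 = EA_1E$ and $A_2 = EA_2E$ both map to $A'$, then $A_1 - A_2 \in \ker(\Phi) = \kN(I_\kN - E)$, so $(A_1 - A_2)E = 0$; combined with $A_i = EA_iE$ this forces $A_1 - A_2 = E(A_1-A_2)E = 0$.

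For part (ii), the cleanest route is to note that $E\kN E$ is a finite von Neumann algebra (a corner of a finite von Neumann algebra is finite, by \cite[Proposition 6.5.2]{KR-II} or the comparison theory of finite projections) and that $\Phi$ restricts to a $*$-isomorphism from $E\kN E$ onto $\kN'$: surjectivity follows from part (i) (every $A' \in \kN'$ has the preimage $EAE \in E\kN E$), and injectivity follows since the kernel of $\Phi|_{E\kN E}$ is $E\kN E \cap \kN(I_\kN - E) = \{0\}$. A $*$-isomorphic image of a finite von Neumann algebra is finite, so $\kN'$ is finite. (One should check $\Phi|_{E\kN E}$ is normal, hence a normal $*$-isomorphism, so it preserves the lattice of projections and finiteness of the identity; this is routine.)

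For part (iii), surjectivity of $\Phi_{\textrm{aff}}$: given $A' \in \textrm{Aff}(\kN')$, use Remark \ref{rem:quotient_representation}-(ii) to write $A' = (Q')^{-1}P'$ with $P', Q' \in \kN'$ and $Q'$ one-to-one. By part (i) choose $P, Q \in E\kN E$ with $\Phi(P) = P'$, $\Phi(Q) = Q'$. The operator $Q$ need not be one-to-one on all of $\kH$, but it is one-to-one when restricted to the range of $E$; working inside the finite von Neumann algebra $E\kN E$ (with identity $E$), $Q$ has dense range in $E\kH$, so $Q^{-1}$ makes sense as an element of $\textrm{Aff}(E\kN E)$ and $A := Q^{-1}P \in \textrm{Aff}(E\kN E) \subseteq \textrm{Aff}(\kN)$. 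By Remark \ref{rmrk:Phi_aff_alg} (equation \eqref{eqn:Phi_aff_quotient}), $\Phi_{\textrm{aff}}(A) = \Phi(Q)^{-1}\Phi(P) = (Q')^{-1}P' = A'$, and $A = EAE$ since $A \in \textrm{Aff}(E\kN E)$. For uniqueness: if $A_1, A_2 \in \textrm{Aff}(\kN)$ with $A_i = EA_iE$ and $\Phi_{\textrm{aff}}(A_i) = A'$, then $A_1 - A_2$ lies in the kernel of $\Phi_{\textrm{aff}}$ restricted appropriately; since $\Phi$ restricts to an isomorphism $E\kN E \to \kN'$, by the uniqueness clause at the end of Definition \ref{def:Phi_aff} / Remark \ref{rmrk:Phi_aff_alg} the extension $(\Phi|_{E\kN E})_{\textrm{aff}}$ is an isomorphism $\textrm{Aff}(E\kN E) \to \textrm{Aff}(\kN')$, forcing $A_1 = A_2$.

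I expect the main obstacle to be part (iii): one must be careful that "$Q^{-1}$" is interpreted inside the corner $E\kN E$ rather than in $\kN$ (where $Q$ genuinely fails to be injective), and that the identification $\textrm{Aff}(E\kN E) \hookrightarrow \textrm{Aff}(\kN)$ together with the functoriality of Remark \ref{rmrk:Phi_aff_alg} is applied consistently. The measure-theoretic/operator-theoretic bookkeeping of domains under this corner passage is where a careless argument would slip; everything else is a routine transcription of standard von Neumann algebra facts.
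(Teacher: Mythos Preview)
Your arguments for (i) and (ii) match the paper's essentially verbatim.

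For (iii), the paper takes a more direct route that dissolves the obstacle you flagged. Rather than working in the corner $E\kN E$ and invoking an identification $\textrm{Aff}(E\kN E)\hookrightarrow\textrm{Aff}(\kN)$, the paper lifts $Q'$ to any $Q\in\kN$ and then \emph{replaces $Q$ by $EQE+(I_\kN-E)$}. This modified $Q$ is genuinely one-to-one in $\kN$ (one-to-one on $E\kH$ since $\Phi(Q)=Q'$ is, and the identity on $(I_\kN-E)\kH$), so equation~\eqref{eqn:Phi_aff_quotient} applies directly to $A:=Q^{-1}P\in\textrm{Aff}(\kN)$. Your version, as written, invokes \eqref{eqn:Phi_aff_quotient} for $\Phi_{\textrm{aff}}$ with a $Q$ that is \emph{not} one-to-one in $\kN$, which is not what that remark licenses; fixing this via the corner requires you to relate $\Phi_{\textrm{aff}}$ to $(\Phi|_{E\kN E})_{\textrm{aff}}$ through the \emph{non-unital} inclusion $E\kN E\hookrightarrow\kN$, to which Definition~\ref{def:Phi_aff} does not directly apply. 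The $EQE+(I_\kN-E)$ trick is precisely the missing idea that makes this bookkeeping unnecessary.

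For uniqueness in (iii), the paper again argues directly: if $A=EAE$ lies in the kernel of $\Phi_{\textrm{aff}}$, write $A=Q^{-1}P$ with $Q$ one-to-one in $\kN$; then \eqref{eqn:Phi_aff_quotient} forces $\Phi(P)=0$, hence $PE=0$, hence $A=EAE=EQ^{-1}PE=0$. Your route via ``$(\Phi|_{E\kN E})_{\textrm{aff}}$ is an isomorphism'' is morally correct but presupposes that operators $A\in\textrm{Aff}(\kN)$ with $A=EAE$ can be identified with $\textrm{Aff}(E\kN E)$---which is exactly what the paper establishes \emph{later}, in Proposition~\ref{prop:restrictions}, using the present lemma as input. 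So as stated your uniqueness argument risks circularity.
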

\begin{proof}
(i) Since $\Phi$ is WOT-WOT continuous, it is clear that the kernel of $\Phi$ is a WOT-closed two-sided ideal of $\kN$. By \cite[Theorem 6.8.8]{KR-II}, there is a unique central projection $E$ of $\kN$ such that $\ker(\Phi) = \kN (I_\kN-E)$. 

For $A' \in \kN'$, let $B\in \kN$ be such that $\Phi(B) = A'$. Then, $ A:= EBE \in \kN$ is such that $\Phi(A) = \Phi(EBE) = \Phi(B)= A'$ and $EAE = E(EBE)E = EBE = A$. If $A_1, A_2 \in \kN$ are such that $\Phi(A_1) = A' = \Phi(A_2)$, and $EA_1E = A_1, EA_2E = A_2$, then $(A_1-A_2) (I_\kN-E) = A_1(I_\kN-E) - A_2(I_\kN-E) = 0_\kN$, and since $A_1 - A_2 \in \ker(\Phi) = \kN (I_\kN-E)$, we have $A_1 - A_2 = (A_1 - A_2)(I_\kN-E) = 0_\kN$.
\medskip

\noindent (ii) From part (i), it is clear that $\Phi : E\kN E \to \kN'$ is a unital normal $*$-isomorphism. Clearly $\kN'$ is finite as $E\kN E$ is finite (see \cite[Exercise 6.9.16]{KR-I}).
\medskip

\noindent (iii) Let $A' \in \textrm{Aff}(\kN')$. From Remark \ref{rem:quotient_representation}-(ii), $A' = Q'^{-1}P'$ where $P', Q' \in \kN '$ with $Q'$ one-to-one. Since $\Phi$ is surjective, there are operators $P, Q \in \kN$ such that $\Phi(P) = P', \Phi(Q) = Q'$. Replacing $Q$ with $EQE + (I_\kN-E)$ if necessary, we may assume that $Q$ is one-to-one. Using equation (\ref{eqn:Phi_aff_quotient}), $A = Q^{-1}P$ is in the preimage of $A'$. Thus, $\Phi_{\textrm{aff}}$ is surjective.

For $A' \in \textrm{Aff}(\kN')$, let $A \in \textrm{Aff}(\kN)$ be such that $\Phi_\textrm{aff}(A) = A'$, then $EAE \in \textrm{Aff}(\kN)$ is such that $\Phi_\textrm{aff}(EAE) = A'$, and $E(EAE)E = EAE.$ For the uniqueness part, it suffices to show that if $A \in \textrm{Aff}(\kN)$ is in the preimage of $0_{\kN'}$ and satisfies $A = EAE$, then $A=0_{\kN}$. 
Let $A$ be of the form $Q^{-1} P$ with $P, Q \in \kN$, and $Q$ one-to-one, so that $Q^{-1} \in \textrm{Aff}(\kN)$. Since $\Phi_\textrm{aff}(A) = 0_{\kN'}$, from equation (\ref{eqn:Phi_aff_quotient}), we must have $\Phi(P) = 0_{\kN '}$. That is, $P \in \ker(\Phi)$, and hence $PE = 0_{\kN}$. Thus, $A = EAE = ER^{-1}PE = 0_\kN.$
\end{proof}

\begin{lem}
\label{lem:norm_pow_func}
\textsl{
Let $\kN, \kN'$ be finite von Neumann algebras, $\Phi : \kN \to \kN '$ be a unital normal $*$-homomorphism, and $\Phi_{\textrm{aff}} : \textrm{Aff}(\kN) \to \textrm{Aff}(\kN ')$ be the extension of $\Phi$ as defined in Definition \ref{def:Phi_aff}. Let $A \in \textrm{Aff}(\kN)$. 
\begin{itemize}
    \item[(i)] For every $m \in \N$, we have $\Phi_{\textrm{aff}}(|A^m|^{\frac{1}{m}}) = |\Phi_{\textrm{aff}}(A)^m|^{\frac{1}{m}}$.
    \item[(ii)] If $\mlim_{m \in \N} |A^m|^{\frac{1}{m}} = B$, then $\mlim_{m \in \N} |\Phi_{\textrm{aff}}(A)^m|^{\frac{1}{m}} = \Phi_{\textrm{aff}}(B)$. 
\end{itemize}  
}
\end{lem}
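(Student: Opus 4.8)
The plan is to prove part (i) first and then deduce part (ii) by a routine application of $\mathfrak{m}$-continuity. For part (i), the key observation is that $\Phi_{\textrm{aff}}$ is a unital $*$-homomorphism that preserves $\hat{+}, \hat{\cdot}, *$, and — being equivalently a unital normal $*$-homomorphism by Definition \ref{def:Phi_aff} — it preserves the continuous functional calculus applied to (positive) self-adjoint affiliated operators. Concretely, I would argue as follows. Fix $m \in \N$ and write $B_m := A^m$ (computed via $\hat{\cdot}$ inside $\textrm{Aff}(\kN)$), so $|B_m| = (B_m^* \hat{\cdot}\, B_m)^{1/2}$. Since $\Phi_{\textrm{aff}}$ is a $*$-homomorphism compatible with the strong-product, $\Phi_{\textrm{aff}}(B_m) = \Phi_{\textrm{aff}}(A)^m$ and $\Phi_{\textrm{aff}}(B_m^* \hat{\cdot}\, B_m) = \Phi_{\textrm{aff}}(B_m)^* \hat{\cdot}\, \Phi_{\textrm{aff}}(B_m)$. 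It then remains to check that $\Phi_{\textrm{aff}}$ commutes with the positive square root and, more generally, with the $t \mapsto t^{1/m}$ functional calculus on the positive cone of $\textrm{Aff}(\kN)_{sa}$.

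For that last point — which I expect to be the main (though modest) obstacle — I would use the fact that $\Phi_{\textrm{aff}}$ is a \emph{normal} $*$-homomorphism preserving the order structure, together with the monotone-completeness of $\textrm{Aff}(\kN)_{sa}$. For a positive $C \in \textrm{Aff}(\kN)$, the square root $C^{1/2}$ can be characterized order-theoretically (e.g. as the $\mathfrak{m}$-limit, equivalently supremum, of polynomials in bounded truncations $C_k := C \wedge kI$, or via the bounded transform and spectral resolution), and a normal order-preserving $*$-homomorphism intertwines such suprema/limits; hence $\Phi_{\textrm{aff}}(C^{1/2}) = \Phi_{\textrm{aff}}(C)^{1/2}$. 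Iterating (or directly handling $t \mapsto t^{1/m}$ the same way) gives $\Phi_{\textrm{aff}}(|A^m|^{1/m}) = |\Phi_{\textrm{aff}}(A)^m|^{1/m}$. Alternatively, one can invoke the quotient representation in Remark \ref{rem:quotient_representation} and equation (\ref{eqn:Phi_aff_quotient}) together with the standard fact that $\Phi$ on the bounded part $\kN$ commutes with continuous functional calculus, then pass to the limit along the truncations $|A^m|^{1/m} \wedge kI$; I would likely present whichever of these is cleanest given the lemmas already cited.

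For part (ii), suppose $\mlim_{m} |A^m|^{1/m} = B$ in $\textrm{Aff}(\kN)$. By Definition \ref{def:Phi_aff}, $\Phi_{\textrm{aff}}$ is $\mathfrak{m}$-continuous, so applying it to the convergent sequence gives $\mlim_{m} \Phi_{\textrm{aff}}(|A^m|^{1/m}) = \Phi_{\textrm{aff}}(B)$. By part (i), $\Phi_{\textrm{aff}}(|A^m|^{1/m}) = |\Phi_{\textrm{aff}}(A)^m|^{1/m}$, and substituting yields $\mlim_{m} |\Phi_{\textrm{aff}}(A)^m|^{1/m} = \Phi_{\textrm{aff}}(B)$, which is exactly the claim. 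The only genuine content is therefore the functional-calculus compatibility in part (i); everything in part (ii) is formal once $\mathfrak{m}$-continuity of $\Phi_{\textrm{aff}}$ is granted.
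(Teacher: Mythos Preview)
Your proposal is correct and follows essentially the same approach as the paper. The paper's proof of (i) is identical in structure: it uses that $\Phi_{\textrm{aff}}$ is a $*$-homomorphism to get $\Phi_{\textrm{aff}}\big((A^m)^*A^m\big) = \big(\Phi_{\textrm{aff}}(A)^m\big)^*\Phi_{\textrm{aff}}(A)^m$, and then dispatches the functional-calculus compatibility in one line by citing an existing result (the functorial nature of the Borel function calculus, \cite[Corollary 6.11]{nayak_mat_alg_unbounded}) for the function $t \mapsto t^{1/(2m)}$ on $\R_{\ge 0}$, rather than rederiving it via truncations or order-theoretic characterizations as you sketch; part (ii) in the paper is, as you anticipated, a one-line appeal to part (i) and the $\mathfrak{m}$-continuity of $\Phi_{\textrm{aff}}$.
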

\begin{proof}
(i) Since $\Phi_\textrm{aff}$ is a $*$-homomorphism, $\Phi_{\textrm{aff}}\left( (A^m)^*A^m \right) = \left( \Phi_{\textrm{aff}}(A)^m \right)^*\Phi_{\textrm{aff}}(A)^m $ for all $m \in \N$. From the functorial nature of the Borel function calculus (see \cite[Corollary 6.11]{nayak_mat_alg_unbounded}) in the context of the continuous function $t \mapsto t^{\frac{1}{2m}}$ on $\R_{\ge 0}$, we have, $$\Phi_{\textrm{aff}}(|A^m|^{\frac{1}{m}}) = |\Phi_{\textrm{aff}}(A)^m|^{\frac{1}{m}} \textrm{ for every } m \in \N.$$

\vskip0.05in

\noindent (ii) The assertion follows from part (i) and the $\mathfrak{m}$-continuity of $\Phi_{\textrm{aff}}$. 
\end{proof}

\begin{definition}[$\mathfrak{u}$-scalar-type and $\m$-quasinilpotent operators]
Let $\kN$ be a finite von Neumann algebra acting on the Hilbert space $\kH$. We say that an operator $D \in \textrm{Aff}(\kN)$ is a {\it $\mathfrak{u}$-scalar-type} operator, if there is an invertible operator $S$ in $\textrm{Aff}(\kN)$ such that $S~\hat{\cdot}~D~\hat{\cdot}~S^{-1}$ is a normal operator in $\textrm{Aff}(\kN)$.

We say that an operator $N \in \textrm{Aff}(\kN)$ is {\it $\mathfrak{m}$-quasinilpotent} if the normalized power sequence of $N$, $\{ |N^k|^{\frac{1}{k}} \}_{k \in \N}$, converges to $0_{\kN}$ in the $\mathfrak{m}$-topology (see Definition \ref{def:m-top}).
\end{definition}

Since we consider unbounded similarity in the definition of $\mathfrak{u}$-scalar-type operators, it is not clear whether bounded $\mathfrak{u}$-scalar-type operators are necessarily scalar-type operators (see Definition \ref{def:scalar-type-B(H)}), in the sense of Dunford; the interested reader make take upon themselves to address this curiosity.

\subsection{Direct sum of Murray-von Neumann algebras}
In this subsection, we essentially define and work with the direct sum of Murray-von Neumann algebras, but only to the extent necessary for our problem of interest (see Theorem \ref{thm:main2}). A comprehensive categorical treatment, including its universal property, is deferred to a future study dedicated to the category of Murray-von Neumann algebras.

\begin{lem}
\label{lem:m_top_conv}
\textsl{
Let $\kN$ be a finite von Neumann algebra, and $(E_\gamma)_{\gamma \in \Gamma}$ be an increasing net of central projections in $\kN$ with least upper bound $I_\kN$. Let $(A_{i})_{i \in \Lambda}$ be a net of operators in $\textrm{Aff}(\kN)$ such that for every $\gamma \in \Gamma$, the net $(A_{i} E_{\gamma})_{i \in \Lambda}$ converges in the $\mathfrak{m}$-topology. Then $\{ A_{i} \}_{i \in \Lambda}$ converges in the $\mathfrak{m}$-topology.
}
\end{lem}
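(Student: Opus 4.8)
The plan is to exploit the $\mathfrak{m}$-completeness of $\textrm{Aff}(\kN)$ (equivalently, Hausdorffness plus completeness of the $\mathfrak{m}$-topology, from \cite[Theorem 3.13, Theorem 4.3]{nayak_MvN_alg}): it suffices to show that $(A_i)_{i \in \Lambda}$ is an $\mathfrak{m}$-Cauchy net. So I would fix a neighborhood $O(\tau, \varepsilon, \delta)$ of $0_\kN$ and produce an index $i_0 \in \Lambda$ such that $A_i ~\hat{-}~ A_j \in O(\tau, \varepsilon, \delta)$ for all $i, j \succeq i_0$.

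The key observation is that, since $\tau$ is a normal tracial state and $(E_\gamma)$ increases to $I_\kN$ with each $E_\gamma$ central, normality of $\tau$ gives $\tau(I_\kN - E_\gamma) \to 0$; hence I can fix $\gamma_0 \in \Gamma$ with $\tau(I_\kN - E_{\gamma_0}) < \delta/2$. By hypothesis the net $(A_i E_{\gamma_0})_{i \in \Lambda}$ is $\mathfrak{m}$-convergent, hence $\mathfrak{m}$-Cauchy, so there is $i_0 \in \Lambda$ such that for all $i,j \succeq i_0$ we have $A_i E_{\gamma_0} ~\hat{-}~ A_j E_{\gamma_0} \in O(\tau, \varepsilon, \delta/2)$; that is, there is a projection $F_{ij} \in \kN$ with $\tau(I_\kN - F_{ij}) \le \delta/2$ and $\|(A_i E_{\gamma_0} - A_j E_{\gamma_0}) F_{ij}\| \le \varepsilon$. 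Now set $G_{ij} := E_{\gamma_0} \wedge F_{ij}$. Since $E_{\gamma_0}$ is central, $(A_i - A_j) G_{ij} = (A_i - A_j) E_{\gamma_0} G_{ij} = (A_i E_{\gamma_0} - A_j E_{\gamma_0}) F_{ij} G_{ij}$, so $\|(A_i - A_j) G_{ij}\| \le \varepsilon$; and $\tau(I_\kN - G_{ij}) \le \tau(I_\kN - E_{\gamma_0}) + \tau(I_\kN - F_{ij}) \le \delta$. Therefore $A_i ~\hat{-}~ A_j \in O(\tau, \varepsilon, \delta)$, establishing the Cauchy condition for this neighborhood. (One should of course first upgrade the statement ``$A_i E_{\gamma_0} - A_j E_{\gamma_0}$'' to the affiliated-algebra operations $\hat{+}, \hat{\cdot}$; since $E_{\gamma_0} \in \kN$ is bounded, $A_i ~\hat{\cdot}~ E_{\gamma_0}$ agrees with the honest product $A_i E_{\gamma_0}$ up to closure, so no subtlety arises.)

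As the neighborhood $O(\tau, \varepsilon, \delta)$ was arbitrary and these sets form a fundamental system of neighborhoods of $0_\kN$ (over all normal tracial states $\tau$ and all $\varepsilon, \delta > 0$), the net $(A_i)_{i \in \Lambda}$ is $\mathfrak{m}$-Cauchy, and by completeness it $\mathfrak{m}$-converges in $\textrm{Aff}(\kN)$.

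\textbf{Main obstacle.} The only genuinely delicate point is bookkeeping with the projections: one must check that the ``large'' projection witnessing closeness of $A_i E_{\gamma_0} - A_j E_{\gamma_0}$ can be intersected with $E_{\gamma_0}$ without destroying either the norm bound (this uses centrality of $E_{\gamma_0}$ so that it commutes past the $A_i$'s) or the trace estimate (this uses subadditivity of $\tau(I_\kN - \cdot)$ on projections, i.e. $\tau(I_\kN - P \wedge Q) \le \tau(I_\kN - P) + \tau(I_\kN - Q)$, which is standard). Everything else is a routine unwinding of the definition of the $\mathfrak{m}$-topology and its completeness.
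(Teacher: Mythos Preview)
Your argument is correct and in fact more streamlined than the paper's. The paper does not prove Cauchyness of $(A_i)$ directly; instead it first sets $B_\gamma := \mlim_i (A_i E_\gamma)$, checks the compatibility $B_{\eta'}E_\eta = B_\eta$ for $\eta' \ge \eta$, uses this to show $(B_\gamma)_{\gamma \in \Gamma}$ is $\mathfrak{m}$-Cauchy with limit $B$, and then verifies $A_i \to B$ via essentially the same projection-intersection estimate you use (with $E_\alpha F_i$ in place of your $G_{ij}$). Your route bypasses the auxiliary net $(B_\gamma)$ entirely by going straight to the Cauchy condition for $(A_i)$ and invoking completeness; the only cost is that you do not obtain the explicit description $B E_\eta = \mlim_i (A_i E_\eta)$ along the way, but that is immediately recoverable afterwards from $\mathfrak{m}$-continuity of multiplication and is not part of the lemma's statement anyway. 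One cosmetic remark: since $E_{\gamma_0}$ is central you have $G_{ij} = E_{\gamma_0}F_{ij}$, so in fact $(A_i - A_j)G_{ij} = (A_i E_{\gamma_0} - A_j E_{\gamma_0})F_{ij}$ on the nose, and the trailing ``$G_{ij}$'' in your displayed chain is harmless but unnecessary.
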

\begin{proof}
For $\gamma \in \Gamma$, let $B_{\gamma} := \mlim_{i \in \Lambda} (A_{i} E_{\gamma})$. Since $\{ E_\gamma \}_{\gamma \in \Gamma}$ is an increasing net, for indices $\eta' \ge \eta$ in $\Gamma$, we have $E_{\eta'} E_\eta = E_\eta$, and hence $(A_{i} E_{\eta'})E_\eta = A_{i} E_\eta$, for all $i \in \Lambda.$ Using Theorem \ref{thm:compatibility_m-top}, and passing to $\mathfrak{m}$-limits, we have
\begin{equation}\label{eqn:prod_proj}
B_{\eta'} E_\eta = B_\eta \text{ whenever }  \eta' \ge \eta.
\end{equation}

\noindent Let $\tau$ be a normal tracial state  on $\kN$, and $\ep, \delta > 0$. Since $\{ E_\gamma \}_{\gamma \in \Gamma} \uparrow I_\kN$, the normality of $\tau$ tells us that  $\{ \tau(I_\kN-E_\gamma) \}_{\gamma \in \Gamma} \downarrow 0_{\kN}$ ; thus, there is an index $\eta \in \Gamma$ such that $\tau(I_\kN-E_\eta) \le \delta$. Then
$$\|(B_\gamma - B_{\gamma'})E_\eta\| = \|B_\eta-B_\eta\| = 0 \le \ep \text{ for all } \gamma, \gamma' \ge \eta,$$
shows that $B_{\gamma}-B_{\gamma'}$ lies in the fundamental neighborhood $O(\tau , \ep, \delta)$ for all $\gamma, \gamma' \ge \eta$. Thus, $\{B_\gamma\}_{\gamma \in \Gamma}$ is Cauchy, and hence convergent to an operator $B \in \mathrm{Aff(\kN)}$ in the $\mathfrak{m}$-topology. 

\vskip0.05in

Next we show that $(A_i)_{i\in\Lambda}$ converges to $B$ in the $\m$-topology. From equation (\ref{eqn:prod_proj}), and Theorem \ref{thm:compatibility_m-top} (joint $\mathfrak{m}$-continuity of multiplication), note that 
\begin{equation}\label{eqn:lim_prod_proj}
    B  E_\eta = \mlim_{\gamma \in \Gamma} B_\gamma  E_\eta = B_\eta \text{ for all } \eta \in \Gamma.
\end{equation}
Fix $\alpha \in \Gamma$ such that $\tau(I_\kN-E_\alpha) \le \frac{\delta}{2}$. Since $\mlim_{i \in \Lambda} A_i  E_\alpha = B_\alpha$, there is an index $j({\ep,\delta})$ such that for every for every $i \ge j({\ep,\delta})$, there is a projection $F_i \in \kN$ satisfying $\tau(I_\kN-F_i)\le \frac{\delta}{2}$ and $\|(A_i  E_\alpha - B_\alpha) F_i\| \le \ep$. Since $E_\alpha$ is a central projection, $E_\alpha F_i$ and $(I_\kN-E_\alpha)(I_\kN-F_i)$ are projections in $\kN$. Since $(I_\kN-E_\alpha)(I_\kN-F_i) \ge 0_{\kN}$, we see that 
$I_\kN - E_\alpha F_i \le (I_\kN-E_\alpha) + (I_\kN-F_i),$ whence $\tau(I_\kN-E_\alpha F_i) \le \tau(I_\kN-E_\alpha) + \tau(I_\kN-F_i).$
Then, $\tau(I_\kN-E_\alpha F_i) \le \delta$ for all $i \ge j({\ep,\delta})$. Moreover, using equation (\ref{eqn:lim_prod_proj}), we have
$$\|(A_i - B) E_\alpha F_i\| = \| (A_i  E_\alpha - B_\alpha)  F_i \| \le \ep ~\;~ \forall ~ i \ge j({\ep,\delta}).$$
Thus, $A_i - B \in O(\tau, \ep, \delta)$ for all $i \ge j({\ep,\delta})$, and we conclude that $\{A_i\}_{i\in \Lambda}$ converges to $B$ in the $\m$-topology.
\end{proof}

\begin{cor}
\label{cor:m_top_conv}
\textsl{
Let $\kN$ be a finite von Neumann algebra, and $\{ E_\gamma : \gamma \in \Gamma \}$ be a collection of mutually orthogonal central projections in $\kN$ such that $\sum_{\gamma \in \Gamma} E_\gamma = I_\kN$. Let $(A_i)_{i \in \Lambda}$ be a net of operators in $\textrm{Aff}(\kN)$ such that for every $\gamma \in \Gamma$, the net $(A_i  E_\gamma)_{i \in \Lambda}$ converges in the $\mathfrak{m}$-topology. Then $( A_i )_{i \in \Lambda}$ converges in the $\mathfrak{m}$-topology.
}
\end{cor}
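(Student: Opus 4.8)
The plan is to deduce this from Lemma \ref{lem:m_top_conv} by replacing the orthogonal family $\{E_\gamma\}_{\gamma \in \Gamma}$ with the increasing net of its finite partial sums. Concretely, let $\mathcal{F}$ denote the collection of finite subsets of $\Gamma$, directed by inclusion, and for $\mathbf{F} \in \mathcal{F}$ set $E_{\mathbf{F}} := \sum_{\gamma \in \mathbf{F}} E_\gamma$. Since the $E_\gamma$ are mutually orthogonal central projections, each $E_{\mathbf{F}}$ is again a central projection in $\kN$, and $(E_{\mathbf{F}})_{\mathbf{F} \in \mathcal{F}}$ is an increasing net. Its least upper bound in the projection lattice of $\kN$ is $\bigvee_{\gamma \in \Gamma} E_\gamma = \sum_{\gamma \in \Gamma} E_\gamma = I_\kN$, since any projection dominating every $E_{\{\gamma\}} = E_\gamma$ dominates the supremum of the orthogonal family, which coincides with their sum.

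Next I would verify the hypothesis of Lemma \ref{lem:m_top_conv} for this net, namely that $(A_i E_{\mathbf{F}})_{i \in \Lambda}$ is $\mathfrak{m}$-convergent for each $\mathbf{F} \in \mathcal{F}$. By orthogonality of the $E_\gamma$, we have $A_i E_{\mathbf{F}} = \sum_{\gamma \in \mathbf{F}} A_i E_\gamma$, a \emph{finite} sum; by hypothesis each net $(A_i E_\gamma)_{i \in \Lambda}$ converges in the $\mathfrak{m}$-topology, so the joint $\mathfrak{m}$-continuity of addition on $\textrm{Aff}(\kN)$ (Theorem \ref{thm:compatibility_m-top}) gives that $(A_i E_{\mathbf{F}})_{i \in \Lambda}$ converges, with limit $\sum_{\gamma \in \mathbf{F}} \mlim_{i \in \Lambda} A_i E_\gamma$.

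With both hypotheses of Lemma \ref{lem:m_top_conv} now in place --- the increasing net $(E_{\mathbf{F}})_{\mathbf{F} \in \mathcal{F}}$ of central projections with least upper bound $I_\kN$, and the $\mathfrak{m}$-convergence of each $(A_i E_{\mathbf{F}})_{i \in \Lambda}$ --- an application of that lemma yields that $(A_i)_{i \in \Lambda}$ converges in the $\mathfrak{m}$-topology, completing the proof. I do not anticipate any genuine obstacle here; the argument is essentially bookkeeping. The only point requiring a moment's care is the identification of the least upper bound of $(E_{\mathbf{F}})_{\mathbf{F}}$ with $I_\kN$, which rests on the standard fact that the supremum of a family of mutually orthogonal projections equals their (strong-operator convergent) sum.
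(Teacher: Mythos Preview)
Your proof is correct and follows essentially the same approach as the paper: pass to the increasing net of finite partial sums $E_{\mathbf{F}} = \sum_{\gamma \in \mathbf{F}} E_\gamma$, use the $\mathfrak{m}$-continuity of addition to verify convergence of each $(A_i E_{\mathbf{F}})_{i \in \Lambda}$, and then invoke Lemma~\ref{lem:m_top_conv}. The details you spell out (orthogonality giving that $E_{\mathbf{F}}$ is a projection, identification of the supremum with $I_\kN$) match the paper's reasoning exactly.
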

\begin{proof}
Let $\sF(\Lambda)$ be the set of all finite subsets of $\Lambda$ with the partial order given by set inclusion. For $\F \in \sF(\Lambda)$, let $E_\F := \sum_{\gamma \in \F} E_\gamma$. Clearly $(E_\F)_{\F \in \sF(\Lambda)}$ is an increasing net of central projections in $\kN$, directed by inclusion with $\{ E_\F \}_{\F \in \sF(\Lambda)} \uparrow I_\kN$. From Theorem \ref{thm:compatibility_m-top} ( $\mathfrak{m}$-continuity of addition), for $\F \in \sF(\Lambda)$ note that
$$\mlim_{i \in \Lambda} (A_i  E_\F) = \sum_{\gamma \in \F} \mlim_{i \in \Lambda} (A_i E_\gamma).$$
Thus, the net $(A_i  E_\F)_{i \in \Lambda}$ converges in the $\m$-topology for every $\F \in \sF(\Lambda)$. From  Lemma \ref{lem:m_top_conv}, the net $(A_i)_{i \in \Lambda}$ converges in the $\m$-topology.
\end{proof}

\begin{prop}
\label{prop:restrictions}
\textsl{
Let $\kN$ be a finite von Neumann algebra acting on the Hilbert space $\kH$, and $E$ be a central projection in $\kN$. \big(Note that for $A \in \kN$, $AE= EAE$ implies that the range of $ A|_{E(\kH)}$ lies in $E(\kH)$, and allows us to view $A|_{E(\kH)}$ as an operator in $\kB\big(E(\kH)\big)$\big). Then, we have the following:
\begin{enumerate}
    \item [(i)] The restriction mapping $\Pi|_{E} : \kN \to \kB\big(E(\kH)\big)$ given by, $A \mapsto A|_{E(\kH)}$, is a unital normal $*$-homomorphism. Moreover, its image, which we denote by $\kN|_E$, is a finite von Neumann algebra acting on $E(\kH)$.
    \item [(ii)] The map $(\Pi|_E)_\textrm{aff}  : \textrm{Aff}(\kN) \to \textrm{Aff}(\kN|_E)$, which is the extension of $\Pi|_E$ as described in Definition \ref{def:Phi_aff}, is the restriction mapping, $A \mapsto A|_{E(\kH)}$, where $A|_{E(\kH)}$ denotes the restriction of $A$ to $\dom(A) \cap E(\kH)$.
    \item[(iii)] For an operator $A' \in \textrm{Aff}(\kN|_E)$, there is a unique operator $A \in \textrm{Aff}(\kN)$ such that $A = EAE$, and $(\Pi|_E)_{\textrm{aff}}(A) = A'$. In fact, $A = A'E$. 
\end{enumerate}
}
\end{prop}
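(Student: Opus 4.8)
The plan is to prove the three parts in order, with part (i) establishing the basic von Neumann algebraic facts, part (ii) identifying the canonical extension $(\Pi|_E)_{\textrm{aff}}$ with the honest restriction map, and part (iii) extracting the preferred preimage, all of which flow from the general machinery about $\Phi_{\textrm{aff}}$ recorded earlier (Definition \ref{def:Phi_aff}, Remark \ref{rmrk:Phi_aff_alg}, Lemma \ref{lem:aff_surjection}).

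For part (i), I would first observe that since $E$ is a central projection, for $A \in \kN$ we have $AE = EAE = EA$, so $A$ leaves $E(\kH)$ invariant and $A|_{E(\kH)} \in \kB\big(E(\kH)\big)$; the map $\Pi|_E : A \mapsto A|_{E(\kH)}$ is then evidently linear, multiplicative, unital, and adjoint-preserving (because $E$ commutes with adjoints). Normality follows since for a bounded increasing net $A_\gamma \uparrow A$ in $\kN_{sa}$ one has $A_\gamma|_{E(\kH)} \uparrow A|_{E(\kH)}$ in $\kB\big(E(\kH)\big)_{sa}$. The image $\kN|_E$ is $*$-isomorphic to $\kN E$ (a von Neumann algebra acting on $E(\kH)$), and $\kN E$ is finite because it is a direct summand (equivalently a corner $E\kN E$ with $E$ central) of the finite algebra $\kN$, cf. \cite[Exercise 6.9.16]{KR-I}; so $\kN|_E$ is a finite von Neumann algebra. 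Thus $\Pi|_E$ is a surjective unital normal $*$-homomorphism onto $\kN|_E$ with kernel $\kN(I_\kN - E)$, matching the hypotheses of Lemma \ref{lem:aff_surjection} with the central projection there equal to $E$.

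For part (ii), write $A \in \textrm{Aff}(\kN)$ in quotient form $A = Q^{-1}P$ with $P, Q \in \kN$ and $Q$ one-to-one (Remark \ref{rem:quotient_representation}-(ii)); by replacing $Q$ with $EQE + (I_\kN - E)$ if necessary I may assume $Q$ is one-to-one and $Q(I_\kN - E) = (I_\kN - E)$, so $Q|_{E(\kH)}$ is one-to-one on $E(\kH)$. By Remark \ref{rmrk:Phi_aff_alg}, equation (\ref{eqn:Phi_aff_quotient}), $(\Pi|_E)_{\textrm{aff}}(A) = (Q|_{E(\kH)})^{-1}(P|_{E(\kH)})$, which is precisely the operator $Q x \mapsto P x$ for $x \in E(\kH)$, i.e.\ the restriction of the map $Qx \mapsto Px$ to $\dom(A) \cap E(\kH)$; since $\dom(A) = \ran(Q)$ and $Q$ maps $E(\kH)$ into $E(\kH)$, this is exactly $A|_{E(\kH)}$ with the stated domain $\dom(A) \cap E(\kH)$. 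I should check that this restricted operator is densely defined and closed in $E(\kH)$, which follows since $Q|_{E(\kH)}$ has dense range in $E(\kH)$ (as $\kN|_E$ is finite) and the graph of $A|_{E(\kH)}$ is the intersection of the closed graph of $A$ with the closed subspace $E(\kH) \oplus E(\kH)$.

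For part (iii), given $A' \in \textrm{Aff}(\kN|_E)$, apply Lemma \ref{lem:aff_surjection}-(iii) to the surjective unital normal $*$-homomorphism $\Pi|_E$: there is a unique $A \in \textrm{Aff}(\kN)$ with $(\Pi|_E)_{\textrm{aff}}(A) = A'$ and $A = EAE$. It remains to identify this $A$ explicitly as $A'E$. Here I interpret $A'E$ as the strong-product $A' \,\hat\cdot\, E$ in $\textrm{Aff}(\kN)$, where $E$ is regarded as a central projection of $\kN$ and $A' \in \textrm{Aff}(\kN|_E) \cong (I_\kN - E) + \textrm{Aff}(\kN E)$ is viewed inside $\textrm{Aff}(\kN)$ by extending by $0$ on $(I_\kN - E)(\kH)$; then $A'E$ annihilates $(I_\kN - E)(\kH)$, satisfies $E(A'E)E = A'E$, and restricts on $E(\kH)$ to $A'$, so by the uniqueness clause just invoked it must be the operator $A$. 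The main obstacle I anticipate is purely bookkeeping: keeping straight the three ambient algebras $\kN$, $\kN|_E \cong \kN E$, and $\kB\big(E(\kH)\big)$, and verifying that the domain of the restricted affiliated operator behaves as claimed under the quotient-representation formula — there are no deep difficulties, since everything reduces to the already-established functoriality of $\Phi \mapsto \Phi_{\textrm{aff}}$ and the structure of kernels of normal $*$-homomorphisms via central projections.
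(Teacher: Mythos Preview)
Your overall strategy matches the paper's: part (i) via standard facts plus Lemma \ref{lem:aff_surjection}-(i)-(ii), part (ii) via a quotient representation and equation (\ref{eqn:Phi_aff_quotient}), and part (iii) directly from Lemma \ref{lem:aff_surjection}-(iii).

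There is, however, a slip in your execution of (ii). You cite representation (ii), $A = Q^{-1}P$, but then reason as if you were using representation (i), $A = PQ^{-1}$: the description ``the operator $Qx \mapsto Px$'' and the claim ``$\dom(A) = \ran(Q)$'' are both correct for $PQ^{-1}$, not for $Q^{-1}P$ (whose domain is $P^{-1}\big(\ran(Q)\big)$). The paper uses form (i) throughout, which yields the clean domain computation
\[
\dom\big((\Pi|_E)_{\textrm{aff}}(A)\big) = \ran\big(Q|_{E(\kH)}\big) = \ran(QE) = \ran(Q) \cap E(\kH) = \dom(A) \cap E(\kH).
\]
Your replacement $Q \to EQE + (I_\kN - E)$ is also unnecessary and in fact breaks the factorization of $A$: since $\Pi|_E$ is a unital normal $*$-homomorphism and $Q$ is one-to-one, $Q|_{E(\kH)}$ is automatically one-to-one by \cite[Lemma 2.2]{ghosh_nayak} (as recorded in Remark \ref{rmrk:Phi_aff_alg}), so no modification of $Q$ is needed. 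With form (i) used consistently and the replacement step removed, your argument coincides with the paper's.
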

\begin{proof}
It is straightforward to verify that $\Pi|_E$ is a unital $*$-homomorphism. Let $(A_i)_{i \in \Lambda}$ be a net in $\kN$, SOT-convergent to $A \in \kN$. Clearly, $A_i|_{E(\kH)} \to A|_{E(\kH)}$ in SOT. It follows that $\Pi|_E$ is SOT-SOT continuous, whence it is normal using \cite[Theorem 7.1.12]{KR-II}. Thus, $\Pi|_E :\kN \to \kN|_E$ is a surjective unital normal $*$-homomorphism.

\medskip

\noindent (i) Follows immediately from Lemma \ref{lem:aff_surjection}-(i)-(ii).

\medskip 
\noindent (ii) 
Let $A \in \textrm{Aff}(\kN)$. As noted in Remark \ref{rem:quotient_representation}-(i), $A = PQ^{-1}$, for some $P,Q \in \kN$, with $Q$ one-to-one, and $\dom(A) = \ran(Q)$.  From equation (\ref{eqn:Phi_aff_quotient}) in Remark \ref{rmrk:Phi_aff_alg}, we see that, $$(\Pi|_E)_{\textrm{aff}}(A) = (\Pi|_E)_{\textrm{aff}} (PQ^{-1}) = \Pi|_E(P) \Pi_E(Q)^{-1} =  P|_{E(\kH)} (Q|_{E(\kH)})^{-1}.$$ 
Thus, in particular,
\begin{align*}
\dom \left( \left( \Pi|_E \right)_{\textrm{aff}}(A)  \right) =\ran( Q|_{E(\kH)} ) &= \ran(QE) = \ran(Q) \cap E(\kH) \\
&= \dom(A) \cap E(\kH).
\end{align*} Since $\dom(A)$ is dense in $\kH$, by \cite[Theorem 5.5-(i)]{ghosh_nayak}, we note that $\dom(A) \cap E(\kH)$ is a dense linear subspace of $E(\kH)$. It is easily verified that $ (\Pi|_E)_{\textrm{aff}}(A)  = P|_{E(\kH)} (Q|_{E(\kH)})^{-1}$ is the restriction of $A=PQ^{-1}$ to $E(\kH)$.
\medskip

\noindent (iii) Follows immediately from Lemma \ref{lem:aff_surjection}-(iii)
\end{proof}

\begin{lem}
\label{lem:restrictions_normal,scalar-type,m-quasinil}
\textsl{
Let $ \kN$ be a finite von Neumann algebra acting on the Hilbert space $\kH$, and $E$ be a central projection in $\kN$. Then, we have the following:
\begin{itemize}
    \item[(i)] If $S \in \textrm{Aff}( \kN)$ is invertible, then so is $S|_{E(\kH)} \in \textrm{Aff}( \kN|_E)$;
    \item[(ii)] If $T \in \textrm{Aff}( \kN)$ is normal, then so is $T|_{E(\kH)} \in \textrm{Aff}( \kN|_E)$;
    \item[(iii)] If $D \in \textrm{Aff}( \kN)$ is $\mathfrak{u}$-scalar-type, then so is $D|_{E(\kH)} \in \textrm{Aff}( \kN|_E)$;
    \item[(iv)] If $N \in \textrm{Aff}( \kN)$ is $\mathfrak{m}$-quasinilpotent, then so is $N|_{E(\kH)} \in \textrm{Aff}( \kN|_E)$. 
\end{itemize}
}
\end{lem}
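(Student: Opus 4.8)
The plan is to leverage Proposition \ref{prop:restrictions}: the restriction mapping $(\Pi|_E)_{\textrm{aff}} : \textrm{Aff}(\kN) \to \textrm{Aff}(\kN|_E)$, $A \mapsto A|_{E(\kH)}$, is the $\textrm{Aff}$-extension of the surjective unital normal $*$-homomorphism $\Pi|_E$, and by Definition \ref{def:Phi_aff} it preserves $\hat{+}, \hat{\cdot}$ and $*$. All four assertions are then formal consequences of this, together with Lemma \ref{lem:norm_pow_func} for part (iv). In other words, the content is merely that a unital normal $*$-homomorphism of Murray-von Neumann algebras sends invertible, normal, $\mathfrak{u}$-scalar-type, and $\m$-quasinilpotent operators to operators of the same respective kind; we apply this to $\Phi = \Pi|_E$.

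For (i), apply the unital homomorphism $(\Pi|_E)_{\textrm{aff}}$ to $S \hat{\cdot} S^{-1} = S^{-1} \hat{\cdot} S = I_\kN$ to conclude that $S|_{E(\kH)}$ is invertible in $\textrm{Aff}(\kN|_E)$ with inverse $(S^{-1})|_{E(\kH)}$. For (ii), first note that for a closed densely-defined $A \in \textrm{Aff}(\kN)$ the operators $A^*A$ and $AA^*$ are already closed (being self-adjoint by von Neumann's theorem), so the strong products satisfy $A^* \hat{\cdot} A = A^*A$ and $A \hat{\cdot} A^* = AA^*$; hence $A$ is a normal operator exactly when $A^* \hat{\cdot} A = A \hat{\cdot} A^*$. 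Applying the $*$-homomorphism $(\Pi|_E)_{\textrm{aff}}$ to this identity for $T$ yields $(T|_{E(\kH)})^* \hat{\cdot} (T|_{E(\kH)}) = (T|_{E(\kH)}) \hat{\cdot} (T|_{E(\kH)})^*$, so $T|_{E(\kH)}$ is normal. For (iii), combine (i) and (ii): if $S$ is invertible in $\textrm{Aff}(\kN)$ and $S \hat{\cdot} D \hat{\cdot} S^{-1}$ is normal, then $(S \hat{\cdot} D \hat{\cdot} S^{-1})|_{E(\kH)} = S|_{E(\kH)} \hat{\cdot} D|_{E(\kH)} \hat{\cdot} (S|_{E(\kH)})^{-1}$ is normal with $S|_{E(\kH)}$ invertible, so $D|_{E(\kH)}$ is $\mathfrak{u}$-scalar-type. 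For (iv), $\m$-quasinilpotence of $N$ means $\mlim_{k \in \N} |N^k|^{\frac{1}{k}} = 0_\kN$; applying Lemma \ref{lem:norm_pow_func}-(ii) with $\Phi = \Pi|_E$ and $B = 0_\kN$ gives $\mlim_{k \in \N} |(N|_{E(\kH)})^k|^{\frac{1}{k}} = (\Pi|_E)_{\textrm{aff}}(0_\kN) = 0_{\kN|_E}$, so $N|_{E(\kH)}$ is $\m$-quasinilpotent.

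There is essentially no genuine obstacle; the only step meriting care is the observation in (ii) that algebraic commutation with the adjoint in $\textrm{Aff}(\kN)$ coincides with operator-theoretic normality (so that the $*$-homomorphism argument applies), after which everything reduces to $(\Pi|_E)_{\textrm{aff}}$ being a $*$-homomorphism and to the previously established Lemma \ref{lem:norm_pow_func}.
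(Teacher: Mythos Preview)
Your proof is correct and follows essentially the same approach as the paper: both exploit that $(\Pi|_E)_{\textrm{aff}}$ is a unital $*$-homomorphism (from Proposition~\ref{prop:restrictions} and Definition~\ref{def:Phi_aff}) to handle (i)--(iii), and invoke Lemma~\ref{lem:norm_pow_func}-(ii) for (iv). The only cosmetic difference is that the paper cites results from \cite{ghosh_nayak} for parts (i) and (ii), whereas you give the direct $*$-algebraic argument, which is entirely equivalent.
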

\begin{proof}
Let $(\Pi|_{E})_{\textrm{aff}} : \textrm{Aff}(\kN) \to \textrm{Aff}(\kN|_E)$ be the unital normal $*$-homomorphism as described in Proposition \ref{prop:restrictions}.

\vskip0.05in

\noindent (i) Follows from Theorem 5.5 and Theorem 4.14-(iii) of \cite{ghosh_nayak} in the context of $(\Pi|_{E})_{\textrm{aff}}$.

\vskip0.05in

\noindent (ii) Follows from Theorem 5.5 and Theorem 6.6-(vi) of \cite{ghosh_nayak} in the context of $(\Pi|_{E})_{\textrm{aff}}$.

\vskip0.05in

\noindent (iii)  Since $D \in \textrm{Aff}(\kN)$ is $\mathfrak{u}$-scalar-type, there exists an invertible operator $S$, and a normal operator $T$, in $ \textrm{Aff}(\kN)$ such that $D = S^{-1}~\hat{\cdot}~T~\hat{\cdot}~S$. Using \cite[Theorem 4.14-(iii)]{ghosh_nayak}, note that  $(\Pi|_{E})_{\textrm{aff}}(S^{-1}) = (\Pi|_{E})_{\textrm{aff}}(S)^{-1}$. Thus,
$$D|_{E(\kH)} = (\Pi|_{E})_{\textrm{aff}}(D) =  (\Pi|_{E})_{\textrm{aff}}(S)^{-1}~\hat{\cdot}~(\Pi|_{E})_{\textrm{aff}}(T)~\hat{\cdot}~ (\Pi|_{E})_{\textrm{aff}} (S),$$
is $\mathfrak{u}$-scalar-type.

\vskip0.05in

\noindent (iv) Follows from Lemma \ref{lem:norm_pow_func}-(ii).
\end{proof}

\begin{prop}
\label{prop:dir_sum_aff}
\textsl{
Let $\kN$ be a finite von Neumann algebra, acting on the Hilbert space $\kH$, and $\{E_i : i \in \Lambda\}$ be a collection of mutually orthogonal central projections in $\kN$ partitioning the identity operator, that is, $\sum_{i \in \Lambda} E_i = I_\kN$.  If $\{ A_i : i \in \Lambda \}$ is a collection of operators such that $A_i \in \textrm{Aff}(\kN|_{E_i})$, then there is a unique operator $A \in \textrm{Aff}(\kN)$ such that $(\Pi|_{E_i})_{\textrm{aff}} (A) = A_i$ for all $i \in \Lambda$. We define $\mathsmaller{\bigoplus}_{i \in \Lambda} A_i := A$.
}
\end{prop}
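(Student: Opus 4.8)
The plan is to realise $A$ as an $\mathfrak{m}$-limit of finite partial sums of the canonical lifts of the $A_i$ into $\textrm{Aff}(\kN)$, and to recover its images under the restriction homomorphisms from the functoriality recorded in Proposition~\ref{prop:restrictions}. First I would, for each $i \in \Lambda$, invoke Proposition~\ref{prop:restrictions}-(iii) to obtain the unique operator $\widetilde{A}_i \in \textrm{Aff}(\kN)$ with $\widetilde{A}_i = E_i \widetilde{A}_i E_i$ and $(\Pi|_{E_i})_{\textrm{aff}}(\widetilde{A}_i) = A_i$; explicitly $\widetilde{A}_i = A_i E_i$. For $\F$ in the directed set $\sF(\Lambda)$ of finite subsets of $\Lambda$ (ordered by inclusion), set $S_\F := \sum_{i \in \F}\widetilde{A}_i \in \textrm{Aff}(\kN)$.

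The crucial point is that cutting by a single central projection collapses these partial sums. For a fixed $\gamma \in \Lambda$, mutual orthogonality of the $E_i$ gives $E_iE_\gamma = \delta_{i\gamma}E_\gamma$, so $\widetilde{A}_i \,\hat{\cdot}\, E_\gamma = E_i \,\hat{\cdot}\, \widetilde{A}_i \,\hat{\cdot}\, (E_iE_\gamma) = \delta_{i\gamma}\widetilde{A}_\gamma$, and hence $S_\F \,\hat{\cdot}\, E_\gamma$ equals $\widetilde{A}_\gamma$ when $\gamma \in \F$ and $0_\kN$ otherwise. In particular each net $(S_\F \,\hat{\cdot}\, E_\gamma)_{\F}$ is eventually constant, hence $\mathfrak{m}$-convergent, and Corollary~\ref{cor:m_top_conv} applied to $(S_\F)_{\F \in \sF(\Lambda)}$ produces $A := \mlim_{\F} S_\F \in \textrm{Aff}(\kN)$. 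To check $(\Pi|_{E_j})_{\textrm{aff}}(A) = A_j$, I would use that $(\Pi|_{E_j})_{\textrm{aff}}$ is an $\mathfrak{m}$-continuous unital $*$-homomorphism (Definition~\ref{def:Phi_aff}): then $(\Pi|_{E_j})_{\textrm{aff}}(A) = \mlim_{\F}\sum_{i \in \F}(\Pi|_{E_j})_{\textrm{aff}}(\widetilde{A}_i)$; since $(\Pi|_{E_j})_{\textrm{aff}}(E_i)$ is the restriction of $E_i$ to $E_j(\kH)$, which is zero for $i \neq j$, every summand with $i \neq j$ vanishes while the $i = j$ term is $A_j$, so the partial sums are eventually $A_j$ and the claim follows.

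For uniqueness, suppose $A'$ also satisfies the conclusion and set $B := A \,\hat{+}\, (-A') \in \textrm{Aff}(\kN)$, so $(\Pi|_{E_j})_{\textrm{aff}}(B) = 0$ for all $j$. Each $E_j$ is central in $\kN$, hence commutes in strong-product with every operator of $\textrm{Aff}(\kN)$ (this follows from the $\mathfrak{m}$-density of $\kN$ in $\textrm{Aff}(\kN)$ together with the joint $\mathfrak{m}$-continuity of multiplication, Theorem~\ref{thm:compatibility_m-top}); therefore $B \,\hat{\cdot}\, E_j = E_j \,\hat{\cdot}\, (B \,\hat{\cdot}\, E_j) \,\hat{\cdot}\, E_j$, and applying $(\Pi|_{E_j})_{\textrm{aff}}$ shows $B \,\hat{\cdot}\, E_j$ lies in the preimage of $0_{\kN|_{E_j}}$. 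The uniqueness clause of Proposition~\ref{prop:restrictions}-(iii) (equivalently Lemma~\ref{lem:aff_surjection}-(iii)) then forces $B \,\hat{\cdot}\, E_j = 0_\kN$ for every $j$. Finally, $E_\F := \sum_{i \in \F}E_i$ increases to $I_\kN$ and $\mathfrak{m}$-converges to it --- for any normal tracial state $\tau$ and $\ep, \delta > 0$, taking the projection $E_\F$ itself in the definition of $O(\tau, \ep, \delta)$ gives $\|(I_\kN - E_\F)E_\F\| = 0$ while $\tau(I_\kN - E_\F) \to 0$ by normality --- so joint $\mathfrak{m}$-continuity of multiplication yields $B = B \,\hat{\cdot}\, I_\kN = \mlim_{\F}\sum_{i \in \F}(B \,\hat{\cdot}\, E_i) = 0_\kN$, i.e. $A = A'$. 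Then $\bigoplus_{i \in \Lambda}A_i := A$ is well defined.

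The genuine content is confined to the two $\mathfrak{m}$-convergence assertions --- for $(S_\F)_\F$ and for $(E_\F)_\F$ --- and both are dispatched by the single observation that cutting by one central projection trivialises the relevant partial sum, combined with Corollary~\ref{cor:m_top_conv} and the explicit description of the $\mathfrak{m}$-neighbourhoods $O(\tau, \ep, \delta)$. The remainder is formal manipulation with the restriction homomorphisms $(\Pi|_{E_i})_{\textrm{aff}}$; the one fact worth isolating is that the central projections of $\kN$ remain central in $\textrm{Aff}(\kN)$, which the argument uses twice.
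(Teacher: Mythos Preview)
Your proof is correct and follows essentially the same approach as the paper: both construct $A$ as the $\mathfrak{m}$-limit of the finite partial sums $\sum_{i\in\F}A_iE_i$ via Corollary~\ref{cor:m_top_conv}, and both deduce uniqueness from the fact that any candidate $A'$ satisfies $A'E_i = A_iE_i$ together with $A' = \msum_{i}A'E_i$. Your write-up is more explicit in two places---you name the partial-sum net $(S_\F)_{\F\in\sF(\Lambda)}$ directly rather than referring to the family $(A_iE_i)_{i\in\Lambda}$, and you spell out why $E_\F \to I_\kN$ in the $\mathfrak{m}$-topology for the uniqueness step---but the substance is identical.
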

\begin{proof}
For $i \in \Lambda$, note that $A_iE_i \in \textrm{Aff}(\kN)$, as described in Proposition \ref{prop:restrictions}-(iii). Then $(A_iE_i)_{i \in \Lambda}$ is a net of operators in $\textrm{Aff}(\kN)$.
Since for each $k \in \Lambda$, the net $\big((A_iE_i)E_k\big)_{i \in \Lambda}$ has all but only one term, corresponding to $k$, equal to $0_{\kN}$, it is convergent in the $\m$-topology.
From Corollary \ref{cor:m_top_conv}, the net $(A_iE_i)_{i \in \Lambda}$ converges in $\m$-topology to an operator $A \in \textrm{Aff}(\kN)$, given by 
$$A = \mlim_{ \F \in \sF(\Lambda) }\Big(\mlim_{i \in \Lambda}\big(A_iE_i\mathsmaller{\sum}_{j\in\F}E_j\big)\Big)= \mlim_{ \F \in \sF(\Lambda) }\Big(\sum_{j \in \F} A_jE_j\Big) = \msum_{i \in \Lambda}A_iE_i.$$
From the $\m$-continuity of $(\Pi|_{E_i})_\textrm{aff}$ (see Proposition \ref{prop:restrictions}-(ii)), 
$$(\Pi|_{E_i})_\textrm{aff}(A) = (\Pi|_{E_i})_\textrm{aff}\Big(\msum_{j \in \Lambda} A_j E_j \Big) = A_i.$$ 
Let $A' \in \textrm{Aff}(\kN)$ be another operator such that $(\Pi|_{E_i})_\textrm{aff}(A')=A_i$ for all $i \in \Lambda$. Then, $A'E_i = A_iE_i = AE_i$, and hence $ A' = \sum_{i \in \Lambda}A'E_i = \sum_{i \in \Lambda}AE_i = A$.
\end{proof}

\begin{lem}
\label{lem:prod-m-summable}
\textsl{
Let $ \kN$ be a finite von Neumann algebra acting on the Hilbert space $\kH$, and $\{E_i\}_{i \in \Lambda}$ be a family of mutually orthogonal central projections in $ \kN$ such that $\sum_{i \in \Lambda} E_i = I_\kN$. For $i \in \Lambda$, let $R_i, S_i \in \textrm{Aff}( \kN|_{E_i})$. Then,
\begin{align}
\label{eqn:directsum-product}
\mathsmaller{\bigoplus}_{i \in \Lambda} (R_i ~\hat{\cdot}~S_i) &= \Big( \mathsmaller{\bigoplus}_{i \in \Lambda} R_i \Big) ~\hat{\cdot}~\Big( \mathsmaller{\bigoplus}_{i \in \Lambda} S_i \Big),\\
\label{eqn:directsum-adjoint}
(\mathsmaller{\bigoplus}_{i \in \Lambda} R_i )^* &= \mathsmaller{\bigoplus}_{i \in \Lambda} R_i^*.
\end{align}
}
\end{lem}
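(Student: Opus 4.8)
The plan is to deduce both identities directly from the universal property of the direct sum recorded in Proposition \ref{prop:dir_sum_aff}: for any family $\{A_i\}_{i \in \Lambda}$ with $A_i \in \textrm{Aff}(\kN|_{E_i})$, the operator $\mathsmaller{\bigoplus}_{i \in \Lambda} A_i$ is characterized as the \emph{unique} element $A$ of $\textrm{Aff}(\kN)$ satisfying $(\Pi|_{E_j})_{\textrm{aff}}(A) = A_j$ for every $j \in \Lambda$. Consequently, to establish an equation of the form $B = \mathsmaller{\bigoplus}_{i \in \Lambda} A_i$ in $\textrm{Aff}(\kN)$, it is enough to verify that $(\Pi|_{E_j})_{\textrm{aff}}(B) = A_j$ for each $j \in \Lambda$. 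I would apply this criterion with $B$ taken to be the right-hand side of (\ref{eqn:directsum-product}), respectively of (\ref{eqn:directsum-adjoint}).

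Before carrying this out I would record the two inputs being combined. First, by Proposition \ref{prop:restrictions}-(i) each $\kN|_{E_i}$ is a finite von Neumann algebra, so $\textrm{Aff}(\kN|_{E_i})$ is a $*$-algebra under $\hat{+}, \hat{\cdot}, {}^*$ (Definition \ref{def:MvN}); in particular $R_i \hat{\cdot} S_i$ and $R_i^*$ again belong to $\textrm{Aff}(\kN|_{E_i})$, so the direct sums on the left-hand sides of (\ref{eqn:directsum-product}) and (\ref{eqn:directsum-adjoint}) are genuinely defined via Proposition \ref{prop:dir_sum_aff}. Second, $\Pi|_{E_j}$ is a unital normal $*$-homomorphism (Proposition \ref{prop:restrictions}-(i)), so by Definition \ref{def:Phi_aff} its extension $(\Pi|_{E_j})_{\textrm{aff}}$ preserves $\hat{+}, \hat{\cdot}$ and ${}^*$; and by Proposition \ref{prop:dir_sum_aff} we have $(\Pi|_{E_j})_{\textrm{aff}}\big(\mathsmaller{\bigoplus}_{i \in \Lambda} R_i\big) = R_j$ and $(\Pi|_{E_j})_{\textrm{aff}}\big(\mathsmaller{\bigoplus}_{i \in \Lambda} S_i\big) = S_j$ for every $j \in \Lambda$.

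With these in hand, the verification is immediate. For (\ref{eqn:directsum-product}), since $(\Pi|_{E_j})_{\textrm{aff}}$ respects $\hat{\cdot}$,
\[
(\Pi|_{E_j})_{\textrm{aff}}\Big(\big(\mathsmaller{\bigoplus}_{i \in \Lambda} R_i\big) \hat{\cdot}\, \big(\mathsmaller{\bigoplus}_{i \in \Lambda} S_i\big)\Big) = (\Pi|_{E_j})_{\textrm{aff}}\big(\mathsmaller{\bigoplus}_{i \in \Lambda} R_i\big) \hat{\cdot}\, (\Pi|_{E_j})_{\textrm{aff}}\big(\mathsmaller{\bigoplus}_{i \in \Lambda} S_i\big) = R_j \hat{\cdot} S_j
\]
for every $j \in \Lambda$, and since $\mathsmaller{\bigoplus}_{i \in \Lambda}(R_i \hat{\cdot} S_i)$ is by definition the unique operator of $\textrm{Aff}(\kN)$ with this property, (\ref{eqn:directsum-product}) follows. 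The argument for (\ref{eqn:directsum-adjoint}) is the same: $(\Pi|_{E_j})_{\textrm{aff}}\big((\mathsmaller{\bigoplus}_{i \in \Lambda} R_i)^*\big) = \big((\Pi|_{E_j})_{\textrm{aff}}(\mathsmaller{\bigoplus}_{i \in \Lambda} R_i)\big)^* = R_j^*$ for every $j$, whence $(\mathsmaller{\bigoplus}_{i \in \Lambda} R_i)^* = \mathsmaller{\bigoplus}_{i \in \Lambda} R_i^*$ by the uniqueness clause of Proposition \ref{prop:dir_sum_aff}. I do not anticipate any genuine obstacle: all the substantive work has already been absorbed into the functoriality of $(\cdot)_{\textrm{aff}}$ (Definition \ref{def:Phi_aff}) and the uniqueness in Proposition \ref{prop:dir_sum_aff}; the only point deserving a line of care is the well-definedness of the left-hand sides noted above.
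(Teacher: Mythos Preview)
Your proposal is correct and follows essentially the same approach as the paper: apply $(\Pi|_{E_j})_{\textrm{aff}}$ to the right-hand side of each identity, use that it is a $*$-homomorphism together with the defining property $(\Pi|_{E_j})_{\textrm{aff}}\big(\bigoplus_i R_i\big)=R_j$, and conclude via the uniqueness clause of Proposition~\ref{prop:dir_sum_aff}. Your write-up is in fact slightly more careful than the paper's, explicitly noting why the left-hand sides are well-defined.
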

\begin{proof}
Since $(\Pi|_{E_i})_\textrm{aff}$ is a $*$-homomorphism (by Proposition \ref{prop:restrictions}-(ii)), for $i \in \Lambda$, we have,
\begin{align*}
(\Pi|_{E_i})_\textrm{aff}\Big(\big( \mathsmaller{\bigoplus}_{i \in \Lambda} R_i \big) ~\hat{\cdot}~\big( \mathsmaller{\bigoplus}_{i \in \Lambda} S_i \big)\Big) &= \Big((\Pi|_{E_i})_\textrm{aff}\big( \mathsmaller{\bigoplus}_{i \in \Lambda} R_i \big)\Big)~\hat{\cdot}~ \Big((\Pi|_{E_i})_\textrm{aff}\big( \mathsmaller{\bigoplus}_{i \in \Lambda} S_i \big)\Big)\\
&= R_i ~\hat{\cdot}~S_i.
\end{align*}
Similarly, for $i \in \Lambda$,
$$ (\Pi|_{E_i})_\textrm{aff}\Big(\big( \mathsmaller{\bigoplus}_{i \in \Lambda} R_i\big)^* \Big) =  (\Pi|_{E_i})_\textrm{aff}\big( \mathsmaller{\bigoplus}_{i \in \Lambda} R_i \big)^* = R_i^* .$$
The result immediately follows from the uniqueness clause in Proposition \ref{prop:dir_sum_aff}.
\end{proof}

\begin{lem}
\label{lem:dir-um:normal,scalar-type,m-quasinil}
\textsl{
Let $ \kN$ be a finite von Neumann algebra acting on the Hilbert space $\kH$, and $\{E_i\}_{i \in \Lambda}$ be a family of mutually orthogonal central projections in $ \kN$ such that $\sum_{i \in \Lambda} E_i = I_\kN$. By Proposition \ref{prop:restrictions}-(i), note that $\kN|_{E_i}$ is a finite von Neumann algebra.
\begin{itemize}
    \item[(i)] For $i \in \Lambda$, let $S_i$ be an invertible operator in $\textrm{Aff}( \kN|_{E_i})$. Then $\mathsmaller{\bigoplus}_{i \in \Lambda} S_i$ is an invertible operator in $\textrm{Aff}( \kN)$.
    \item[(ii)] For $i \in \Lambda$, let $T_i$ be a normal operator in $\textrm{Aff}( \kN|_{E_i})$. Then $\mathsmaller{\bigoplus}_{i \in \Lambda} T_i$ is a normal operator in $\textrm{Aff}( \kN)$. 
    \item[(iii)] For $i \in \Lambda$, let $H_i$ be a self-adjoint operator (positive operator, respectively) in $\textrm{Aff}( \kN|_{E_i})$. Then $\mathsmaller{\bigoplus}_{i \in \Lambda} H_i$ is a self-adjoint operator (positive operator, respectively) in $\textrm{Aff}( \kN)$. 
    \item[(iv)] For $i \in \Lambda$, let $D_i$ be a $\mathfrak{u}$-scalar-type operator in $\textrm{Aff}( \kN|_{E_i})$. Then $\mathsmaller{\bigoplus}_{i \in \Lambda} D_i$ is a $\mathfrak{u}$-scalar-type operator of $\textrm{Aff}( \kN)$. 
    \item[(v)] For $i \in \Lambda$, let $A_i$ be an operator in $\textrm{Aff}( \kN|_{E_i})$ whose normalized power sequence converges in the $\mathfrak{m}$-topology to a positive operator $H_i$ in $\textrm{Aff}(\kN|_{E_i})$. Then the normalized power sequence of the operator $\mathsmaller{\bigoplus}_{i \in \Lambda} A_i$ in $\textrm{Aff}(\kN)$, converges in the $\mathfrak{m}$-topology to the positive operator $\mathsmaller{\bigoplus}_{i \in \Lambda} H_i$ in $\textrm{Aff}( \kN)$. 
    \item[(vi)] For $i \in \Lambda$, let $N_i$ be an $\mathfrak{m}$-quasinilpotent operator in $\textrm{Aff}( \kN|_{E_i})$. Then $\mathsmaller{\bigoplus}_{i \in \Lambda} N_i$ is an $\mathfrak{m}$-quasinilpotent operator in $\textrm{Aff}( \kN)$. 
\end{itemize}
}
\end{lem}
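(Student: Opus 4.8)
The plan is to dispatch parts (i)--(iv) by a single mechanism and then derive (v), with (vi) an immediate special case of (v). The mechanism is this: every operator in $\textrm{Aff}(\kN)$ is determined by the family of its images under the $*$-homomorphisms $(\Pi|_{E_i})_{\textrm{aff}}$ (the uniqueness clause of Proposition~\ref{prop:dir_sum_aff}), and direct sums are compatible with $\hat{+}, \hat{\cdot}, *$ by Lemma~\ref{lem:prod-m-summable}; so each algebraic identity in $\textrm{Aff}(\kN)$ that we need is obtained by computing ``coordinatewise''. For (i), set $S := \mathsmaller{\bigoplus}_{i} S_i$ and $S' := \mathsmaller{\bigoplus}_{i} S_i^{-1}$; then (\ref{eqn:directsum-product}) gives $S~\hat{\cdot}~S' = \mathsmaller{\bigoplus}_{i}(S_i~\hat{\cdot}~S_i^{-1}) = \mathsmaller{\bigoplus}_{i} I_{\kN|_{E_i}} = I_\kN$ (the last equality because $(\Pi|_{E_i})_{\textrm{aff}}$ is unital), and symmetrically $S'~\hat{\cdot}~S = I_\kN$, so $S$ is invertible with $S^{-1} = S'$. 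For (ii), with $T := \mathsmaller{\bigoplus}_{i} T_i$, equations (\ref{eqn:directsum-product}) and (\ref{eqn:directsum-adjoint}) give $T~\hat{\cdot}~T^* = \mathsmaller{\bigoplus}_{i}(T_i~\hat{\cdot}~T_i^*) = \mathsmaller{\bigoplus}_{i}(T_i^*~\hat{\cdot}~T_i) = T^*~\hat{\cdot}~T$, i.e. $T$ is normal. For (iii), self-adjointness of $\mathsmaller{\bigoplus}_{i} H_i$ is immediate from (\ref{eqn:directsum-adjoint}); for positivity, choose $S_i \in \textrm{Aff}(\kN|_{E_i})$ with $S_i = S_i^*$ and $S_i~\hat{\cdot}~S_i = H_i$ (a positive square root of $H_i$), note $S := \mathsmaller{\bigoplus}_i S_i$ is self-adjoint by the case just proved, and conclude $\mathsmaller{\bigoplus}_i H_i = S~\hat{\cdot}~S = S^*~\hat{\cdot}~S \ge 0$ using (\ref{eqn:directsum-product}). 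For (iv), write $D_i = S_i^{-1}~\hat{\cdot}~T_i~\hat{\cdot}~S_i$ with $S_i$ invertible and $T_i$ normal, put $S := \mathsmaller{\bigoplus}_i S_i$ (invertible by (i), with $S^{-1} = \mathsmaller{\bigoplus}_i S_i^{-1}$) and $T := \mathsmaller{\bigoplus}_i T_i$ (normal by (ii)); associativity of $\hat{\cdot}$ and two uses of (\ref{eqn:directsum-product}) give $S^{-1}~\hat{\cdot}~T~\hat{\cdot}~S = \mathsmaller{\bigoplus}_i D_i$, so $\mathsmaller{\bigoplus}_i D_i$ is $\mathfrak{u}$-scalar-type.

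For (v), set $A := \mathsmaller{\bigoplus}_i A_i$. Since $A = \msum_j A_j E_j$ with the $E_j$ central, $A$ commutes with every $E_j$, hence so do $A^k$, $|A^k|$ and $|A^k|^{\frac{1}{k}}$; combining this with Lemma~\ref{lem:norm_pow_func}-(i) applied to $(\Pi|_{E_i})_{\textrm{aff}}$ (which sends $A$ to $A_i$) yields $|A^k|^{\frac{1}{k}} E_i = |A_i^k|^{\frac{1}{k}} E_i = \sigma_i\big(|A_i^k|^{\frac{1}{k}}\big)$ for all $i,k$, where $\sigma_i : \textrm{Aff}(\kN|_{E_i}) \to \textrm{Aff}(\kN)$ is the section $A' \mapsto A'E_i$ of Proposition~\ref{prop:restrictions}-(iii). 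The next step is to verify that $\sigma_i$ is $\mathfrak{m}$-continuous: this follows from the definition of the $\mathfrak{m}$-topology using that a normal tracial state $\tau$ on $\kN$ restricts on the corner $E_i\kN E_i$ to $\tau(E_i)$ times a normal tracial state, so that a projection witnessing $A' \in O(\tau_i,\ep,\delta/\tau(E_i))$ in $\kN|_{E_i}$ yields, together with $I_\kN - E_i$, a projection witnessing $\sigma_i(A') \in O(\tau,\ep,\delta)$ in $\kN$ (and when $\tau(E_i)=0$ the projection $I_\kN-E_i$ already does the job for every $A'$). Consequently, for each $i$ the sequence $\big(|A^k|^{\frac{1}{k}}E_i\big)_{k\in\N}$ converges in the $\mathfrak{m}$-topology, so Corollary~\ref{cor:m_top_conv} shows $\{|A^k|^{\frac{1}{k}}\}_{k\in\N}$ is $\mathfrak{m}$-convergent to some $H \in \textrm{Aff}(\kN)$; applying the $\mathfrak{m}$-continuous map $(\Pi|_{E_i})_{\textrm{aff}}$ and Lemma~\ref{lem:norm_pow_func}-(ii) gives $(\Pi|_{E_i})_{\textrm{aff}}(H) = \mlim_k |A_i^k|^{\frac{1}{k}} = H_i$ for every $i$, whence $H = \mathsmaller{\bigoplus}_i H_i$ by the uniqueness clause of Proposition~\ref{prop:dir_sum_aff}, and this operator is positive by part (iii).

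Finally, (vi) is the special case of (v) with $A_i = N_i$ and $H_i = 0_{\kN|_{E_i}}$: one gets $\mlim_k \big|\big(\mathsmaller{\bigoplus}_i N_i\big)^k\big|^{\frac{1}{k}} = \mathsmaller{\bigoplus}_i 0_{\kN|_{E_i}} = 0_\kN$, so $\mathsmaller{\bigoplus}_i N_i$ is $\mathfrak{m}$-quasinilpotent. The parts (i)--(iv) and (vi) require nothing beyond Lemma~\ref{lem:prod-m-summable} and the defining property of $\mathsmaller{\bigoplus}$, and are purely bookkeeping. The one genuinely delicate point is in (v): because $\Lambda$ may be infinite, one cannot control $\big\|\big(|A^k|^{\frac{1}{k}} - \mathsmaller{\bigoplus}_i H_i\big)E\big\|$ uniformly across all summands by a single projection $E$, so the convergence of $\{|A^k|^{\frac{1}{k}}\}_k$ must be extracted from the $\mathfrak{m}$-continuity of the section maps $\sigma_i$ together with Corollary~\ref{cor:m_top_conv} (which rests on Lemma~\ref{lem:m_top_conv}) rather than by direct estimation; establishing the $\mathfrak{m}$-continuity of $\sigma_i$ is where the only real work lies.
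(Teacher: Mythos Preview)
Your proof is correct and follows the paper's approach essentially verbatim for parts (i)--(iv) and (vi); the only cosmetic difference is that for positivity in (iii) the paper writes $H_i = A_i^*~\hat{\cdot}~A_i$ rather than using a self-adjoint square root, but either works. For (v), the paper first records the identity $\big|(\mathsmaller{\bigoplus}_i A_i)^m\big|^{1/m} = \mathsmaller{\bigoplus}_i |A_i^m|^{1/m}$ (via Lemma~\ref{lem:norm_pow_func}-(i) and the uniqueness clause of Proposition~\ref{prop:dir_sum_aff}) and then passes to the limit with a terse reference to Lemma~\ref{lem:norm_pow_func}-(ii) and Proposition~\ref{prop:dir_sum_aff}; your version is more explicit about \emph{why} coordinatewise $\mathfrak{m}$-convergence lifts to $\textrm{Aff}(\kN)$, namely by verifying that the section maps $\sigma_i$ are $\mathfrak{m}$-continuous and then invoking Corollary~\ref{cor:m_top_conv}. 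This is the same argument with a step the paper leaves implicit spelled out, not a different route.
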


\begin{proof}
{(i)} Let $S_i^{-1} \in \textrm{Aff}(\kN|_{E_i})$ be the inverse of $S_i$. Using Lemma \ref{lem:prod-m-summable}, we have

$$\big( \mathsmaller{\bigoplus}_{i \in \Lambda} S_i \big) ~\hat{\cdot}~\big( \mathsmaller{\bigoplus}_{i \in \Lambda}S_i^{-1} \big) = \mathsmaller{\bigoplus}_{i \in \Lambda} (S_i ~\hat{\cdot}~S_i^{-1}) = \mathsmaller{\bigoplus}_{i \in \Lambda} E_i = I_{\kN}.$$
Similarly, $\big( \mathsmaller{\bigoplus}_{i \in \Lambda} S_i^{-1} \big) ~\hat{\cdot}~\big( \mathsmaller{\bigoplus}_{i \in \Lambda} S_i \big) = I_\kN,$ which proves the result in this part.

\medskip

\noindent {(ii)} Since $T_i$ is a normal operator in $\textrm{Aff}(\kN|_{E_i})$, we have $T_i ~\hat{\cdot}~T_i^* = T_i^* ~\hat{\cdot}~T_i$. Using equations (\ref{eqn:directsum-product}) and (\ref{eqn:directsum-adjoint}), we have,
\begin{align*}\label{eqn:directsum-normal}
 \big( \mathsmaller{\bigoplus}_{i \in \Lambda} T_i \big) ~\hat{\cdot}~ \big( \mathsmaller{\bigoplus}_{i \in \Lambda} T_i \big)^* &= 
    \big( \mathsmaller{\bigoplus}_{i \in \Lambda} T_i \big) ~\hat{\cdot}~ \big( \mathsmaller{\bigoplus}_{i \in \Lambda} T_i^* \big)
    = \mathsmaller{\bigoplus}_{i \in \Lambda} (T_i ~\hat{\cdot}~T_i^*)\\
    &= \mathsmaller{\bigoplus}_{i \in \Lambda} (T_i^* ~\hat{\cdot}~T_i)
    = \big( \mathsmaller{\bigoplus}_{i \in \Lambda} T_i^* \big) ~\hat{\cdot}~ \big( \mathsmaller{\bigoplus}_{i \in \Lambda} T_i \big)\\
    &= \big( \mathsmaller{\bigoplus}_{i \in \Lambda} T_i \big)^* ~\hat{\cdot}~ \big( \mathsmaller{\bigoplus}_{i \in \Lambda} T_i \big).
\end{align*}

\medskip

\noindent {(iii)} If $H_i$ is self adjoint in $\textrm{Aff}(\kN|_{E_i})$, we have $H_i^* = H_i$ for all $i \in \Lambda$. Using equation (\ref{eqn:directsum-adjoint}), we get
$$\big( \mathsmaller{\bigoplus}_{i \in \Lambda} H_i \big)^* = \big( \mathsmaller{\bigoplus}_{i \in \Lambda} H_i^* \big) = \big( \mathsmaller{\bigoplus}_{i \in \Lambda} H_i \big).$$
For $i \in \Lambda$, let $H_i$ be a positive operator in $\textrm{Aff}(\kN|_{E_i})$, then $H_i = A_i ^*A_i$ for some operator $A_i$ in $\textrm{Aff}(\kN|_{E_i})$ (see \cite[Proposition 6.14]{nayak_MvN_alg}). Thus from  equation (\ref{eqn:directsum-product}), 
$$ \mathsmaller{\bigoplus}_{i \in \Lambda} H_i = \mathsmaller{\bigoplus}_{i \in \Lambda} (A_i ^* ~\hat{\cdot}~ A_i) = \big( \mathsmaller{\bigoplus}_{i \in \Lambda} A_i \big)^* ~\hat{\cdot}~ \big( \mathsmaller{\bigoplus}_{i \in \Lambda} A_i \big) ,$$
is a positive operator in $\textrm{Aff}(\kN)$.
\medskip

\noindent {(iv)} Since $D_i$ is a $\mathfrak{u}$-scalar-type operator in $\textrm{Aff}(\kN|_{E_i})$, there is an invertible operator $S_i$, and a normal operator $T_i$, in $\textrm{Aff}(\kN|_{E_i})$, such that $D_i = S_i^{-1} ~\hat{\cdot}~ T_i ~\hat{\cdot}~ S_i.$ From part {(i)}, $\big( \mathsmaller{\bigoplus}_{i \in \Lambda} S_i \big)$ is an invertible operator in $\textrm{Aff}(\kN)$ with $\big( \mathsmaller{\bigoplus}_{i \in \Lambda} S_i \big)^{-1} = \big( \mathsmaller{\bigoplus}_{i \in \Lambda} S_i^{-1} \big)$. From part {(ii)}, $\big( \mathsmaller{\bigoplus}_{i \in \Lambda} T_i \big)$ is a normal operator in $\textrm{Aff}(\kN)$.
Using equation (\ref{eqn:directsum-product}) and part (i), we have
\begin{align*}
      \mathsmaller{\bigoplus}_{i \in \Lambda} D_i = \mathsmaller{\bigoplus}_{i \in \Lambda} (S_i~\hat{\cdot}~ T_i~\hat{\cdot}~ S_i^{-1})
    &= \big( \mathsmaller{\bigoplus}_{i \in \Lambda} S_i \big)~\hat{\cdot}~ \big(\mathsmaller{\bigoplus}_{i \in \Lambda} T_i \big) ~\hat{\cdot}~ \big( \mathsmaller{\bigoplus}_{i \in \Lambda} S_i^{-1} \big)\\
    &=\big( \mathsmaller{\bigoplus}_{i \in \Lambda} S_i \big)~\hat{\cdot}~ \big( \mathsmaller{\bigoplus}_{i \in \Lambda} T_i \big) ~\hat{\cdot}~ \big( \mathsmaller{\bigoplus}_{i \in \Lambda} S_i \big)^{-1}.
\end{align*}
Thus, $ \mathsmaller{\bigoplus}_{i \in \Lambda} D_i$ is a $\mathfrak{u}$-scalar-type operator in $\textrm{Aff}(\kN)$.

\medskip

\noindent {(v)} By repeated application of equation (\ref{eqn:directsum-product}), for every $m \in \N$, we have 
$(\mathsmaller{\bigoplus}_{i \in \Lambda}A_i)^m = \mathsmaller{\bigoplus}_{i \in \Lambda}A_i^m$. Using Lemma \ref{lem:norm_pow_func}-(i) in the context of the $\Pi|_{E_i}$'s and equations (\ref{eqn:directsum-product}) and (\ref{eqn:directsum-adjoint}), for every $m \in \N$ we see that
\begin{equation}
\label{eqn:dirsum_m-nps}
\mathsmaller{\bigoplus}_{i \in \Lambda} |A_i^m|^\frac{1}{m}
= \big| (\mathsmaller{\bigoplus}_{i \in \Lambda}A_i)^m\big|^\frac{1}{m}.
\end{equation}
For $i \in \Lambda$, since $\mlim_{m \in \N} |A_i ^m|^{\frac{1}{m}} = H_i$, by Lemma \ref{lem:norm_pow_func}-(ii)  in the context of the $\Pi|_{E_i}$'s, equation (\ref{eqn:dirsum_m-nps}), and the uniqueness clause in Proposition \ref{prop:dir_sum_aff}, we have, 
$$\mlim_{m \in \N} \big| (\mathsmaller{\bigoplus}_{i \in \Lambda}A_i)^m\big|^\frac{1}{m} = \mathsmaller{\bigoplus}_{i \in \Lambda} H_i.$$

\medskip

\noindent {(vi)} 
Since each $N_i \in \textrm{Aff}(\kN|_{E_i})$ is an $\m$-quasinilpotent operator, the normalized power sequence, $\{|N_i^m|^\frac{1}{m}\}_{m\in\N}$, of $N_i$, converges to $0_{\kN|_{E_i}}$ in the $\m$-topology for all $i \in \Lambda$. The result immediately follows from part {(v)}.
\end{proof}


\section{The Jordan-Chevalley-Dunford decomposition in type $I$ Murray-von Neumann algebras}
\label{sec:JCD_decomposition}

In this section, using the groundwork laid in \S \ref{sec:JCD_N_X} and \S \ref{sec:aff_op_MvN_algebras}, we establish the existence and uniqueness of Jordan-Chevalley-Dunford decomposition of operators in type $I$ Murray-von Neumann algebras. Furthermore, we discuss ramifications of these results for any meaningful version of the Jordan-Chevalley-Dunford decomposition in the context of type $II_1$ Murray-von Neumann algebras.

\begin{prop}
\label{prop:main2}
\textsl{
For $n \in \N$, let $\kM_n$ be a finite von Neumann algebra of type $I_n$, acting on the Hilbert space $\kH$, and let $A \in \textrm{Aff}(\kM_n)$. Then we have the following:
\begin{itemize}
    \item[(i)] There is a unique pair of commuting operators $D, N$ in $\textrm{Aff}(\kM_n)$ such that $D$ is $\mathfrak{u}$-scalar-type, $N$ is nilpotent, and $A = D ~\hat{+}~ N$.
    \item[(ii)] The normalized power sequence of $A$ converges in the $\mathfrak{m}$-topology to a positive operator in $\textrm{Aff}(\kM_n)$. 
    \item[(iii)] The operator $A$ is $\mathfrak{m}$-quasinilpotent if and only if it is nilpotent.
\end{itemize} 
}
\end{prop}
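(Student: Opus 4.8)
The plan is to derive all three parts from the Jordan--Chevalley decomposition over $\sN(X)$ (Theorem~\ref{thm:main1}), supplemented by the classical finite-dimensional behaviour of the normalized power sequence applied pointwise. I work throughout with the identifications recalled in the introduction, viewing $\textrm{Aff}(\kM_n)$ as $M_n\big(\sN(X)\big)$ (equivalently $\sN\big(X;M_n(\C)\big)$, cf.\ Remark~\ref{rmrk:norm_iso}), where $X$ is the (hyper-)Stonean space with $\kM_n \cong M_n(\kA)$, $\kA \cong C(X)$; under this identification $\hat{+},\hat{\cdot}$ are the ordinary matrix operations. For (i), existence is immediate from Theorem~\ref{thm:main1}: it provides $A = D + N$ with $D$ diagonalizable over $\sN(X)$, $N$ nilpotent, $DN = ND$, and writing $D = S\Delta S^{-1}$ with $\Delta$ a diagonal matrix over $\sN(X)$, the matrix $\Delta$ is a normal operator in $\textrm{Aff}(\kM_n)$ (its entries lie in the abelian $\sN(X) = \textrm{Aff}(\kA)$), so $D$ is $\mathfrak{u}$-scalar-type. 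For uniqueness, I would show that any $\mathfrak{u}$-scalar-type operator in $\textrm{Aff}(\kM_n)$ is diagonalizable over $\sN(X)$; a second decomposition $A = D'\hat{+}N'$ of the stated form is then another Jordan--Chevalley decomposition over $\sN(X)$, forcing $D'=D$, $N'=N$ by the uniqueness part of Theorem~\ref{thm:main1}. To prove the claim, write $D' = S^{-1}\hat{\cdot}T\hat{\cdot}S$ with $T$ normal; by Theorem~\ref{thm:cts_upper-tri} there is a unitary $V\in M_n\big(C(X)\big)$ with $V^*TV$ upper-triangular, and since $V^*TV$ is still normal and a normal upper-triangular complex matrix is diagonal, $V^*TV$ is diagonal over $\sN(X)$, so $D' = (S^{-1}V)(V^*TV)(S^{-1}V)^{-1}$ is diagonalizable over $\sN(X)$.

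For (ii), I would first reduce to the bounded case. With $O_k := \{x\in\dom(A) : \|A(x)\| < k\}$ and $X_k := \overline{O_k}\setminus\overline{O_{k-1}}$ as in the proof of Theorem~\ref{thm:cts_upper-tri}, the sets $X_k$ are mutually disjoint and clopen with $\bigcup_k X_k = \dom(A)$ dense in $X$, so the central projections $E_k := \mathbf{1}_{X_k}I_n$ of $\kM_n$ are mutually orthogonal with $\sum_k E_k = I_{\kM_n}$, and $A|_{X_k}$ is bounded, lying in $\kM_n|_{E_k} = M_n\big(C(X_k)\big)$. By Corollary~\ref{cor:m_top_conv}, Proposition~\ref{prop:dir_sum_aff}, and Lemma~\ref{lem:dir-um:normal,scalar-type,m-quasinil}-(iii),(v), it is enough to show that for each $k$ the normalized power sequence of $B := A|_{X_k}$ converges in the $\mathfrak{m}$-topology of $M_n\big(C(X_k)\big)$ to a positive operator. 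Pointwise, each $B(x)\in M_n(\C)$ is a spectral operator on $\C^n$, so by the case $\kH=\C^n$ of Theorem~\ref{thm:convergence_NPS} the limit $H(x):=\lim_m|B(x)^m|^{1/m}$ exists, is positive, and satisfies $\|H(x)\| = \lim_m\|B(x)^m\|^{1/m} = \rho(B(x))\le\|B\|$. Using Lemma~\ref{lem:collection_of_disj_open_sets} to split $X_k$, off a nowhere-dense set, into open pieces on which the eigenvalue-coincidence structure of $B(\cdot)$ is locally constant, I would verify that $x\mapsto H(x)$ is continuous on a dense open subset of $X_k$, hence (being bounded) has a normal extension $H_k$ that is a positive operator in $\textrm{Aff}(\kM_n|_{E_k})$; and finally that $|B^m|^{1/m}\to H_k$ in the $\mathfrak{m}$-topology.

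For (iii), if $A$ is nilpotent then $A^m = 0$ in $\textrm{Aff}(\kM_n)$ for some $m$, so $|A^k|^{1/k}=0$ for all $k\ge m$ and $A$ is $\mathfrak{m}$-quasinilpotent. Conversely, suppose $|A^k|^{1/k}\to 0$ in the $\mathfrak{m}$-topology; by (ii) this sequence also converges to the positive operator $\bigoplus_k H_k$, so $\bigoplus_k H_k = 0$ by the Hausdorffness of the $\mathfrak{m}$-topology (Definition~\ref{def:m-top}), i.e.\ $H_k = 0$ for all $k$. Since $H_k(x) = \lim_m|A(x)^m|^{1/m}$ for $x\in X_k\cap\dom(A)$, this gives $\rho(A(x)) = \|H_k(x)\| = 0$, hence $\mathrm{sp}(A(x)) = \{0\}$, for all $x$ in the dense subset $\bigcup_k X_k\cap\dom(A)$ of $\dom(A)$. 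Writing $A = D+N$ as in (i), Remark~\ref{rmrk:JC_gen} gives $\mathrm{sp}(D(x)) = \mathrm{sp}(A(x)) = \{0\}$ on that set, and a diagonalizable complex matrix with spectrum $\{0\}$ is $0$; thus $D$ vanishes on a dense subset of $\dom(D)$ and $D = 0$ in $M_n\big(\sN(X)\big)$, so $A = N$ is nilpotent.

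I expect the main obstacle to be the last step of (ii): promoting the pointwise convergence $|B(x)^m|^{1/m}\to H(x)$ to convergence in the $\mathfrak{m}$-topology and confirming that the limit is an affiliated operator. The normalized-power-sequence limit is continuous only away from the closed, nowhere-dense locus where eigenvalues of $A(\cdot)$ collide --- a set invisible to normal functions and null for every normal tracial state --- and turning this into genuine $\mathfrak{m}$-convergence should rely on Lemma~\ref{lem:collection_of_disj_open_sets} to glue the locally-constant eigenvalue data, together with an Egorov-type argument exploiting that the convergence is pointwise on the (full-measure) domain and uniformly bounded on each clopen piece $X_k$.
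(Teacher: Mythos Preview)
Your treatment of (i) is correct, and in fact more careful than the paper's, which simply calls part (i) a rephrasing of Theorem~\ref{thm:main1}; your observation that one must check that $\mathfrak{u}$-scalar-type in $\textrm{Aff}(\kM_n)$ coincides with diagonalizable over $\sN(X)$, via Theorem~\ref{thm:cts_upper-tri} and the fact that a normal upper-triangular matrix is diagonal, fills a small gap. Part (iii) is also essentially the same as the paper's argument.

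The gap is in part (ii), precisely where you flag it. You apply Theorem~\ref{thm:convergence_NPS} \emph{pointwise} to each $B(x)\in M_n(\C)$, obtaining only pointwise convergence $|B(x)^m|^{1/m}\to H(x)$ on $X_k$, and then appeal to an unspecified Egorov-type argument to upgrade this to $\mathfrak{m}$-convergence. That step is not routine: even granting continuity of $H$ off a nowhere-dense set, pointwise convergence of a uniformly bounded sequence in $C(X_k;M_n(\C))$ does not in general imply $\mathfrak{m}$-convergence, and Lemma~\ref{lem:collection_of_disj_open_sets} only stratifies by eigenvalue-coincidence type, which does not by itself control the \emph{rate} of convergence of the normalized power sequence uniformly on each stratum.

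The paper avoids this difficulty entirely by a sharper choice of clopen exhaustion. Instead of bounding only $A$, it takes the Jordan--Chevalley data $A=D+N$ with $D=S^{-1}TS$, $T$ diagonal, and sets
\[
O_k:=\big\{x:\max\{\|A(x)\|,\|D(x)\|,\|S(x)\|,\|S(x)^{-1}\|\}<k\big\},
\]
so that with $E_k$ the central projection for $\overline{O_k}$ one has $E_k\uparrow I_{\kM_n}$. Now $SE_k+(I-E_k)$ is boundedly invertible and conjugates $DE_k$ to the bounded normal operator $TE_k$; hence $DE_k$ is a scalar-type operator in $\kB(\kH)$ (Definition~\ref{def:scalar-type-B(H)}), $AE_k=DE_k+NE_k$ is a spectral operator in $\kB(\kH)$, and Theorem~\ref{thm:convergence_NPS} applied to the \emph{infinite-dimensional} operator $AE_k$ gives \emph{norm} convergence of $\{|(AE_k)^m|^{1/m}\}_m$. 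Norm convergence implies $\mathfrak{m}$-convergence, and since $|(AE_k)^m|^{1/m}=|A^m|^{1/m}E_k$, Lemma~\ref{lem:m_top_conv} yields the $\mathfrak{m}$-convergence of $\{|A^m|^{1/m}\}_m$. The missing idea in your argument is thus to bound the similarity $S,S^{-1}$ along with $A$, so that Theorem~\ref{thm:convergence_NPS} can be invoked once on each clopen piece as a Hilbert-space statement, rather than pointwise.
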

\begin{proof}
Using Remark \ref{rem:type_I_n}, the discussion in \cite[\S3-\S 4]{algebras_of_unbounded_functions_and_operators}, and  \cite[Theorem 4.15]{nayak_MvN_alg}, we note that there is a Stonean space $X$ such that $\kM_n$ is $*$-isomorphic to $M_n\big(C(X)\big)$, and  $\textrm{Aff}(\kM_n)$ is $*$-isomorphic to $M_n\big(\sN(X)\big)$. Throughout this proof, we move back and forth between the operator-theoretic and topological viewpoints depending on which is more apt for the situation.

\medskip

\noindent (i) The assertion is simply a rephrasing of Theorem \ref{thm:main1} in the context of $\textrm{Aff}(\kM_n) \cong M_n\big(\sN(X)\big)$.

\medskip

\noindent (ii)  Let $A = D + N$ be the Jordan-Chevalley decomposition of the matrix $A$ in $M_n\big(\sN(X)\big)$ as given by Theorem \ref{thm:main1}. Let $S \in M_n\big(\sN(X)\big)$ be an invertible matrix such that $SDS^{-1} = T$ is a diagonal matrix in $M_n\big(\sN(X)\big)$. 

Let $\sO := \dom(A) \cap \dom(D) \cap \dom(S)$; note that $\sO$ is an open dense subset of $X$. We define an increasing sequence of open subsets of $X$ as follows, 
$$O_k := \big\{ x \in X : \max \{\|A(x)\|, \|D(x)\|, \|S(x) \|, \|S(x)^{-1}\|\} < k \big\} ~\;~;~\;~ k \in \N.$$
Since the clopen set $\overline{O_k}$ is contained in $O_{k+1}$ and contains $O_k$, clearly $\sO = \bigcup_{k \in \N} O_k = \bigcup_{k \in \N} \overline{O_k}$; moreover, $\|T(x)\| = \| S(x) D(x) S(x)^{-1} \| < k^3$ for all $x \in O_k$. The indicator function for the clopen set $\overline{O_k}$, $$
E_k(x) =
\begin{cases}
  I_n, & \text{if } x \in \overline{O_k} \\
  {\bf 0}_n,         & \text{if } x \in X \backslash \overline{O_k}
\end{cases}
$$
corresponds to a central projection in $\kM_n$, which we also denote by $E_k$. As noted above, $\{ \overline{O_k} \}_{k \in \N}$ is an increasing sequence of clopen sets and $\sO =  \bigcup_{k \in \N} \overline{O_k}$ is a dense open subset of $X$, whence $E_k \uparrow I_{\kM_n}$ as $k \to \infty$.

Note that $AE_k, SE_k, DE_k, S^{-1}E_k$ are bounded operators in $\kM_n$ as their norm is less than or equal to $k$, and $TE_k$ is a bounded normal operator in $\kM_n$ as its norm is less than or equal to $k^3$. Thus $SE_k + (I_{\kM_n}-E_k)$ has bounded inverse $S^{-1}E_k + (I_{\kM_n}-E_k)$, and $$TE_k =  \left( SE_k +(I_{\kM_n}-E_k) \right) (DE_k) \left(S^{-1}E_k + (I_{\kM_n}-E_k) \right).$$ Thus, $DE_k$ is a scalar-type operator in $\kB(\kH)$ (see Definition \ref{def:scalar-type-B(H)}), whence $AE_k$ is a spectral operator in $\kB(\kH)$ (see Definition \ref{def:spectral-B(H)}) with the Dunford decomposition $AE_k = DE_k + NE_k$. By Theorem \ref{thm:convergence_NPS}, the normalized power sequence of $AE_k$ converges in norm to a positive operator in $\kM_n$, and hence also converges in the $\mathfrak{m}$-topology to the same operator, as the $\mathfrak{m}$-topology is coarser than the norm topology. 

For every $k, m \in \N$, clearly $|(AE_k)^m|^{\frac{1}{m}} = |A^m|^{\frac{1}{m}}E_k$ as $E_k$ is a central projection in $\kM_n$. Using Lemma \ref{lem:m_top_conv}, we conclude that the normalized power sequence of $A$ converges in the $\mathfrak{m}$-topology. Since the normalized power sequence of $A$ comprises of positive operators, by \cite[Proposition 4.10-(ii)]{nayak_MvN_alg}, the $\mathfrak{m}$-limit must be a positive operator in $\textrm{Aff}(\kM_n)$. 

\medskip

\noindent (iii) If $A$ is nilpotent, then the normalized power sequence of $A$ is eventually $0_{\kM_n}$, and thus clearly $\mathfrak{m}$-quasinilpotent. 
Conversely, assume that $A$ is $\m$-quasinilpotent, that is, $\mlim_{k \to \infty} |A^k|^{\frac{1}{k}} = 0_{\kM_n}$. For $\ell \in \N$, we have $\mlim_{k \to \infty} |A^k|^{\frac{1}{k}} E_{\ell}  = 0_{\kM_n}$, and from part (ii), we know that norm-$\lim_{k \to \infty} |(AE_{\ell})^k|^{\frac{1}{k}}$ exists; since its $\mathfrak{m}$-limit is $0_{\kM_n}$, so must be its norm-limit. Thus for every $\ell \in \N$ and $x \in \overline{O_{\ell}}$, using the spectral radius formula, $\mathrm{sp}(A(x)) = \{ 0 \}$, that is, $A(x)$ is a nilpotent matrix in $M_n(\C)$. We conclude that $A^n$ is ${\bf 0}_n$ on $\sO$ which is an open dense subset of $X$. Thus, $A \in M_n\big(\sN(X)\big)$ is nilpotent. 
\end{proof}

\begin{rem}
\label{rem:directsum-affnilpotents}
For $k \in \N$, let $J_k(0)$ be the $k \times k$ Jordan matrix in $M_k(\C)$ with $0$'s on the diagonal. As noted in Lemma \ref{lem:dir-um:normal,scalar-type,m-quasinil}-(v), $\mathfrak{m}$-quasinilpotence is preserved under arbitrary direct sums, whereas nilpotence is not, as demonstrated by the direct sum, $\mathsmaller{\bigoplus}_{k \in \N} J_k(0)$, which is not nilpotent in the type $I$ finite von Neumann algebra $\mathsmaller{\bigoplus}_{k \in \N} M_k(\C)$. The theorem below shows that substituting nilpotence with the weaker condition of $\mathfrak{m}$-quasinilpotence allows for the desired generalization of Proposition \ref{prop:main2} to the setting of type $I$ finite von Neumann algebras.
\end{rem}

\begin{thm}
\label{thm:main2}
\textsl{
Let $\kM$ be a type $I$ finite von Neumann algebra acting on the Hilbert space $\kH$, and let $A \in \textrm{Aff}(\kM)$. Then we have the following.
\begin{enumerate}
    \item[(i)] There is a unique pair of commuting operators $D, N$ in $\textrm{Aff}(\kM)$ such that $D$ is $\mathfrak{u}$-scalar-type, $N$ is $\mathfrak{m}$-quasinilpotent, and $A = D ~\hat{+}~ N$.
    \item[(ii)] The normalized power sequence of $A$ converges in the $\mathfrak{m}$-topology to a positive operator in $\textrm{Aff}(\kM)$. 
\end{enumerate} 
}
\end{thm}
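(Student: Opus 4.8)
The plan is to reduce to the type $I_n$ case settled in Proposition \ref{prop:main2} by decomposing $\kM$ into its homogeneous central summands and then reassembling via the direct-sum formalism of \S\ref{sec:aff_op_MvN_algebras}.

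First I would invoke the structure theory of type $I$ von Neumann algebras (see \cite[Theorem 6.5.2]{KR-II}) to obtain a family $\{E_n\}_{n \in \N}$ of mutually orthogonal central projections in $\kM$ with $\sum_{n \in \N} E_n = I_\kM$ such that, for each $n$, the reduced algebra $\kM|_{E_n}$ (in the notation of Proposition \ref{prop:restrictions}) is a finite von Neumann algebra of type $I_n$; finiteness of $\kM$ is what guarantees that only the summands with $n \in \N$ appear, with the convention that $E_n = 0_\kM$ is allowed. Put $A_n := (\Pi|_{E_n})_{\textrm{aff}}(A) \in \textrm{Aff}(\kM|_{E_n})$ for each $n$; since $A$ itself satisfies $(\Pi|_{E_n})_{\textrm{aff}}(A) = A_n$ for all $n$, the uniqueness clause of Proposition \ref{prop:dir_sum_aff} gives $A = \bigoplus_{n \in \N} A_n$.

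For existence, I would apply Proposition \ref{prop:main2} to each $A_n$, obtaining a commuting pair $D_n, N_n \in \textrm{Aff}(\kM|_{E_n})$ with $D_n$ $\mathfrak{u}$-scalar-type, $N_n$ nilpotent, $A_n = D_n \hat{+} N_n$, and a positive operator $H_n \in \textrm{Aff}(\kM|_{E_n})$ that is the $\mathfrak{m}$-limit of the normalized power sequence of $A_n$. Set $D := \bigoplus_{n \in \N} D_n$ and $N := \bigoplus_{n \in \N} N_n$ in $\textrm{Aff}(\kM)$ (Proposition \ref{prop:dir_sum_aff}). By Lemma \ref{lem:dir-um:normal,scalar-type,m-quasinil}-(iv) the operator $D$ is $\mathfrak{u}$-scalar-type, and since each nilpotent $N_n$ is $\mathfrak{m}$-quasinilpotent, Lemma \ref{lem:dir-um:normal,scalar-type,m-quasinil}-(vi) shows $N$ is $\mathfrak{m}$-quasinilpotent. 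Applying the $*$-homomorphism $(\Pi|_{E_n})_{\textrm{aff}}$ (which preserves $\hat{+}$ and $\hat{\cdot}$) to $D \hat{+} N$ and to $D \hat{\cdot} N$ and $N \hat{\cdot} D$, one sees that both $D \hat{+} N$ and $A$ have the same image $A_n = D_n \hat{+} N_n$ under every $(\Pi|_{E_n})_{\textrm{aff}}$, and likewise $D \hat{\cdot} N$ and $N \hat{\cdot} D$ have the common image $D_n \hat{\cdot} N_n = N_n \hat{\cdot} D_n$; the uniqueness clause of Proposition \ref{prop:dir_sum_aff} then yields $A = D \hat{+} N$ and $D \hat{\cdot} N = N \hat{\cdot} D$. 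This proves existence in (i), and (ii) follows at once by feeding the $H_n$ from Proposition \ref{prop:main2}-(ii) into parts (v) and (iii) of Lemma \ref{lem:dir-um:normal,scalar-type,m-quasinil}, which give that the normalized power sequence of $A = \bigoplus_n A_n$ converges in the $\mathfrak{m}$-topology to the positive operator $\bigoplus_n H_n \in \textrm{Aff}(\kM)$.

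For uniqueness in (i), suppose $A = D' \hat{+} N'$ with $D'$ $\mathfrak{u}$-scalar-type, $N'$ $\mathfrak{m}$-quasinilpotent, and $D' \hat{\cdot} N' = N' \hat{\cdot} D'$. Applying $(\Pi|_{E_n})_{\textrm{aff}}$, which by Proposition \ref{prop:restrictions}-(ii) is the restriction map, gives $A_n = D'|_{E_n(\kH)} \hat{+} N'|_{E_n(\kH)}$ with the two summands commuting; moreover $D'|_{E_n(\kH)}$ is $\mathfrak{u}$-scalar-type and $N'|_{E_n(\kH)}$ is $\mathfrak{m}$-quasinilpotent in $\textrm{Aff}(\kM|_{E_n})$ by Lemma \ref{lem:restrictions_normal,scalar-type,m-quasinil}-(iii),(iv). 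Here is the one genuinely load-bearing step: by Proposition \ref{prop:main2}-(iii), $\mathfrak{m}$-quasinilpotence coincides with nilpotence in a type $I_n$ Murray--von Neumann algebra, so $N'|_{E_n(\kH)}$ is actually nilpotent, and the uniqueness assertion of Proposition \ref{prop:main2}-(i) forces $D'|_{E_n(\kH)} = D_n$ and $N'|_{E_n(\kH)} = N_n$ for every $n$. Thus $D'$ and $D$ (resp.\ $N'$ and $N$) have the same image under every $(\Pi|_{E_n})_{\textrm{aff}}$, and the uniqueness clause of Proposition \ref{prop:dir_sum_aff} gives $D' = D$ and $N' = N$. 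I do not anticipate a serious obstacle beyond this bookkeeping; the subtlety that must not be overlooked is precisely Proposition \ref{prop:main2}-(iii), without which the argument would not close, since over direct sums $\mathfrak{m}$-quasinilpotence is strictly weaker than nilpotence (Remark \ref{rem:directsum-affnilpotents}).
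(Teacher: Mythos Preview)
Your proposal is correct and follows essentially the same approach as the paper: decompose $\kM$ into its type $I_n$ central summands, apply Proposition \ref{prop:main2} on each summand, and reassemble via Proposition \ref{prop:dir_sum_aff} together with Lemma \ref{lem:dir-um:normal,scalar-type,m-quasinil} for existence and Lemma \ref{lem:restrictions_normal,scalar-type,m-quasinil} plus Proposition \ref{prop:main2}-(iii) for uniqueness. You have also correctly identified Proposition \ref{prop:main2}-(iii) as the load-bearing step that converts $\mathfrak{m}$-quasinilpotence back to nilpotence at the $I_n$ level, which is precisely the hinge the paper relies on.
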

\begin{proof}
By the type decomposition of type $I$ von Neumann algebras (see \cite[Theorem 6.5.2]{KR-II}), there is a subset $\Lambda$ of $\N$ and a collection of mutually orthogonal non-trivial central projections $\{ E_k : k \in \Lambda \}$ such that $\sum_{k \in \Lambda} E_k = I_\kM$  and for every $k \in \Lambda$, the von Neumann algebra $\kM|_{E_k}$ acting on the Hilbert space $E_k(\kH)$ is of type $I_k$.

\vskip 0.05in

\noindent (i) By Proposition \ref{prop:main2}, there are operators $D_k, N_k$ in $\textrm{Aff}(\kM|_{E_k})$ such that $$A|_{E_k(\kH)} = D_k ~\hat{+}~ N_k ,~\;~ D_k ~\hat{\cdot}~ N_k = N_k ~\hat{\cdot}~D_k , ~\text{ and } N_k ^k = 0_{\kM|_{E_k}}.$$ 
From Lemma \ref{lem:dir-um:normal,scalar-type,m-quasinil}-(iv), $D := \mathsmaller{\bigoplus}_{k \in \Lambda} D_k$ is $\mathfrak{u}$-scalar-type, and by Lemma \ref{lem:dir-um:normal,scalar-type,m-quasinil}-(vi), $N := \mathsmaller{\bigoplus}_{k \in \Lambda} N_k$ is $\mathfrak{m}$-quasinilpotent. From Proposition \ref{prop:dir_sum_aff}, $A = \mathsmaller{\bigoplus}_{k \in \Lambda}A|_{E_k(\kH)} = D ~\hat{+}~ N$, and it is clear from Lemma \ref{lem:prod-m-summable} that $D~\hat{\cdot}~N = N~\hat{\cdot}~D$.

\vskip0.05in
 Let $A =  D' ~\hat{+}~N'$ also be another such decomposition. Since each $(\Pi|_{E_k})_\textrm{aff}$ is a homomorphism, note that $A|_{E_k(\kH)} = D|_{E_k(\kH)} ~\hat{+}~ N|_{E_k(\kH)}$, and $D|_{E_k(\kH)}$ and $N|_{E_k(\kH)}$ commute with each other. From Lemma \ref{lem:restrictions_normal,scalar-type,m-quasinil}, $D|_{E_k(\kH)}$ is of $\mathfrak{u}$-scalar-type, $N|_{E_k(\kH)}$ is $\mathfrak{m}$-quasinilpotent. By Proposition \ref{prop:main2}-(iii), $N|_{E_k(\kH)}$ is nilpotent. Then from the uniqueness clause in Proposition \ref{prop:main2}-(i), $D|_{E_k(\kH)} = D'|_{E_k(\kH)}, N|_{E_k(\kH)} = N'|_{E_k(\kH)}$ for every $k \in \N$. Using Proposition \ref{prop:dir_sum_aff}, we conclude that $D=D'$ and $N=N'$, which proves the uniqueness of the decomposition.

\vskip0.1in

\noindent (ii) Note that $A = \bigoplus_{k \in \Lambda} A_k$ where $A_k := A|_{E_k(\kH)}$ is an operator in $\textrm{Aff}(\kM|_{E_k(\kH)})$. Since $\kM|_{E_k(\kH)}$ is a type $I_k$ von Neumann algebra, by Proposition \ref{prop:main2}-(ii), the normalized power sequence of $A_k$ converges in the $\mathfrak{m}$-topology to a positive operator in $\textrm{Aff}(\kM|_{E_k(\kH)})$. By Lemma \ref{lem:dir-um:normal,scalar-type,m-quasinil}-(v), the normalized power sequence of $A$ converges in the $\mathfrak{m}$-topology to a positive operator in $\textrm{Aff}(\kM)$.
\end{proof}

\begin{definition}
For a type $I$ finite von Neumann algebra $\kM$,  we call the decomposition of an operator $A$ in $\textrm{Aff}(\kM)$ as described in Theorem \ref{thm:main2} as the {\it Jordan-Chevalley-Dunford decomposition} of $A$.
\end{definition}

\begin{cor}
\label{cor:functoriality_JCD}
\textsl{
Let $\kM, \kM'$ be type $I$ finite von Neumann algebras, $\Phi : \kM \to \kM '$ be a unital normal $*$-homomorphism, and  $\Phi_{\textrm{aff}} : \textrm{Aff}(\kM) \to \textrm{Aff}(\kM ')$ be the extension of $\Phi$, as given in Definition \ref{def:Phi_aff}. For $A \in \textrm{Aff}(\kM)$, let $A = D ~\hat{+}~ N$ be the Jordan-Chevalley-Dunford decomposition of $A$. Then $\Phi_{\textrm{aff}}(A) =  \Phi_{\textrm{aff}}(D) ~\hat{+}~ \Phi_{\textrm{aff}} (N)$, is the Jordan-Chevalley-Dunford decomposition of $\Phi_{\textrm{aff}}(A)$.
}
\end{cor}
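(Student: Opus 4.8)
The plan is to reduce everything to the uniqueness clause of Theorem~\ref{thm:main2}-(i). Since $\kM'$ is again a type $I$ finite von Neumann algebra and $\Phi_{\textrm{aff}}(A) \in \textrm{Aff}(\kM')$ (by Definition~\ref{def:Phi_aff}), the operator $\Phi_{\textrm{aff}}(A)$ has exactly one Jordan-Chevalley-Dunford decomposition; hence it suffices to check that $\Phi_{\textrm{aff}}(A) = \Phi_{\textrm{aff}}(D) \hat{+} \Phi_{\textrm{aff}}(N)$ realizes such a decomposition, that is, $\{\Phi_{\textrm{aff}}(D), \Phi_{\textrm{aff}}(N)\}$ is a commuting pair consisting of a $\mathfrak{u}$-scalar-type operator and an $\mathfrak{m}$-quasinilpotent operator summing to $\Phi_{\textrm{aff}}(A)$.

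First, the purely algebraic parts: since $\Phi_{\textrm{aff}}$ is a unital $*$-homomorphism preserving $\hat{+}$ and $\hat{\cdot}$ (Definition~\ref{def:Phi_aff}), applying it to $A = D \hat{+} N$ and to $D \hat{\cdot} N = N \hat{\cdot} D$ gives $\Phi_{\textrm{aff}}(A) = \Phi_{\textrm{aff}}(D) \hat{+} \Phi_{\textrm{aff}}(N)$ and $\Phi_{\textrm{aff}}(D) \hat{\cdot} \Phi_{\textrm{aff}}(N) = \Phi_{\textrm{aff}}(N) \hat{\cdot} \Phi_{\textrm{aff}}(D)$. It remains to transport the two structural properties across $\Phi_{\textrm{aff}}$, and here the argument is identical to the one used for the restriction homomorphism in Lemma~\ref{lem:restrictions_normal,scalar-type,m-quasinil}. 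Writing $D = S^{-1} \hat{\cdot} T \hat{\cdot} S$ with $S$ invertible and $T$ normal in $\textrm{Aff}(\kM)$, Theorem~5.5 together with Theorem~4.14-(iii) of \cite{ghosh_nayak} shows that $\Phi_{\textrm{aff}}(S)$ is invertible with $\Phi_{\textrm{aff}}(S)^{-1} = \Phi_{\textrm{aff}}(S^{-1})$, and Theorem~5.5 together with Theorem~6.6-(vi) of \cite{ghosh_nayak} shows that $\Phi_{\textrm{aff}}(T)$ is normal; consequently $\Phi_{\textrm{aff}}(D) = \Phi_{\textrm{aff}}(S)^{-1} \hat{\cdot} \Phi_{\textrm{aff}}(T) \hat{\cdot} \Phi_{\textrm{aff}}(S)$ is $\mathfrak{u}$-scalar-type. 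For the nilpotent-type part, $\mlim_{k \in \N} |N^k|^{\frac{1}{k}} = 0_{\kM}$, so Lemma~\ref{lem:norm_pow_func}-(ii) gives $\mlim_{k \in \N} |\Phi_{\textrm{aff}}(N)^k|^{\frac{1}{k}} = \Phi_{\textrm{aff}}(0_{\kM}) = 0_{\kM'}$, i.e.\ $\Phi_{\textrm{aff}}(N)$ is $\mathfrak{m}$-quasinilpotent. Uniqueness in Theorem~\ref{thm:main2}-(i) then identifies $\Phi_{\textrm{aff}}(D) \hat{+} \Phi_{\textrm{aff}}(N)$ as \emph{the} Jordan-Chevalley-Dunford decomposition of $\Phi_{\textrm{aff}}(A)$.

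I do not anticipate a genuine obstacle: the corollary is a formal consequence of (a) the uniqueness half of Theorem~\ref{thm:main2} and (b) the fact that an $\mathfrak{m}$-continuous unital normal $*$-homomorphism preserves invertibility, normality, and --- through the functoriality of the Borel functional calculus exploited in Lemma~\ref{lem:norm_pow_func} --- the $\mathfrak{m}$-limit of a normalized power sequence. The only thing demanding attention is the bookkeeping of which previously established result certifies each of these three stabilities, all of which have already been isolated in Lemma~\ref{lem:norm_pow_func} and in the cited portions of \cite{ghosh_nayak}.
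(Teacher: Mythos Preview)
Your proof is correct and follows essentially the same approach as the paper: reduce to the uniqueness clause of Theorem~\ref{thm:main2}-(i), then verify that $\Phi_{\textrm{aff}}$ preserves the sum, commutation, $\mathfrak{u}$-scalar-type property (via \cite[Theorems 4.14-(iii), 6.6-(vi)]{ghosh_nayak}), and $\mathfrak{m}$-quasinilpotence (via the $\mathfrak{m}$-continuity of $\Phi_{\textrm{aff}}$ together with the functional calculus identity $\Phi_{\textrm{aff}}(|N^m|^{1/m}) = |\Phi_{\textrm{aff}}(N)^m|^{1/m}$). The only cosmetic difference is that you package the last step as an appeal to Lemma~\ref{lem:norm_pow_func}-(ii), whereas the paper spells out the same argument inline.
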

\begin{proof}
Note that $D$ is $\mathfrak{u}$-scalar-type, $N$ is $\mathfrak{m}$-quasinilpotent, and $D ~\hat{\cdot}~N = N ~\hat{\cdot}~D$. Since $\Phi_{\textrm{aff}}$ is an $\mathfrak{m}$-continuous $*$-homomorphism, clearly we have,
\[
\Phi_{\textrm{aff}}(A) = \Phi_{\textrm{aff}}(D~\hat{+}~N) =  \Phi_{\textrm{aff}}(D) ~\hat{+}~ \Phi_{\textrm{aff}} (N).
\]
From the uniqueness of the Jordan-Chevalley-Dunford decomposition of $\Phi_{\textrm{aff}}(A)$ in $\textrm{Aff}(\kM ')$, it suffices to show that $\Phi_{\textrm{aff}}(D)$ is a $\mathfrak{u}$-scalar-type operator, $\Phi_{\textrm{aff}}(N)$ is an $\mathfrak{m}$-quasinilpotent operator, and that $\Phi_{\textrm{aff}}(D)$ and $\Phi_{\textrm{aff}}(N)$ commute with each other.

Since $D$ is $\mathfrak{u}$-scalar-type, there is a normal operator $T$ and an invertible operator $S$ in $\textrm{Aff}(\kM)$ such that $D = S ~\hat{\cdot}~ T ~\hat{\cdot}~ S^{-1}$. From \cite[Theorem 4.14-(iii)]{ghosh_nayak} and \cite[Theorem 6.6-(vi)]{ghosh_nayak}, it follows that $\Phi_{\textrm{aff}}(S)$ is an invertible operator in $\textrm{Aff}(\kM')$ with $\Phi_{\textrm{aff}}(S)^{-1} = \Phi_{\textrm{aff}}(S^{-1})$, and $\Phi_{\textrm{aff}}(T)$ is a normal operator in $\textrm{Aff}(\kM')$. Thus, $\Phi_{\textrm{aff}}(D) = \Phi_{\textrm{aff}}(S)~\hat{.}~\Phi_{\textrm{aff}}(T)~\hat{.}~\Phi_{\textrm{aff}}(S)^{-1}$ is $\mathfrak{u}$-scalar-type.

Since $|N^n|^{\frac{1}{n}} \to 0_{\kM}$ in the $\mathfrak{m}$-topology, by the $\mathfrak{m}$-continuity of $\Phi_{\textrm{aff}}$, we conclude that $\Phi_{\textrm{aff}}(|N^n|^{\frac{1}{n}}) \to 0_{\kM'}$ in the $\mathfrak{m}$-topology. Using \cite[Corollary 6.11]{nayak_MvN_alg}, for every $m \in \N$, we have $\Phi_{\textrm{aff}}(|N^m|^{\frac{1}{m}}) = |\Phi_{\textrm{aff}}(N)^m|^{\frac{1}{m}}$. Thus, $\Phi_{\textrm{aff}}(N)$ is $\mathfrak{m}$-quasinilpotent. Since $D$ and $N$ commute, we have $\Phi_{\textrm{aff}}(D)~\hat{\cdot}~ \Phi_{\textrm{aff}}(N)= \Phi_{\textrm{aff}}(N)~\hat{\cdot}~ \Phi_{\textrm{aff}}(D)$.
\end{proof}

\begin{rem}
\label{rmrk:counter_ex_I3}
A standard faithful normal representation of the von Neumann algebra $\ell^{\infty}(\N)$ is on the Hilbert space $\ell^2(\N)$ via the multiplier action. Let $\kM_3$ denote the type $I_3$ von Neumann algebra, $M_3\big(\ell^{\infty}(\N)\big)$, acting on the Hilbert space $\kK := \ell^2(\N) ~\mathsmaller{\bigoplus}~ \ell^2(\N) ~\mathsmaller{\bigoplus}~ \ell^2(\N)$. Since $\ell ^{\infty}(\N)$ is $*$-isomorphic to $C(\beta \N)$, it follows from Proposition \ref{prop:counter} that there is an element of $\kM_3$ for which its $\mathfrak{u}$-scalar-type part and nilpotent part both lie in $\textrm{Aff}(\kM_3) \backslash \kM_3$, that is, they are densely-defined closed operators on $\kK$ that are not bounded.
\end{rem}

\begin{prop}
\label{prop:I_n_embed_II_1}
\textsl{
For $n \in \N$, let $\kM_n$ denote the type $I_n$ von Neumann algebra, $M_n\big(\ell ^{\infty}(\N)\big)$, and $\kL$ be a type $II_1$ von Neumann algebra. Then $\kM_n$ is $*$-isomorphic to a von Neumann subalgebra of $\kL$. 
}
\end{prop}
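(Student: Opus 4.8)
The plan is to exhibit $\kM_n = M_n\big(\ell^{\infty}(\N)\big)$ as the range of a unital normal $*$-monomorphism into $\kL$, built from a unital copy of $M_n(\C)$ in $\kL$ together with a commuting copy of $\ell^{\infty}(\N)$. First I would produce a unital system of $n\times n$ matrix units in $\kL$. Since $\kL$ is type $II_1$, it carries a faithful normal centre-valued trace $\tau$, and by the halving lemma together with the comparison theory for finite von Neumann algebras (see \cite[\S 6.5, \S 8.4]{KR-II}) there is a projection $E_1 \in \kL$ with $\tau(E_1) = \tfrac1n I_\kL$. Repeatedly applying comparison -- at each stage the complement of the projections chosen so far has centre-valued trace $\ge \tfrac1n I_\kL$, hence dominates $E_1$ -- produces mutually orthogonal projections $E_1, \dots, E_n$ with $\tau(E_i) = \tfrac1n I_\kL$ and $\sum_{i=1}^n E_i = I_\kL$ (the last one being the complement of the first $n-1$). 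Projections with equal centre-valued trace in a finite von Neumann algebra are equivalent, so choosing partial isometries $V_i \in \kL$ with $V_i^*V_i = E_1$ and $V_iV_i^* = E_i$ and setting $e_{ij} := V_iV_j^*$ yields matrix units $\{e_{ij}\}_{i,j=1}^n$ in $\kL$ with $\sum_i e_{ii} = I_\kL$.

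The standard ``$n\times n$ block'' identification then gives a normal $*$-isomorphism $\Psi : \kL \to M_n(\kN)$, where $\kN := e_{11}\kL e_{11}$ and $\Psi(x) = \big[\,e_{1i}\,x\,e_{j1}\,\big]_{i,j=1}^n$, so that $\Psi$ carries each $e_{ij}$ to the scalar matrix unit $E_{ij}$ over $\kN$. Here $\kN$ is a corner of $\kL$ by a projection whose central carrier is $I_\kL$ (the $E_i$ are equivalent and their join is $I_\kL$), hence $\kN$ is again a type $II_1$ von Neumann algebra. Being type $II_1$, $\kN$ has no minimal projections, so one may choose recursively a sequence $\{q_m\}_{m\in\N}$ of nonzero mutually orthogonal projections of $\kN$ with $\sum_{m\in\N} q_m = I_{\kN}$ (any leftover projection is absorbed into $q_1$). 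The map $\iota : \ell^{\infty}(\N) \to \kN$ sending $(\lambda_m)_{m\in\N}$ to the $\sigma$-weakly convergent sum $\sum_{m} \lambda_m q_m$ is then a unital normal $*$-monomorphism.

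Finally I would assemble these. Applying $\iota$ entrywise gives a unital normal $*$-monomorphism $\widehat{\iota} : M_n\big(\ell^{\infty}(\N)\big) \to M_n(\kN)$, and $\Psi^{-1} \circ \widehat{\iota} : \kM_n \to \kL$ is a unital normal $*$-monomorphism; its image is a von Neumann subalgebra of $\kL$ -- namely $\Psi^{-1}$ of the von Neumann subalgebra $M_n\big(\iota(\ell^{\infty}(\N))\big)$ of $M_n(\kN)$ -- which is exactly the asserted embedding.

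The main obstacle is the very first step: obtaining a unital copy of $M_n(\C)$ in $\kL$, equivalently a projection of centre-valued trace $\tfrac1n I_\kL$, equivalently an isomorphism $\kL \cong M_n(\kN)$. When $n$ is a power of $2$ this follows at once by iterating the halving lemma; for general $n$ it rests on the sharper fact that in a type $II_1$ von Neumann algebra every central element $Z$ with $0 \le Z \le I$ is of the form $\tau(P)$ for some projection $P$, applied here with $Z = \tfrac1n I_\kL$. Everything downstream of that is routine bookkeeping with matrix units, corners, and direct sums.
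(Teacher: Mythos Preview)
Your proposal is correct and follows essentially the same strategy as the paper: decompose $\kL \cong M_n(\kN)$ via a system of $n$ equivalent orthogonal projections summing to $I_\kL$, embed $\ell^\infty(\N)$ into the type $II_1$ corner $\kN$ via a sequence of orthogonal projections, and amplify entrywise. The only notable difference is that the paper simply cites \cite[Lemma 6.5.6]{KR-II} to obtain both the $n$ equivalent projections and the countable orthogonal family, so the ``obstacle'' you flag (existence of a projection with centre-valued trace $\tfrac1n I_\kL$ for arbitrary $n$) is already packaged in that reference and need not be argued separately.
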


\begin{proof}
Without loss of generality, we may assume that $\kL$ is a represented von Neumann algebra acting on the Hilbert space $\kH$. We first prove the assertion for $n=1$. Using \cite[Lemma 6.5.6]{KR-II}, we may inductively choose a sequence $\{E_k\}_{k \in \N}$ of mutually orthogonal projections in $\kL$ whose sum converges to $I_\kL$ in SOT.
Define the map,
$$\phi : \ell^\infty(\N) \to \kL \textnormal{ given by } 
\alpha \mapsto \sum_{k \in \N} \alpha(k) E_k.$$
It is straightforward to verify that $\phi$ is a unital injective $*$-homomorphism. In what follows, we show that $\phi$ is normal. Let $\{ \alpha_i \}_{i \in I}$ be an increasing net in $\ell^\infty(\N)^{+}$ with least upper bound $\alpha \in \ell^\infty(\N)^{+}$. Clearly, for every $k \in \N$, $\alpha_i(k) \uparrow \alpha(k)$, and as $E_k$ is a projection, we have $\alpha_i(k) E_k \uparrow \alpha(k) E_k$ in SOT. Since the $E_k$'s are mutually orthogonal and $\sup_i \|\alpha_i\|_\infty \le \|\alpha\|_{\infty} < \infty$, it is easy to verify using Parseval's identity ($\|x\|^2 = \sum_{k\in \N} \|E_k x\|^2$ for $x \in \kH$), that $\sum_{k \in \N} \alpha_i(k) E_k \uparrow \sum_{k \in \N} \alpha(k) E_k$ in SOT. In other words, $\phi(\alpha_i) \uparrow  \phi(\alpha)$ in SOT. This shows that $\phi$ is a unital normal embedding of $\kM_1 = \ell^\infty(\N)$ into $\kL$. 

We next prove the result for general $n \in \N$. By \cite[Lemma 6.5.6]{KR-II}, there are mutually orthogonal Murray-von Neumann equivalent projections $E_1, E_2, \ldots, E_n \in \kL$ such that $$E_1 + E_2 + \cdots + E_{n} = I_{\kL}.$$ Using \cite[Lemma 6.6.3-6.6.4]{KR-II}, we observe that $\kL$ is $*$-isomorphic to $M_n(E_1 \kL E_1)$. Note from \cite[Exercise 6.9.16]{KR-II} that $\kL' := E_1 \kL E_1$ is a type $II_1$ von Neumann algebra acting on the Hilbert space $E_1(\kH)$. From the case of $n=1$, we have a unital normal embedding $\phi : \ell^{\infty}(\N) \to \kL'$. It follows from \cite[Theorem 11.2.9-11.2.10]{KR-II} that 
$$\phi \otimes \mathrm{id}_n : M_n\big(\ell^{\infty}(\N)\big) \to M_n(\kL') \cong \kL,$$ is a unital normal embedding of $M_n\big(\ell^{\infty}(\N)\big)$ into $\kL$. 
\end{proof}

\begin{rem}
\label{rmrk:essential_aff}
Let $\kL$ be a type $II_1$ von Neumann algebra. In light of Corollary \ref{cor:functoriality_JCD}, Proposition \ref{prop:I_n_embed_II_1}, and the example in Remark \ref{rmrk:counter_ex_I3}, it appears that any meaningful notion of Jordan-Chevalley-Dunford decomposition for operators in $\kL$ would involve operators in $\textrm{Aff}(\kL)$, that do not necessarily lie in $\kL$.
\end{rem}

\noindent Inspired by Theorem \ref{thm:main2}, we end our discussion with two conjectures about operators affiliated with a type $II_1$ von Neumann algebra $\kL$.
\vskip 0.05in
\noindent {\bf Conjecture 1:}. Every operator $A$ in $\textrm{Aff}(\kL)$ can be uniquely decomposed as $A = D ~\hat{+}~ N$, where $D$ is a $\mathfrak{u}$-scalar-type operator in $\textrm{Aff}(\kL)$ and $N$ is a $\mathfrak{m}$-quasinilpotent operator in $\textrm{Aff}(\kL)$ such that $D$ and $N$ commute.
\vskip 0.05in
\noindent {\bf Conjecture 2:} The normalized power sequence of every operator $A$ in $\textrm{Aff}(\kL)$ converges in the $\mathfrak{m}$-topology.


\bibliographystyle{amsalpha}
\bibliography{references}


\end{document}